\newtheorem{theorem}{Theorem}
\newtheorem{lemma}{Lemma}
\newtheorem{definition}{Definition}
\newtheorem{corollary}{Corollary}
\newtheorem{proposition}[theorem]{Proposition}
\newtheorem{remark}{Remark}
\newtheorem{example}{Example}
\numberwithin{equation}{section}
\newtheorem*{proof*}{Proof}
\newcommand{\hf}{\hspace{0.5em}}
\begin{document}
	
	\title[Harish-Chandra Theorem for Okado-Yamane Quantum Groups] {Harish-Chandra Theorem for the Multi-Parameter \\ Quantum Groups of Okado-Yamane Type}

	\author{Kaixiang \textsc{Chen}}
	\address{School of Mathematical Sciences, \\
		East China Normal University, \\
		Shanghai 200241, China}
	\email{51265500019@stu.ecnu.edu.cn}
	
	\author{Naihong \textsc{Hu}}
	\address{School of Mathematical Sciences,
		\\ MOE Key Laboratory of Mathematics and Engineering
		\\ Applications \& Shanghai Key Laboratory of PMMP,
		\\ East China Normal University,
		\\ Shanghai 200241, China}
	\email{nhhu@math.ecnu.edu.cn}
	
	\author{Hengyi \textsc{Wang}}
	\address{School of Mathematical Sciences,
		\\ East China Normal University,
		\\ Shanghai 200241, China}
	\email{52265500001@stu.ecnu.edu.cn}
	
	\subjclass[2020]{Primary 17B37, 81R50; Secondary 17B35}
	
	\keywords{Okado-Yamane quantum groups, Centre, Harish-Chandra homomorphism, Rosso form}% Key word(s)

	\thanks{This work is supported by the NNSF of China (Grant No. 12171155), and in part by the Science and Technology Commission of Shanghai Municipality (Grant No. 22DZ2229014).}

	\subjclass{Primary 17B37, 81R50; Secondary 17B35}
	\date{2026.03.20}

	\begin{abstract}
		This paper is devoted to studying the centre of the multi-parameter quantum group $U_{q,G}(\mathfrak{g})$ introduced by Okado and Yamane, where $\mathfrak{g}$ is a complex simple Lie algebra, and all parameters lie in general position. We mainly establish the Harish-Chandra theorem, proving that the Harish-Chandra homomorphism is an isomorphism; in particular, we determine the centre $Z(U_{q,G})\cong (U^0_\flat)^W$ is isomorphic to a polynomial algebra or a quotient algebra of a polynomial algebra. The same result holds for the $(U^0_\flat)^W$ of the two-parameter quantum group $U_{r,s}(\mathfrak{g})$.
	\end{abstract}
	
	\maketitle

	\section{Introduction.}
	In mathematics and theoretical physics, ``quantum group'' $U_q(\mathfrak{g})$ refers to a class of Hopf algebras that are neither commutative nor cocommutative. One of the most well-known and widely studied classes of quantum groups is the Drinfeld-Jimbo presentation, which arises as a $q$-deformation of the universal enveloping algebra $U(\mathfrak{g})$ of a complex simple Lie algebra $\mathfrak{g}$. This construction was independently introduced by Drinfeld \cite{D86} and Jimbo \cite{J86} in the mid of 1980s, and has played a fundamental role in the development of quantum integrable systems, representation theory, and the theory of braid group actions etc. (see \cite{CP94, D86, J96, K95, L93}).
	In 1990, Okado and Yamane \cite{OY90} introduced a class of multi-parameter quantum groups $U_{q,G}(\mathfrak{g})$, defined via the parameter matrix $G=(q_{ij})$ and modified commutation relations, and investigated the basic $R$-matrix of $U_{q,G}(\mathfrak{\hat{\mathfrak{sl}}}_{n})$. Subsequently, Hayashi \cite{H92} showed that it admits a unique bilinear pairing and similar highest weight representations. In fact, $U_{q,G}(\mathfrak{g})$ can be realized as a specialization of the general multi-parameter quantum groups constructed in the second author's joint work with Pei and Rosso \cite{PHR10} under the appropriate parameter constraints.
	
	Several works on the centre of quantum groups have been developed over the last three decades. In 1990, Rosso \cite{R90} defined a significant $ad$-invariant bilinear form on $U_q(\mathfrak{g})$ with a generic $q$, where $\mathfrak{g}$ is a finite-dimensional simple Lie algebra. This form, often referred to as the  {Rosso form} or  {quantum Killing form}, has paved the way for the theorem of quantum Harish-Chandra isomorphism (later completed by Tanisaki \cite{T90, T92} and Joseph-Letzter \cite{JL92} etc. through different approaches).
	The centre $Z(U_q)$ is a polynomial algebra for the most of types and a quotient thereof in the remaining types, as studied by Li-Xia-Zhang \cite{LXZ16, LXZ18}.
	For quantum groups of weight lattice types $\breve{U}_q(\mathfrak{g})$, their centres are all isomorphic to polynomial algebras. (First proposed by Etingof \cite{E95}; a complete proof with explicit central element construction was later provided by Dai \cite{D23}, building upon the work of R. B. Zhang et al. \cite{GZB91, ZGB91}).
	In the broader context of quantum doubles of the bosonization of Nichols algebras of (finite) diagonal type, including multiparameter quantum groups, quantum superalgebras, and Lusztig's small quantum groups with free Cartan part, Batra and Yamane \cite{BY15, BY18, BY20} established the general Harish-Chandra-type theorem for (skew) centres and proposed a conjectural basis for these algebras.
	Their work provides a powerful, unified framework: \cite[Thm.~10.4]{BY18} characterizes the image of the Harish-Chandra homomorphism as an abstract subalgebra $\mathfrak{B}^{\chi,\pi}_\omega$ defined by a system of equations, while \cite[Conj.~3.13]{BY20} proposes that the centre should admit a natural basis parameterized by finite-dimensional weight modules. However, for concrete families of quantum groups, the explicit structure of the centre and the verification of this conjectural basis often require further investigation.
	
	Recently, Hu and Wang \cite{HW25} established the Harish-Chandra theorem for two-parameter quantum groups $U_{r,s}(\mathfrak{g})$ with detailed centre structure in the case when two-parameter $r, s$ are generic and central elements, and the open question in the odd-rank case solved by Xia \cite{X25} (with the modified relations in his definition of two-parameter quantum groups):
	\begin{proposition} \cite{HW25, X25} \label{HW25}
		Let $n= \operatorname{rank} (\mathfrak{g})$, parameters $r$ and $s$ be in general position, $U=U_{r,s}(\mathfrak{g})$, and $\breve{U}=\breve{U}_{r,s}(\mathfrak{g})$ be the weight lattice type of $U$, then we have the following Harish-Chandra theorem:
		\begin{enumerate}
			\item The Harish-Chandra homomorphism $\xi:\; Z(U) \rightarrow U^0$ is injective. The image $\xi(Z(U)) = (U^0_\flat)^W$ in the case when $n$ is even, and $\xi (Z(U)) \supseteq (U^0_\flat)^W \otimes \mathbb{K}[z_{*},z_{*}^{-1}]$ in the case when $n$ is odd.
			
			\item The Harish-Chandra homomorphism $\breve{\xi}:\; Z(\breve{U}) \rightarrow \breve{U}^0$ is injective. The image
			\begin{equation*}
				\xi(Z(\breve{U})) =
				\begin{cases}	
					(\breve{U}^0_\flat)^W \hspace{2.1cm} = \mathbb{K}[z_{\varpi_1},\cdots,z_{\varpi_n}], &\textnormal{if $n$ is even,}\\
					(\breve{U}^0_\flat)^W \otimes \mathbb{K}[z_*^{\frac{1}{\ell}}, z_*^{-\frac{1}{\ell}}] = \mathbb{K}[z_{\varpi_1},\cdots,z_{\varpi_n}] \otimes \mathbb{K}[z_*^{\frac{1}{\ell}}, z_*^{-\frac{1}{\ell}}],
					&\textnormal{if $n$ is odd,}
				\end{cases}
			\end{equation*}
			where $\varpi_i$ is the $i$-th fundamental weight (shown in Appendix A), central element $z_\lambda$ is obtained by the Rosso form realization of quantum trace on weight module $L(\lambda)$; $z_*$ is an additional invertible central generator (degenerating to the $1$ in the one-parameter specialization), and $\ell=2$, except $\ell=4$ for $D_{2k+1}$.
		\end{enumerate}
	\end{proposition}
	
	Inspired by the above-mentioned works, we investigate the centre of the quantum group $U=U_{q,G}(\mathfrak{g})$ ($n\geqslant2$, otherwise it is $U_{q,q^{-1}} (\mathfrak{sl}_2)$), under the simplest assumption that all parameters in $ \{q\} \cup \{q_{ij} \mid i < j\} $ are algebraically independent, and prove that the Harish-Chandra homomorphism $\xi:Z(U) \rightarrow U^0$ is injective, and the image $\operatorname{Im}(\xi) = (U^0_\flat)^W$ for all types. The key difference in our approach arises from the fact that $ U_{q,G}(\mathfrak{g}) $ involves more algebraically independent parameters than $ U_{r,s}(\mathfrak{g}) $. These parameters induce stronger conditions (see Lemma \ref{nondeg of rho^{-,-}}), which are essential in guaranteeing $\operatorname{Im}(\xi)\subseteq (U^0_\flat)^W$ for all types.
	The isomorphism $\xi: Z(U) \rightarrow (U_\flat^0)^W$ indicates $Z(U)=\operatorname{span}_{\mathbb{K}}\{z_\lambda\;\big|\;\lambda\in\Psi:= \Lambda^+ \cap Q\}$, which confirms that the Batra-Yamane conjecture \cite[Conj.~3.13]{BY20} holds for $Z(U_{q,G})$ under the given conditions (see Corollary \ref{conj}).
	
	Although the set $\{z_\lambda \mid \lambda \in \Psi_{\min}\}$ generates $Z(U)$, where $\Psi_{\min}$ is the minimal generating set of the monoid $\Psi$, the algebraic relations among them depend on Steinberg's formula.
	To obtain a more direct formulation of $Z(U)$, motivated by Li-Xia-Zhang's work \cite{LXZ16} on $U_q(\mathfrak{g})$ (where their $\Psi':=\Lambda^+ \cap \frac{Q}{2}$), we shall construct an algebra monomorphism $\theta: (U^0_\flat)^W \rightarrow {\mathbb{K}[\mathfrak{h}^*]^W}$ and find another generating set of $\operatorname{Im}(\theta)$ whose multiplicative relations are completely determined by the additive relations among $\Psi_{\min}$ in $\Psi$.
	So we can describe the $Z(U)\cong(U^0_\flat)^W\cong \operatorname{Im}(\theta)$ by (a quotient of) a polynomial algebra with the explicit generators (and generating relations). Our main results can be summarized as follows:
	
	\begin{theorem} \label{main results}
		Let $U=U_{q,G}(\mathfrak{g})\; (n\geqslant 2)$, we have the following Harish-Chandra theorem:
		\begin{enumerate}
			\item The Harish-Chandra homomorphism $\xi:\; Z(U) \rightarrow U^0$ is injective. The image $\xi(Z(U)) = (U^0_\flat)^W$.
			
			\item The diagram
			\begin{equation*}
				\begin{tikzcd}[column sep=small]
					R(\mathfrak{g}) \otimes_\mathbb{Z} \mathbb{K} \arrow[d,"\textnormal{Ch}"', "\wr"] \arrow[r, dashed] & {Z(U_{q,G}(\mathfrak{g}))} \arrow[d, "\xi"', "\wr"] & {[L(\lambda)]} \arrow[r, "\lambda \in \Lambda^+\cap Q",dashed, maps to] \arrow[d, maps to] & z_\lambda \arrow[d, maps to]                                                                       \\
					{\mathbb{K}[\mathfrak{h}^*]^W} &
					(U^0_\flat)^W \arrow[l, "\theta"', hook']                    & {\sum\limits_{\mu \leqslant\lambda} \dim(L(\lambda)_\mu) \, e^\mu}  & {\sum\limits_{\mu \leqslant\lambda} \dim(L(\lambda)_\mu) \, \omega'_\mu \omega_{-\mu}} \arrow[l, maps to]
				\end{tikzcd}
			\end{equation*}
			
			commutes, where $R(\mathfrak{g})$ denotes the Green ring of $\mathfrak{g}$ , and the dashed arrow represents a partial map only defined on $[L(\lambda)],\; \lambda \in \Psi = \Lambda^+ \cap Q$.
			
			\item The centre $Z(U)\cong(U^0_\flat)^W$ is isomorphic to a polynomial algebra $S=\mathbb{K}[t_1, \cdots, t_n]$ when $\mathfrak{g}$ is of type
			$B_n$, $E_8$, $F_4$ or $G_2$, otherwise it is isomorphic to a quotient  $S/I$, where $S$ is a polynomial algebra of rank $|\Psi_{\min}|$, and $I$ is	 the relation ideal given in Theorem \ref{Last theorem}.
			
			\item With the same monoid $\Psi=\Lambda^+\cap Q$, the structure of $(U^0_\flat)^W$ in Proposition \ref{HW25}.1 for the two-parameter quantum group $U_{r,s}(\mathfrak{g})$ matches Theorem \ref{main results}.3 identically. That is,
			\begin{equation*}
				\xi(Z(U_{r,s}))
				\begin{cases}	
					= (U^0_\flat)^W   \hspace{1.9cm} \cong  S/I, &\textnormal{if $n$ is even,}\\
					\supseteq (U^0_\flat)^W \otimes \mathbb{K}[z_{*},z_{*}^{-1}] \cong S/I \otimes \mathbb{K}[z_{*},z_{*}^{-1}],
					&\textnormal{if $n$ is odd.}
				\end{cases}
			\end{equation*}
		\end{enumerate}
	\end{theorem}
	
	The paper is organized as follows.
	In Section 2, we present the definition of the quantum groups $U_{q,G}(\mathfrak{g})$, the Rosso form, and the weight module.
	In Section 3, we introduce the Harish-Chandra homomorphism $\xi$, prove its injectivity, and show that $\operatorname{Im}(\xi) \subseteq (U_\flat^0)^W$. In Section 4, we construct central elements $z_\lambda$ by realizing the quantum trace $t_\lambda$ on the weight module $L(\lambda)$ via the Rosso form. We prove that the images $\xi(z_\lambda)$ span the subalgebra $(U_\flat^0)^W$, thereby establishing $\operatorname{Im}(\xi) \supseteq (U_\flat^0)^W$. Consequently, the proof of Theorem \ref{main results}.1 is finished, and then $Z(U)$ has the basis $\{z_\lambda\;\big|\;\lambda\in\Psi\}$, which supports the Batra-Yamane conjecture in our case. In Section 5, we construct an algebra monomorphism $ \theta: (U^0_\flat)^W \rightarrow {\mathbb{K}[\mathfrak{h}^*]^W} $, making the diagram in Theorem \ref{main results}.2 commute. Then we provide minimal generating sets for both $\Psi$ and $\operatorname{Im}(\theta)$. Finally, we construct an epimorphism $\phi$ from a rank-$|\Psi_{\min}|$ polynomial algebra $S$ onto $\operatorname{Im}(\theta)$, and determine a generating set for the ideal $I = \ker(\phi)$. Then it turns out that $(U^0_\flat)^W\cong\operatorname{Im}(\theta)\cong S/I$, which completes the proofs of Theorems \ref{main results}.3 and \ref{main results}.4.
	
	\section{Preliminaries.}
	
	\subsection{Specialized multi-parameter quantum groups and the Rosso form}
	
	Let $I$ be a set of integers and $A=[a_{ij}]_{i,j\in I}$ be a symmetrizable Cartan matrix corresponding to a complex simple Lie algebra $\mathfrak{g}$. Let $\left\{ d_i \right\} _{i\in I}$ be positive integers such that $d_ia_{ij}=d_ja_{ji}$ and $\operatorname{gcd}\{d_i\}_{i\in I}=1$.
	Let $\mathbb{K}$ be an algebraically closed field of characteristic zero, $q\in \mathbb{K}^{\times}$, and $G=[q_{ij}]_{i,j\in I}$ be a matrix such that $q_{ij}\in \mathbb{K}^{\times},q_{ii}=1$ and $q_{ij}q_{ji}=1$. We assume that the parameters in the set $ \{q\} \cup \{q_{ij} \mid i < j\} $ are algebraically independent (certainly this excludes the root of unity cases).
	
	\begin{definition} \cite{H92,OY90}
		Let $U=U_{q,G}(\mathfrak{g})$ be the unital associative algebra over $\mathbb{K}$ generated by elements $e_i$, $f_i$, $\omega_i^{\pm 1}$, $\omega_i^{\prime\pm 1}$ satisfying:
		\begin{equation*}
			\begin{split}
				\textnormal{(A1)} & \hspace{1cm} \omega_i \omega_i^{-1} =
				\omega'_i {\omega'_i}^{-1} = 1,
				\quad \\
				& \hspace{1cm}
				\omega_i \omega_j = \omega_j \omega_i, \quad
				\omega'_i \omega'_j = \omega'_j \omega'_i, \quad
				\omega'_i \omega_j = \omega_j \omega'_i,\\
				\textnormal{(A2)} & \hspace{1cm}
				\omega_i e_j \omega_i^{-1} = q_i^{a_{ij}} q_{ij} e_j, \hspace{0.7cm}
				\omega'_i e_j {\omega'_i}^{-1} = q_j^{-a_{ji}} q_{ji}^{-1} e_j,\\
				& \hspace{1cm}
				\omega_i f_j \omega_i^{-1} = q_i^{-a_{ij}} q_{ij}^{-1} f_j, \quad
				\omega'_i f_j {\omega'_i}^{-1} = q_j^{a_{ji}} q_{ji} f_j,\\
				\textnormal{(A3)} & \hspace{1cm}
				e_i f_j - f_j e_i = \delta_{ij} \cdot \frac{\omega_i - \omega_i^{\prime}}{q_i - q_i^{-1}},\\
				\textnormal{(A4)} & \hspace{1cm}
				\sum_{n=0}^{1 - a_{ij}} (-q_{ij})^n
				\begin{bmatrix}
					1 - a_{ij} \\ n
				\end{bmatrix}_i
				e_i^{1 - a_{ij} - n} e_j e_i^n = 0, \quad (i \ne j),\\
				& \hspace{1cm}
				\sum_{n=0}^{1 - a_{ij}} (-q_{ij})^n
				\begin{bmatrix}
					1 - a_{ij} \\ n
				\end{bmatrix}_i
				f_i^{1 - a_{ij} - n} f_j f_i^n = 0, \quad (i \ne j),
			\end{split}
		\end{equation*}
		where $q_i=q^{d_i}$, $i\in I$, and $\left[ \begin{smallmatrix} m \\ n \end{smallmatrix} \right]_i$ is defined by
		\begin{equation*}
			\left[ \begin{matrix} m \\ n \end{matrix} \right]_i
			= \frac{[m]_i [m-1]_i \cdots [m - n + 1]_i}{[n]_i [n-1]_i \cdots [1]_i}, \qquad
			[m]_i = \frac{q_i^m - q_i^{-m}}{q_i - q_i^{-1}}.
		\end{equation*}
	\end{definition}
	
	The algebra $U=U_{q,G}$ constitutes a multi-parameter quantum enveloping algebra (see \cite{PHR10} for general definition), equipped with the Hopf algebra structure $(U, \Delta, \varepsilon, S)$ defined by
	\begin{equation*}
		\begin{split}
			\textnormal{(C1)} & \hspace{1cm}
			\Delta(\omega_i) = \omega_i \otimes \omega_i, \hspace{1.55cm}
			\Delta(\omega'_i) = \omega'_i \otimes \omega'_i,\\
			& \hspace{1cm}
			\Delta(e_i) = e_i \otimes 1 + \omega_i \otimes e_i, \quad
			\Delta(f_i) = f_i \otimes {\omega'_i}^{-1} + 1 \otimes f_i,\\
			\textnormal{(C2)} & \hspace{1cm}
			\varepsilon(\omega_i) = \varepsilon(\omega'_i) = 1, \hspace{2.15cm}
			\varepsilon(e_i) = \varepsilon(f_i) = 0,\\
			\textnormal{(S1)} & \hspace{1cm}
			S(\omega_i) = \omega_i^{-1}, \quad
			S(\omega'_i) = {\omega'_i}^{-1}, \quad
			S(e_i) = -\omega_i^{-1} e_i, \quad
			S(f_i) = -f_i \omega'_i.
		\end{split}
	\end{equation*}
	
	Refer to \cite[Thm.~20]{PHR10}, there exists a unique bilinear pairing $\left<-,-\right> $: $U^{\leqslant 0} \times U^{\geqslant 0}
	\rightarrow \mathbb{K}$ such that for all $x$, $x^{\prime}\in U^{\leqslant 0}$, $y$, $y^{\prime} \in U^{\geqslant 0}$, $\mu$, $\nu \in Q$, and $i$, $j \in I$
	\begin{equation*}
		\begin{split}
			& \langle y, xx' \rangle = \langle \Delta(y), x' \otimes x \rangle, \quad
			\langle yy', x \rangle = \langle y \otimes y', \Delta(x) \rangle, \\
			& \langle f_i, e_j \rangle = \delta_{ij} \cdot \frac{1}{q_i^{-1} - q_i}, \\
			& \langle \omega'_i, \omega_j \rangle = q_j^{a_{ji}} q_{ji}, \quad
			\langle \omega_i^{\prime \pm 1}, \omega_j^{-1} \rangle = \langle \omega_i^{\prime \pm 1}, \omega_j \rangle^{-1} = \langle \omega'_i, \omega_j \rangle^{\mp 1},
		\end{split}
	\end{equation*}
	for $1 \leqslant i,j \leqslant n$, and all other pairs of generators are $0$.
	
	\begin{theorem}\label{trangular decom}\cite[Cor.~22]{PHR10}
		The algebra $U=U_{q,G}(\mathfrak{g})$ has the triangular decomposition
		$U \cong  U_{q,G}  \left( \mathfrak{n} ^- \right) \otimes U^0\otimes U_{q,G}\left( \mathfrak{n} \right)$,
		abbreviated as $U=U^{-}U^{0}U^{+}$, where $\mathfrak{g}=\mathfrak{n} ^-\oplus \mathfrak{h} \oplus \mathfrak{n}$ is the semisimple Lie algebra corresponding to the Cartan matrix $A$.
	\end{theorem}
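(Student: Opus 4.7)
The plan is to prove the triangular decomposition via Drinfeld's quantum-double construction, following the template familiar from the Drinfeld--Jimbo case. Define the subalgebras $U^-:=\langle f_i\mid i\in I\rangle$, $U^0:=\langle \omega_i^{\pm1},{\omega'_i}^{\pm1}\mid i\in I\rangle$, and $U^+:=\langle e_i\mid i\in I\rangle$. Relations (A1) say that $U^0$ is a commutative Laurent polynomial algebra on the $\omega_i$ and $\omega'_i$; relations (A2) say that $U^0$ normalizes $U^\pm$ via purely scalar conjugation; and relation (A3) places each commutator $[e_i,f_j]$ inside $U^0$. A straightening argument on monomials in the generators, moving every $f$ to the left and every $e$ to the right using (A2)--(A3), then shows that the multiplication map
\[
m:\ U^- \otimes U^0 \otimes U^+ \longrightarrow U
\]
is surjective.

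For injectivity I would realize $U$ as a quantum double. Let $U^{\geqslant 0}$ and $U^{\leqslant 0}$ denote the subalgebras generated by $U^0$ together with the $e_i$'s and the $f_i$'s, respectively; the coproducts in (C1) make these Hopf subalgebras, graded by $\pm Q$ via $\deg(e_i)=\alpha_i$, $\deg(f_i)=-\alpha_i$. The Hopf skew pairing $\langle-,-\rangle\colon U^{\leqslant 0}\times U^{\geqslant 0}\to\mathbb{K}$ recalled in the excerpt is uniquely determined on generators and extends by the two pairing axioms to the whole space. A weight-graded computation shows that its radical is a Hopf ideal; the hypothesis that $q$ and the $q_{ij}$ (for $i<j$) are algebraically independent and not roots of unity prevents any accidental vanishing among the matrix entries $q_j^{a_{ji}}q_{ji}$, so the pairing is non-degenerate once the defining relations are imposed. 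Relations (A1)--(A3) then coincide exactly with the double cross-product relations for $D(U^{\geqslant 0},U^{\leqslant 0})$ with respect to this pairing, which yields a surjection $D(U^{\geqslant 0},U^{\leqslant 0})\twoheadrightarrow U$ inverting $m$ on underlying vector spaces once one identifies $U^{\geqslant 0}\cap U^{\leqslant 0}=U^0$.

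The main obstacle is the flatness of the multi-parameter Serre relations (A4): one must verify that the defining relations of $U^+$ (and symmetrically of $U^-$) are \emph{exactly} (A4), with no further relations forced by the pairing or by interaction with (A2)--(A3). In the uniparameter setting this is Lusztig's theorem, proved via an explicit PBW basis. In the present situation it is precisely the content of \cite[Thm.~20 and Cor.~22]{PHR10}, which I would invoke rather than re-derive; the algebraic-independence hypothesis on the $q_{ij}$ guarantees that the specialization arguments used there apply, and in particular that no specialization of parameters introduces extra relations among the ordered monomials. Once flatness of $U^\pm$ is in hand, the bigrading by $Q$ forces $m$ to be a vector-space isomorphism, completing the proof.
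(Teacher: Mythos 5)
The paper does not prove this statement at all: it is imported verbatim as \cite[Cor.~22]{PHR10}, so there is no in-paper argument to compare against. Your sketch follows the standard route by which such triangular decompositions are actually established (and the route taken in the cited reference): surjectivity of the multiplication map by straightening monomials using (A1)--(A3), and injectivity by realizing $U$ as a Drinfeld double of the two Borel halves glued along the skew Hopf pairing of Theorem~20 of \cite{PHR10}. That is the right strategy, and you correctly isolate the genuine difficulty, namely that the defining ideal of $U^{\pm}$ is exactly the quantum Serre ideal (A4) --- equivalently, that the radical of the pairing restricted to the free halves is generated by the Serre relations, the multi-parameter analogue of Lusztig's theorem, which in \cite{PHR10} is handled via the quantum shuffle realization.

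The one point you should repair is a circularity: you propose to ``invoke \cite[Thm.~20 and Cor.~22]{PHR10}'' to dispose of that main obstacle, but Cor.~22 \emph{is} the triangular decomposition you are proving. Citing Thm.~20 for the existence and uniqueness of the pairing is legitimate; citing Cor.~22 for the flatness of the Serre relations reduces your proof to the statement itself. Either present the argument honestly as the paper does --- a bare citation --- or supply the missing ingredient (non-degeneracy of $\langle-,-\rangle$ on each graded piece $U^-_{-\mu}\times U^+_{\mu}$ after imposing (A4), which does not follow merely from the algebraic independence of the entries $q_j^{a_{ji}}q_{ji}$; that independence only controls the Cartan part of the pairing). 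As written, the non-degeneracy claim in your second paragraph is asserted rather than proved, and it is precisely the content being outsourced.
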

	
	The algebra $U$ is $Q$-graded given by
	\begin{equation*}
		\deg e_i = \alpha_i, \quad \deg f_i = -\alpha_i, \quad \deg \omega_i = \deg \omega_i^\prime = 0,
	\end{equation*}
	it has $Q^{+}$-graded subalgebras
	\begin{equation*}
		U^{\pm}
		= \bigoplus_{\mu \in Q^+} U_{\pm\mu}^\pm
		= \bigoplus_{\mu \in Q^+}
		\left\{ x \in U^{\pm} \;\middle|\;
		\begin{aligned}
			&\omega_{\eta} x \omega_{\eta}^{-1} = \langle \omega_{\pm\mu}^{\prime}, \omega_{\eta} \rangle x \\
			&\omega_{\eta}^{\prime} x \omega_{\eta}^{\prime -1} = \langle \omega_{\eta}^{\prime}, \omega_{\pm\mu} \rangle^{-1} x
		\end{aligned}
		,\; \eta \in Q
		\right\}.
	\end{equation*}
	One can also define a homomorphism
	$\varrho^{\lambda}:U^{0}\rightarrow \mathbb{K}$ for each $\lambda=\sum\nolimits_{i=1}^n {\lambda _i\alpha _i} \in \Lambda$
	satisfying
	\begin{equation*}
		\begin{split}
			\varrho^{\lambda}(\omega_j)
			&= \prod_{i=1}^n \left\langle \omega_i', \omega_j \right\rangle^{\lambda_i}
			= \prod_{i=1}^n \left( q_j^{a_{ji}} q_{ji} \right)^{\lambda_i},\\
			\varrho ^{\lambda}\left( \omega _{j}^{\prime} \right)
			&= \prod_{i=1}^n{\left< \omega _{j}^{\prime},\omega _{i}\right>}^{-\lambda_i}=\prod_{i=1}^n{\left( q_{i}^{a_{ij}}q_{ij} \right) ^{-\lambda_i}}.
		\end{split}
	\end{equation*}
	
	The following lemma is the key in proving the injectivity of the Harish-Chandra homomorphism.
	\begin{lemma} \label{iff zero}
		Let $\rho$ be the half sum of positive
		roots. Given any $\eta$, $\phi$ $\in Q$ and any integer $N$, we have
		$\varrho^{\lambda}(\omega _{\eta}^{\prime}\omega _{\phi})=1$ for any $\lambda \in \Lambda^{+} $ with $\lambda \geqslant$ $N\rho$, if and only if $(\eta,\phi)=(0,0)$.
	\end{lemma}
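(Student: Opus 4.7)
Writing $\eta = \sum_j \eta_j \alpha_j$, $\phi = \sum_j \phi_j \alpha_j$, and $\lambda = \sum_i \lambda_i \alpha_i$, the plan is to compute $\varrho^\lambda(\omega'_\eta \omega_\phi)$ explicitly as a product of powers of $q$ and the $q_{ij}$, and then exploit the algebraic independence of these parameters. The reverse direction is immediate, so I focus on the forward one. Using multiplicativity of the character $\varrho^\lambda$ together with the single-generator formulas for $\varrho^\lambda(\omega_j)$ and $\varrho^\lambda(\omega'_j)$ given just before the lemma, and then applying $d_i a_{ij} = d_j a_{ji}$ (so that $q_i^{a_{ij}} = q_j^{a_{ji}} = q^{(\alpha_i,\alpha_j)}$) and $q_{ij} q_{ji} = 1$ to the resulting double product, pairing contributions from $(i,j)$ with $(j,i)$, should reduce it to
\begin{equation*}
\varrho^\lambda(\omega'_\eta \omega_\phi) \;=\; q^{(\lambda,\,\phi-\eta)} \prod_{i<j} q_{ij}^{\lambda_j(\phi_i+\eta_i)\,-\,\lambda_i(\phi_j+\eta_j)},
\end{equation*}
where $(-,-)$ is the $W$-invariant form with $(\alpha_i,\alpha_j)=d_i a_{ij}$.

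With this formula in hand, the algebraic independence and non-torsion of $\{q\}\cup\{q_{ij}:i<j\}$ force both exponents in $\varrho^\lambda(\omega'_\eta \omega_\phi)=1$ to vanish, yielding $(\lambda,\phi-\eta)=0$ and $\lambda_j(\phi_i+\eta_i)=\lambda_i(\phi_j+\eta_j)$ for every $i<j$. These are linear conditions in the $\alpha$-coordinates of $\lambda$ and must hold on $S:=\{\lambda\in\Lambda^+:\lambda\geqslant N\varrho\}$. Since $S$ is Zariski dense in $\mathfrak{h}^*_{\mathbb{Q}}$ --- for $M$ sufficiently large, $S$ contains a full-dimensional translate of $Q^+$ --- the first condition forces the linear functional $(-,\phi-\eta)$ to vanish identically, so $\phi=\eta$ by non-degeneracy of $(-,-)$. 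Substituting $\phi=\eta$ into the second condition collapses it to $\phi_i\lambda_j=\phi_j\lambda_i$, and the same density argument then gives $\phi_i=\phi_j=0$ for every pair $i<j$, whence $\phi=\eta=0$.

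The main obstacle I anticipate is the algebraic bookkeeping that produces the displayed formula: one has to split the double product over $(i,j)$ into its diagonal, $i<j$, and $i>j$ pieces, use $q_{ji}=q_{ij}^{-1}$ to combine the last two, and keep all exponents straight. This is precisely where the extra parameters $q_{ij}$ of the multi-parameter setting produce the new $\prod_{i<j}$ factor --- the source of the ``stronger conditions'' highlighted in the introduction --- and it is invisible in the single-parameter case. Once the formula is established, extracting $\eta=\phi=0$ via algebraic independence and Zariski density is routine.
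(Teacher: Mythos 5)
Your proposal is correct and follows essentially the same route as the paper's own proof: expand $\varrho^{\lambda}(\omega'_{\eta}\omega_{\phi})$ into a monomial in $q$ and the $q_{ij}$ ($i<j$), invoke algebraic independence to force both families of exponents to vanish, and read off $\phi-\eta=0$ from the $q$-exponent and $\phi+\eta=0$ from the $q_{ij}$-exponents; in fact you supply the final linear-algebra step that the paper compresses into ``which easily leads to $(\eta,\phi)=(0,0)$.'' The one caveat --- shared with the paper, so not a defect of your write-up --- is that the concluding step needs $n\geqslant 2$, since for rank one there are no pairs $i<j$ and indeed $\varrho^{\lambda}(\omega'_{1}\omega_{1})=1$ identically, so $\eta=\phi=\alpha_1$ violates the statement; both arguments implicitly assume the off-diagonal parameters exist.
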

	\begin{proof}
		It suffices to prove the necessity. Fix elements $\eta, \phi \in Q$ and $\lambda \in \Lambda^{+}$, viewing all three as column vectors in the basis $\left\{ \alpha _i \right\} _{i=1}^{n}$, one has
		\begin{equation*}
			\begin{split}
				\varrho^{\lambda}\left( \omega_{\phi} \right)
				&= \prod_{j=1}^n \left( \varrho^{\lambda}(\omega_j) \right)^{\phi_j} \hspace{0.3cm}
				= \prod_{i,j=1}^n \left( q_j^{a_{ji}} q_{ji} \right)^{\lambda_i \phi_j} \\
				&= q^{\sum_{i,j} \phi_j d_j a_{ji} \lambda_i} \left( \prod_{i,j=1}^n q_{ji}^{\lambda_i \phi_j}  \right)
				= q^{\phi^T D A \lambda} \left(\prod_{i,j=1}^n q_{ij}^{-\lambda_i \phi_j} \right).
			\end{split}
		\end{equation*}
		Similarly, one has
		$\varrho ^{\lambda}\left( \omega _{\eta}^{\prime} \right)
		= q^{-\eta^T DA\lambda} \left( {\prod_{i,j}{q_{ij}^{-\lambda_i\eta_j}}} \right) $.
		Hence, the condition
		$\varrho ^{\lambda}\left( \omega _{\eta}^{\prime}\omega _{\phi} \right)$ $=1$ is equivalent to
		$\left( \phi-\eta \right)^T DA\lambda =0$, and $-\lambda _i\left( \eta +\phi  \right)_j +\lambda _j\left( \eta +\phi \right)_i=0 $
		for all $\lambda \in \Lambda^{+}$, $\lambda \geqslant N\rho$,
		which easily leads to $(\eta,\phi)=(0,0)$.
	\end{proof}

	The proof of $\xi(Z(U)) \supseteq (U^0_\flat)^W$ for the Harish-Chandra theorem requires central elements constructed via the following quantum Killing form.
	\begin{definition}
		The bilinear form $\langle -
		\mid-\rangle $: $U \times U \rightarrow \mathbb{K}$ defined by
		\begin{equation*}
			\begin{split}
				\left\langle (y_1 \omega_{\nu_1}^\prime) \, \omega_{\eta_1}^\prime \omega_{\phi_1} x_1 \mid (y_2 {\omega_{\nu_2}^\prime}^{-1}) \, \omega_{\eta_2}^\prime \omega_{\phi_2} x_2 \right\rangle & =
				\langle y_2, x_1 \rangle \langle \omega_{\eta_2}^\prime, \omega_{\phi_1} \rangle \langle \omega_{\eta_1}^\prime, \omega_{\phi_2} \rangle \langle S^2(y_1), x_2 \rangle \\
				& = q^{2(\rho, \nu_1)} \langle y_2, x_1 \rangle \langle \omega_{\eta_2}^\prime, \omega_{\phi_1} \rangle \langle \omega_{\eta_1}^\prime, \omega_{\phi_2} \rangle \langle y_1, x_2 \rangle
			\end{split}
		\end{equation*}
		is called the Rosso form of $U$, where
		$x_i \in U^+_{\mu_i},\; y_i \in U^-_{-\nu_i},\; \mu_i, \nu_i, \phi_i, \eta_i \in Q^+$.
	\end{definition}
	
	\begin{proposition}\cite{PHR10}\label{graded Rosso}
		As a specialized multi-parameter quantization, the algebra $U=U_{q,G}$ admits:
		\begin{enumerate}
			\item The Rosso form is $\operatorname{ad}_l$-$invariant$, i.e., for all $a,b,c \in U$, one has
			\begin{equation*}
				\langle \,\operatorname{ad}_l\left( a \right) b
				\mid c\rangle =\langle b    \mid \operatorname{ad}_l\left( S\left( a \right) \right) c\,\rangle,
			\end{equation*}
			\item The Rosso form satisfies the orthogonality condition:
			if $\mu_i,\nu_i \in Q^+$, then
			$\langle U_{-\nu _1}^{-}U^0U_{\mu _1}^{+}\mid U_{-\nu _2}^{-}U^0U_{\mu _2}^{+}\rangle$ $\ne 0$
			if and only if $\mu_1=\nu_2,\nu_1=\mu_2$.
		\end{enumerate}
	\end{proposition}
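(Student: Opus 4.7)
The plan is to treat the two parts separately. Part (2), the orthogonality statement, is essentially a consequence of the $Q$-grading of the skew-Hopf pairing once one unwinds the defining formula, while part (1), the $ad_l$-invariance, requires a multiplicative reduction to generators.

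For part (2), I would take arbitrary elements $a_i \in U^-_{-\nu_i} U^0 U^+_{\mu_i}$ for $i=1,2$ and expand $\langle a_1 | a_2 \rangle$ via the defining formula. The result is a scalar multiple of the product $\langle y_2, x_1 \rangle \cdot \langle y_1, x_2 \rangle$, where $y_i \in U^-_{-\nu_i}$ and $x_i \in U^+_{\mu_i}$ are the $U^\pm$-components and the $U^0$-pieces contribute the (nonzero) factor $\langle \omega'_{\eta_2}, \omega_{\phi_1}\rangle \langle \omega'_{\eta_1}, \omega_{\phi_2}\rangle \cdot q^{2(\rho,\nu_1)}$. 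Since the skew-Hopf pairing constructed in \cite{PHR10} is $Q$-graded, meaning $\langle U^-_{-\eta}, U^+_{\mu}\rangle = 0$ unless $\eta = \mu$, non-vanishing forces simultaneously $\nu_2 = \mu_1$ and $\nu_1 = \mu_2$. The converse nontriviality in the matched case follows from the non-degeneracy of the pairing on each graded piece $U^-_{-\mu} \times U^+_\mu$.

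For part (1), I would use the standard subalgebra argument. Let $\mathcal{A} = \{a \in U : \langle \operatorname{ad}_l(a) b \mid c\rangle = \langle b \mid \operatorname{ad}_l(S(a)) c\rangle \text{ for all } b, c \in U\}$. Since $\operatorname{ad}_l$ is an algebra homomorphism $U \to \operatorname{End}(U)$ and $S$ is an anti-homomorphism, one checks that $\mathcal{A}$ is a subalgebra of $U$; it then suffices to verify the identity for $a$ ranging over the generators $\omega_i^{\pm 1}, (\omega'_i)^{\pm 1}, e_i, f_i$. For the group-likes the coproduct is trivial and the identity reduces to the weight-balance built into the definition of $\langle -|-\rangle$. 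For $a = e_i$, one expands $\operatorname{ad}_l(e_i)(b) = e_i b - \omega_i b \omega_i^{-1} e_i$ using $\Delta(e_i) = e_i \otimes 1 + \omega_i \otimes e_i$ and $S(e_i) = -\omega_i^{-1} e_i$, and then evaluates both sides against a triangular decomposition of $b, c$, comparing scalars via the pairing values $\langle f_i, e_j\rangle = \delta_{ij}/(q_i - q_i^{-1})$ together with the orthogonality from part~(2). The case $a = f_i$ is symmetric.

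The main obstacle I anticipate is the bookkeeping for $a = e_i$ (and $f_i$) in part (1), where the $\omega_i$-shifts from the coproduct interact with the multi-parameter coefficients $q_{ij}$ through the commutation rules (A2). Tracking these $q_{ij}$-factors through the coproduct, the antipode, and the $S^2$-twist $q^{2(\rho,\nu_1)}$ in the Rosso form must produce the matching scalars on both sides, and this match is ultimately guaranteed by the constraints $q_{ij}q_{ji} = 1$ and $q_{ii}=1$, together with the algebraic independence of the remaining parameters. A cleaner alternative would be to observe that $U_{q,G}(\mathfrak{g})$ is a specialization of the general multi-parameter quantum group of \cite{PHR10}, for which the corresponding $ad_l$-invariance and orthogonality were already established, and to deduce both claims by descent to the present parameters.
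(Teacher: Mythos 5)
The paper gives no argument for this proposition at all: it is quoted directly from \cite{PHR10}, where the $\operatorname{ad}_l$-invariance and orthogonality of the Rosso form are established for the general multi-parameter quantization of which $U_{q,G}$ is a specialization. Your closing remark therefore coincides exactly with what the paper does, while the body of your proposal supplies the direct verification that the paper omits. That outline is sound. For (2), expanding the defining formula reduces everything to the factors $\langle y_2,x_1\rangle$ and $\langle y_1,x_2\rangle$, which vanish unless $\nu_2=\mu_1$ and $\nu_1=\mu_2$ by the $Q$-grading of the skew-Hopf pairing; note, however, that the ``if'' direction needs the nondegeneracy of the restricted pairing on each graded piece $U^-_{-\mu}\times U^+_{\mu}$, which is itself a result of \cite{PHR10} for generic parameters --- so even the ``direct'' route leans on the reference at this point (the paper implicitly uses the same fact later when it chooses dual bases in the proof of Theorem \ref{Rosso nondeg}). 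For (1), the reduction to generators is legitimate: the set of $a$ satisfying the identity is a subalgebra precisely because $\operatorname{ad}_l$ is an algebra homomorphism and $S$ an anti-homomorphism, and the group-like case does come down to the observation that conjugation by $\omega_i$ scales $b$ and $c$ by mutually inverse characters exactly when the degrees are matched as in (2). The one place where your write-up is only a promissory note is the computation for $a=e_i,f_i$: that is where all the multi-parameter bookkeeping lives, and it is not carried out. It does go through (it is the computation of \cite{PHR10}, parallel to Jantzen's one-parameter treatment), but as written your proof of (1) is an outline rather than a proof; to be self-contained you must either execute that computation or, as both you and the authors ultimately do, cite \cite{PHR10}.
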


	\begin{theorem}\label{Rosso nondeg}
		The Rosso form $\langle -\mid-\rangle$ of $U$ is nondegenerate.
	\end{theorem}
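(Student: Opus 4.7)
The plan is to exploit the bigrading of $U$ together with the orthogonality property in Proposition~\ref{graded Rosso}(2). Suppose $z \in U$ lies in the left radical of $\langle -|-\rangle$, and decompose $z = \sum_{\nu, \mu \in Q^+} z_{\nu, \mu}$ with $z_{\nu, \mu} \in U^-_{-\nu} U^0 U^+_\mu$. By that orthogonality, for each fixed $(\nu, \mu)$ the component $z_{\nu, \mu}$ must already pair to zero against the complementary block $U^-_{-\mu} U^0 U^+_\nu$; hence it suffices to establish nondegeneracy of the restriction of the Rosso form to each pair $U^-_{-\nu} U^0 U^+_\mu \times U^-_{-\mu} U^0 U^+_\nu$.

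Using the triangular decomposition (Theorem~\ref{trangular decom}), write a typical element of $U^-_{-\nu} U^0 U^+_\mu$ as $y\, \omega'_\nu\, (\omega'_\eta \omega_\phi)\, x$ with $y \in U^-_{-\nu}$, $x \in U^+_\mu$, $(\eta, \phi) \in Q \times Q$, and similarly for the complementary block. The defining formula of the Rosso form then exhibits, up to the nonzero constant $q^{2(\rho,\nu)}$, a factorization of the restricted pairing as
\[
\bigl\langle y_1 \otimes h_1 \otimes x_1 \,\bigm|\, y_2 \otimes h_2 \otimes x_2 \bigr\rangle = q^{2(\rho,\nu)}\, \langle y_2, x_1\rangle \, B(h_1, h_2) \, \langle y_1, x_2\rangle,
\]
where $h_i = \omega'_{\eta_i}\omega_{\phi_i}$ and the Cartan form is $B(\omega'_\eta \omega_\phi,\, \omega'_{\eta'} \omega_{\phi'}) = \langle \omega'_{\eta'}, \omega_\phi\rangle \langle \omega'_\eta, \omega_{\phi'}\rangle$. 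A tensor product of nondegenerate bilinear forms is itself nondegenerate, so the problem reduces to checking that each of (i) the Hopf pairings $U^-_{-\gamma} \times U^+_\gamma \to \mathbb{K}$ for $\gamma \in \{\nu, \mu\}$ and (ii) the Cartan form $B$ on $U^0 \times U^0$ is nondegenerate. The pairings in (i) are nondegenerate by the general theory of specialized multi-parameter quantum doubles, recorded in \cite[Thm.~20]{PHR10}. For (ii), using the basis $\{\omega'_\eta \omega_\phi\}_{(\eta,\phi) \in Q \times Q}$ of $U^0$, each basis vector $\omega'_\eta \omega_\phi$ determines the character $(\eta_0, \phi_0) \mapsto \langle \omega'_{\eta_0}, \omega_\phi\rangle \langle \omega'_\eta, \omega_{\phi_0}\rangle$ of the abelian group $Q \times Q$; by the same sort of specialization argument used in the proof of Lemma~\ref{iff zero} (now applied to arbitrary $\lambda \in Q$ rather than only to dominant weights far from zero), the algebraic independence of $\{q\} \cup \{q_{ij} : i < j\}$ and the assumption that none is a root of unity force distinct pairs $(\eta, \phi)$ to produce distinct characters. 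The classical linear independence of distinct characters of an abelian group then yields the nondegeneracy of $B$.

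The main obstacle is the Cartan step (ii): it is precisely here that the genericity hypothesis on the parameter matrix $G$ is indispensable, for any polynomial relation among $\{q\} \cup \{q_{ij} : i < j\}$ could conceivably collapse two otherwise-distinct characters on $Q \times Q$ and destroy the nondegeneracy. The Hopf-pairing factors in (i), in contrast, are inherited directly from the general construction of \cite{PHR10} without further effort, and the whole argument assembles cleanly once the Cartan factor is settled.
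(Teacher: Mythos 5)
Your argument is correct and follows essentially the same route as the paper's proof: reduce via the orthogonality in Proposition~\ref{graded Rosso}(2) to a single block $U^-_{-\nu}U^0U^+_\mu \times U^-_{-\mu}U^0U^+_\nu$, use the nondegeneracy of the restricted Hopf pairing on $U^-_{-\gamma}\times U^+_\gamma$ (the paper invokes this implicitly by choosing dual bases), and settle the Cartan factor by showing that distinct pairs $(\eta,\phi)$ give distinct characters of $Q\times Q$ — via algebraic independence of the parameters and invertibility of $DA$ — and then applying linear independence of characters (Dedekind). Your tensor-product factorization is just a cleaner packaging of the paper's explicit dual-basis computation, so no further comment is needed.
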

	\begin{proof}
		Since the pair $\langle -,-\rangle $ has orthogonality for the grading, it suffices to check the case when $u \in U_{-\nu}^{-}U^0U_{\mu}^{+}$. If $\langle u\mid v\rangle=0 $ holds for all $v\in U_{-\mu}^{-}U^0U_{\nu}^{+}$, then $u=0$.
		
		Denote $d_\mu = \dim U_{\mu}^{+}$. Let $\left\{ u_{1}^{\mu},...,u_{d_{\mu}}^{\mu} \right\} $ be a basis of $U_\mu^+$ and $\left\{ v_{1}^{\mu},...,v_{d_{\mu}}^{\mu} \right\} $ be its dual basis in $U^-_{-\mu}$ with respect to the pair, that is, $
		\left< v_{i}^{\mu},u_{j}^{\mu} \right> =\delta _{ij}
		$. Hence
		\begin{equation*}
			U_{-\nu}^{-} U^0 U_{\mu}^{+} = \operatorname{span}_{\mathbb{K}} \left\{ \left. \left( v_i^{\nu} {\omega_{\nu}^\prime}^{-1} \right) \omega_{\eta}^\prime \omega_{\phi} u_j^{\mu} \ \right| \ 1 \leqslant i \leqslant d_{\nu},\ 1 \leqslant j \leqslant d_{\mu} \right\}.
		\end{equation*}
		
		Let $u=\sum_{i,j,\eta, \phi}{k_{i,j,\eta, \phi}\left( v_{i}^{\nu}{\omega _{\nu}^{\prime}}^{-1} \right) \omega _{\eta}^{\prime}\omega _{\phi}u_{j}^{\mu}}$,
		and $v=\left( v_{k}^{\mu}{\omega _{\mu}^{\prime}}^{-1} \right) \omega _{\eta \prime}^{\prime}\omega _{\phi \prime}u_{l}^{\nu}$,
		where $1\leqslant k\leqslant d_{\mu},1\leqslant l\leqslant d_{\nu},\eta^\prime,\phi^\prime\in Q$.
		Suppose $\langle u
		\mid v\rangle =0$, we have
		\begin{equation} \label{Dedekind}
			\begin{aligned}
				0 &= \sum_{\eta, \phi} k_{k,l,\eta,\phi} \, q^{2(\rho, \nu)} \, \langle \omega_{\eta}^\prime, \omega_{\phi'} \rangle \, \langle \omega_{\eta'}^\prime, \omega_{\phi} \rangle \\
				&= \sum_{\eta, \phi} k_{k,l,\eta,\phi} \, q^{2(\rho, \nu)} \, \chi_{\eta,\phi}(\eta', \phi'),
			\end{aligned}
		\end{equation}
		where $\chi _{\eta ,\phi}: Q\times Q\rightarrow \mathbb{K} ^{\times}$ denotes the group homomorphism by
		$\chi _{\eta ,\phi}\left( \eta^\prime,\phi^\prime \right) =\langle \omega _{\eta}^{\prime},\omega _{\phi \prime}\rangle \langle \omega _{\eta \prime}^{\prime},\omega _{\phi}\rangle$.
		
		We claim that these $\chi _{\eta ,\phi}$ in Eq.(\ref{Dedekind}) are different characters, i.e., if $\chi _{\eta, \phi}=\chi _{\eta \prime,\phi \prime}$, then $(\eta,\phi)=(\eta^\prime,\phi^\prime)$.
		This is verified by
		\begin{equation*}
			1 = \frac{\chi_{\eta, \phi}(0, \alpha_j)}{\chi_{\eta^\prime, \phi^\prime}(0, \alpha_j)}
			= \prod_{i=1}^n \left< \omega_i^\prime, \omega_j \right>^{\eta_i - \eta_i^\prime}
			= \prod_{i=1}^n a_{ji}^{\eta_i - \eta_i^\prime}
			= \prod_{i=1}^n \left( q_j^{a_{ji}} q_{ji} \right)^{\eta_i - \eta_i^\prime},
		\end{equation*}
		which in turn gives $DA(\eta-\eta^\prime)=0$, then $\eta=\eta^\prime$, since $DA$ is invertible; a similar argument leads to the conclusion $\phi=\phi^\prime$.
		
		Finally, we have $k_{l,k,\eta,\phi}=0$ from Eq.(\ref{Dedekind}) by Dedekind theorem, which leads to $u=0$.
	\end{proof}
	
	\subsection{Weight modules of $U_{q,G}$}
	
	Let $\mathcal{B}= U_{q,G}\left( \mathfrak{b} \right)$, and $V^\psi$ be the one-dimensional $\mathcal{B}$-module, on which each $e_i$ acts as multiplication by $0$, and $U^0$ acts via an algebra homomorphism $\psi:U^0 \rightarrow \mathbb{K}$. By Theorem \ref{trangular decom}, the Verma module $M(\psi)$ with highest weight $\psi$ is induced from $V^{\psi}$ as $M(\psi)=U_{q,G}\otimes_{\mathcal{B}}V^{\psi}$.
	
	\begin{proposition}\cite{PHR10}\label{irr mod}
		For $U=U_{q,G}$, we have
		\begin{enumerate}
			\item Let $ v_\lambda $ be a highest weight vector of $ M(\lambda) $ for $ \lambda \in \Lambda^+ $, then
			\begin{equation*}
				L(\lambda) = M(\lambda)\Big/ \left( \sum_{i=1}^n U f_i^{( \lambda, \alpha_i^\vee ) + 1} \cdot v_\lambda \right)
			\end{equation*}
			is simple, with the decomposition of the weight space
			$L(\lambda) = \bigoplus_{\eta \leqslant \lambda} L(\lambda)_\eta,$
			where
			\begin{equation*}
				L(\lambda)_\eta = \{\, x \in L(\lambda) \mid \omega_i \cdot x = \varrho^\eta(\omega_i) x, \; \omega'_i \cdot x = \varrho^\eta(\omega'_i) x, \; \forall\; i\in I\, \}.
			\end{equation*}
			\item The elements $ e_i, f_i, \; \forall\; i\in I $ act locally nilpotently on $ L(\lambda) $.
			\item $\dim L(\lambda)_\eta = \dim L(\lambda)_{\sigma(\eta)}, \; \forall\; \eta \in \Lambda, \; \sigma \in W$.
			\item For any $\beta=\sum_{i=1}^n{m_i\alpha _i}\in Q^+$ such that all $m_i\leqslant \left( \lambda, \alpha _{i}^{\lor} \right) $, the linear map $x \mapsto x.v_{\lambda}$ from $U_{-\beta}^{-}$ to $M(\lambda)$ is injective.
		\end{enumerate}
	\end{proposition}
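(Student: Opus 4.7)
The plan is to handle the four parts in order, mirroring the classical Drinfeld--Jimbo argument while tracking the extra $q_{ij}$ twisting induced by the parameter matrix $G$.

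For (1), the first step is to show that $N(\lambda):=\sum_i U f_i^{N_i+1} v_\lambda$ with $N_i:=\langle\lambda,\alpha_i^\vee\rangle$ is a proper submodule, which reduces to verifying that each generator $f_i^{N_i+1}v_\lambda$ is annihilated by every $e_j$. For $j\ne i$ this is immediate from (A3), and for $j=i$ a quantum binomial identity in the rank-one subalgebra generated by $e_i,f_i,\omega_i^{\pm 1},{\omega_i'}^{\pm 1}$ produces a coefficient proportional to $[N_i-N_i]_i=0$. Simplicity of $L(\lambda)$ is then standard: any nonzero submodule contains a vector killed by all $e_j$, whose weight must be $\lambda$ by the grading. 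The weight space decomposition descends from the triangular decomposition of Theorem~\ref{trangular decom}.

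For (2), local nilpotency of $f_i$ is built into the defining ideal of $L(\lambda)$. For $e_i$, I would restrict to the rank-one subalgebra $U_i$ above, which behaves as a copy of $U_{q_i}(\mathfrak{sl}_2)$ enlarged by commuting Cartan elements that act as scalars on weight vectors. Using (1) together with finiteness of the $\alpha_i$-string $\{m\ge 0 : L(\lambda)_{\eta+m\alpha_i}\ne 0\}$, each $U_i$-cyclic submodule generated by a weight vector is finite-dimensional, forcing $e_i$ to act nilpotently.

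For (3), the strategy is to construct Lusztig-style braid operators $T_i$ on $L(\lambda)$ realizing the simple reflections $s_i$ on weights. Their definition on weight vectors is a finite sum of $e_i^a f_i^b$-monomials (finite by part (2)), and the twisted Serre relations in (A4) are precisely what is needed for $T_i$ to be well-defined. A direct character computation using $\varrho^\eta$ confirms $T_i(L(\lambda)_\eta)\subseteq L(\lambda)_{s_i(\eta)}$, and iterating gives the full $W$-invariance of dimensions. Finally, (4) reduces to a weight-counting argument: $x\mapsto x\cdot v_\lambda$ is injective from $U^-_{-\beta}$ into $M(\lambda)$ by the triangular decomposition, and any nonzero element of $N(\lambda)$ has weight bounded above by $\lambda-(N_i+1)\alpha_i$ for some $i$, which cannot equal $\lambda-\beta$ under the hypothesis $m_i\le\langle\lambda,\alpha_i^\vee\rangle$ for all $i$; hence $U^-_{-\beta}\cdot v_\lambda$ survives in the quotient.

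The principal obstacle I anticipate lies in (3): the $q_{ij}$-dependent Serre relations in (A4) mean the Lusztig formulas must be retwisted by scalars extracted from $G$, and verifying that the resulting $T_i$ still satisfy the braid relations compatibly with the twisted Cartan action in (A2) is the one point where the Okado-Yamane setting genuinely diverges from single-parameter $U_q(\mathfrak{g})$ and a line-by-line rewrite of the classical computation is needed.
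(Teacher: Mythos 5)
The paper does not prove this proposition at all: it is quoted from \cite{PHR10}, so your reconstruction has to stand on its own, and two of its steps are genuinely incomplete. The most serious gap is the simplicity claim in (1). Your verification that each $f_i^{N_i+1}v_\lambda$ is primitive (killed by every $e_j$), and hence that $N(\lambda)=\sum_i Uf_i^{N_i+1}v_\lambda$ is proper, is the right start; but the assertion that a primitive vector of the quotient ``must have weight $\lambda$ by the grading'' is precisely the nontrivial content of the theorem and does not follow from the grading. Nothing in the weight structure forbids a primitive vector of weight $\mu<\lambda$; ruling it out requires an extra input --- classically, integrability forces $\mu$ dominant and a quantum Casimir (or central element built from the Rosso form) acts on any highest weight subquotient by a scalar determined by $\mu$, so that $|\mu+\rho|=|\lambda+\rho|$ together with $\mu\le\lambda$ gives $\mu=\lambda$; alternatively one compares characters with the classical module via a specialization argument. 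Your sketch supplies neither. (In the present setting one could instead exploit the rigidity of the characters $\varrho^\mu$ coming from the algebraically independent parameters, in the spirit of Lemma~\ref{iff zero}, but that argument also has to be made.)

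There are three further soft spots. In (2), local nilpotency of $f_i$ is \emph{not} ``built into the defining ideal'': the ideal only gives $f_i^{N_i+1}v_\lambda=0$, and to propagate nilpotency to an arbitrary $y\cdot v_\lambda$ one needs the standard argument that $\{m\in L(\lambda)\mid f_i^{N}m=0 \text{ for } N\gg 0\}$ is a submodule, which rests on $(\operatorname{ad}f_i)^{1-a_{ij}}(f_j)=0$, i.e.\ exactly the twisted Serre relations (A4) --- this, rather than (3), is where the $q_{ij}$ genuinely intervene. In (4), your claim that every nonzero element of $N(\lambda)$ has weight $\le\lambda-(N_i+1)\alpha_i$ for some $i$ is false for a general subspace of the form $Uw$; it holds here only because $f_i^{N_i+1}v_\lambda$ is primitive, so that $Uf_i^{N_i+1}v_\lambda=U^-f_i^{N_i+1}v_\lambda$ --- you have this ingredient from (1) but must draw the conclusion explicitly. (Note also that the statement as printed maps into $M(\lambda)$, where injectivity is immediate from Theorem~\ref{trangular decom} with no hypothesis on the $m_i$; the version actually used in Theorem~\ref{injective} is the one for $L(\lambda)$, which is what you address.) Finally, the braid operators you defer to in (3) are avoidable: each string $\bigoplus_m L(\lambda)_{\eta+m\alpha_i}$ is a finite-dimensional module over the rank-one subalgebra by (2), on which the extra Cartan generators act by scalars, and the resulting $\mathfrak{sl}_2$-theory already yields $\dim L(\lambda)_\eta=\dim L(\lambda)_{s_i(\eta)}$ without any retwisted Lusztig formulas.
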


	\section{Injectivity and the image of Harish-Chandra homomorphism $\xi$}
	
	\subsection{Injective properties} \label{sec3.1}
	
	Let $Z(U)$ be the centre of $U$, then $Z(U)\subset U_{0}$. Defining an algebra homomorphism
	$\gamma ^{-\rho}:U^0\rightarrow U^0$ as
	$\gamma ^{-\rho}(\omega _{\eta}^{\prime}\omega _{\phi})=\varrho ^{-\rho}\left( \omega _{\eta}^{\prime}\omega _{\phi} \right) \omega _{\eta}^{\prime}\omega _{\phi}$,
	and the canonical projection $\pi : U_0 \rightarrow U^0$.
	\begin{definition}
		The Harish-Chandra homomorphism $\xi : Z(U) \rightarrow U^0$ is defined by
		\begin{equation*}
			\xi = \gamma^{-\rho} \circ \left. \pi \right|_{Z(U)},
		\end{equation*}
		and the Weyl group $W$ has a natural action on $U^0_\flat := \bigoplus_{\eta \in Q} \mathbb{K} \, \omega'_\eta \omega_{-\eta}$ that
		\begin{equation*}
			\sigma(\omega'_\eta \omega_{-\eta}) := \omega'_{\sigma(\eta)} \omega_{-\sigma(\eta)}, \quad \forall\; \sigma \in W, \; \eta \in Q.
		\end{equation*}
	\end{definition}
	
	\begin{theorem}\label{injective}
		The Harish-Chandra homomorphism $\xi:Z(U)\rightarrow U^0$ is injective.
	\end{theorem}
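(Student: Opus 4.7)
The strategy has two parts: first reduce the injectivity of $\xi$ to that of the Harish-Chandra projection $\pi|_{Z(U)}$, and then use Verma modules together with Proposition~\ref{irr mod}(4) and Lemma~\ref{iff zero} to force any such $z$ to vanish.

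Since $\gamma^{-\rho}$ multiplies each basis vector $\omega'_\eta\omega_\phi$ of $U^0$ by the nonzero scalar $\varrho^{-\rho}(\omega'_\eta\omega_\phi)\in\mathbb{K}^*$, it is an algebra automorphism of $U^0$, so $\xi(z)=0$ iff $\pi(z)=0$. Take $z\in Z(U)$ with $\pi(z)=0$. Using Theorem~\ref{trangular decom}, decompose $z=\sum_{\mu\in Q^+}z_\mu$ with $z_\mu\in U^-_{-\mu}U^0 U^+_\mu$; then $z_0=\pi(z)=0$, so $z=\sum_{\mu>0}z_\mu$. For any $\lambda\in\Lambda^+$ and the highest-weight vector $v_\lambda\in M(\lambda)$, each summand $z_\mu$ with $\mu>0$ ends in a factor from $U^+_\mu$ that annihilates $v_\lambda$, giving $z v_\lambda=0$. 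Centrality of $z$ together with $M(\lambda)=U\cdot v_\lambda$ then yields $z\cdot M(\lambda)=0$ for every $\lambda\in\Lambda^+$.

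Assume for contradiction that $z\ne 0$, and pick $\mu^*\in Q^+$ minimal with respect to the partial order among $\{\mu : z_\mu\ne 0\}$. Fix bases $\{v_k\}\subset U^-_{-\mu^*}$ and $\{u_l\}\subset U^+_{\mu^*}$ dual with respect to the Hopf pairing $\langle-,-\rangle$, and write $z_{\mu^*}=\sum_{k,l}v_k h_{kl}u_l$ with $h_{kl}\in U^0$. Apply $z$ to $v_j v_\lambda\in M(\lambda)_{\lambda-\mu^*}$: for every other $\mu$ appearing in $z$, minimality of $\mu^*$ forces $\mu\not\le\mu^*$, so $u\cdot v_j v_\lambda\in M(\lambda)_{\lambda+\mu-\mu^*}=0$ for any $u\in U^+_\mu$. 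Only $z_{\mu^*}$ contributes. Since $u_l v_j v_\lambda\in M(\lambda)_\lambda=\mathbb{K} v_\lambda$, we may write $u_l v_j v_\lambda=c^\lambda_{lj}v_\lambda$, so that
$$0=z(v_j v_\lambda)=\sum_{k,l}c^\lambda_{lj}\,\varrho^\lambda(h_{kl})\,v_k v_\lambda.$$

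By Proposition~\ref{irr mod}(4), for $\lambda\ge N\rho$ with $N$ sufficiently large the vectors $\{v_k v_\lambda\}$ are linearly independent in $M(\lambda)$, and a Shapovalov-type analysis shows that the matrix $[c^\lambda_{lj}]$ is invertible for generic such $\lambda$: its leading diagonal term is a nonvanishing product of factors $\varrho^\lambda(\omega_\alpha)-\varrho^\lambda({\omega'_\alpha}^{-1})$, whose nonvanishing is secured by the algebraic independence of $\{q\}\cup\{q_{ij}\mid i<j\}$. Consequently $\varrho^\lambda(h_{kl})=0$ for all $k,l$ and all such $\lambda$. Expanding $h_{kl}$ in the basis $\{\omega'_\eta\omega_\phi\}$, Lemma~\ref{iff zero} together with Dedekind's theorem on distinct characters (restricted to the cone $\lambda\ge N\rho$) forces each coefficient of $h_{kl}$ to vanish, so $h_{kl}=0$ and hence $z_{\mu^*}=0$, contradicting the choice of $\mu^*$. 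The main obstacle is the generic invertibility of the Shapovalov-type matrix $[c^\lambda_{lj}]$ in the multi-parameter setting, for which the algebraic independence hypothesis is essential.
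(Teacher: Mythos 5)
Your reduction to $\pi|_{Z(U)}$, the decomposition $z=\sum_{\mu}z_\mu$ with $z_0=0$, the observation that $z\cdot M(\lambda)=0$, and the isolation of the minimal component $z_{\mu^*}$ on the weight space $M(\lambda)_{\lambda-\mu^*}$ are all sound. The genuine gap is the step you yourself flag as ``the main obstacle'': the invertibility of the matrix $[c^\lambda_{lj}]$. You assert it ``for generic such $\lambda$'' via a Shapovalov-type determinant whose ``leading diagonal term'' is claimed to be a nonvanishing product of factors $\varrho^\lambda(\omega_\alpha)-\varrho^\lambda({\omega'_\alpha}^{-1})$; no such determinant formula is established for $U_{q,G}(\mathfrak{g})$, the ``leading term'' argument is not carried out, and ``generic'' is too weak for your last step, since the Dedekind/Vandermonde argument on the coefficients of $h_{kl}$ needs $\varrho^\lambda(h_{kl})=0$ on a full translated cone $\lambda\geqslant N\rho$ (or at least on all multiples $m\lambda$ of a fixed $\lambda$), not merely on an unspecified generic set. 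The gap is repairable with the paper's own tools: if $w=\sum_j a_j v_j v_\lambda\in M(\lambda)_{\lambda-\mu^*}$ lies in the kernel of $[c^\lambda_{lj}]$, then $U^+_{\mu^*}w=0$; since the weight-$\lambda$ component of $Uw$ is $U^0U^+_{\mu^*}w$, the submodule $Uw$ is proper, so $w$ lies in the maximal submodule of $M(\lambda)$, whose weight-$(\lambda-\mu^*)$ component vanishes for $\lambda\geqslant N\rho$ by Proposition~\ref{irr mod} (simplicity of $L(\lambda)$ together with part (4)). This gives invertibility for \emph{all} $\lambda\geqslant N\rho$, which is exactly what the final character-independence step requires; as written, however, the crucial nondegeneracy is asserted rather than proved.

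For comparison, the paper's proof sidesteps any Shapovalov-type nondegeneracy: it works directly in the simple module $L(\lambda)$, uses $e_iz=ze_i$ and the $Q$-grading to extract the identity $\sum_k e_iy_kt_{k,l}=\sum_k y_ke_it_{k,l}$, concludes that $m=\sum_k y_kt_{k,l}\cdot v_\lambda$ is a singular vector of lower weight and hence zero, and then only needs the injectivity of $x\mapsto x\cdot v_\lambda$ on $U^-_{-\nu}$ (Proposition~\ref{irr mod}(4)) plus Lemma~\ref{iff zero} with a Vandermonde argument on $\varrho^{m\lambda}$. If you incorporate the repair above, your Verma-module route becomes a valid alternative, but it proves along the way a statement (nondegeneracy of the contravariant pairing on $M(\lambda)_{\lambda-\mu^*}$ for large dominant $\lambda$) that the paper's argument never needs.
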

	
	\begin{proof}
		We aim to show that if $z \in Z(U)$ satisfies $\xi(z) = 0$, then $z = 0$.
		
		Since $Z(U) \subset U_0 = U^0 \bigoplus K$, where $K$ is the two-sided ideal $\bigoplus_{\nu > 0} U^-_{-\nu} U^0 U^+_{\nu}$ and is the kernel of $\xi$, we write $z = \sum_{\nu \in Q^+} z_\nu$ with each $z_\nu \in U^-_{-\nu} U^0 U^+_\nu$. The condition $\xi(z) = 0$ implies $z_0 = 0$. Now assume that $z \ne 0$, then there exists a minimal $\nu \in Q^+$ such that $z_\nu \ne 0$.
		
		Choose bases $\{x_l\}$ and $\{y_k\}$ for $U^+_\nu$ and $U^-_{-\nu}$, respectively. Writing $z_\nu= \sum_{k,l} y_k t_{k,l} x_l, \; t_{k,l} \in U^0$, we have for each $i$:
		\begin{equation*}
			0 = e_i z - z e_i = \sum_{\gamma \ne \nu} (e_i z_\gamma - z_\gamma e_i) + (e_i z_\nu - z_\nu e_i).
		\end{equation*}
		For the last term
		\begin{equation*}
			e_i z_\nu - z_\nu e_i = \sum_{k,l} (e_i y_k - y_k e_i) t_{k,l} x_l + \sum_{k,l} y_k (e_i t_{k,l} x_l - t_{k,l} x_l e_i),
		\end{equation*}
		we have $\sum_{k,l} (e_i y_k - y_k e_i) t_{k,l} x_l = 0$ since this term alone lies in $U^-_{-(\nu - \alpha_i)} U^0 U^+_\nu$.
		Hence $\sum_k e_i y_k t_{k,l} = \sum_k y_k e_i t_{k,l}$ for each $l$ since $\{x_l\}$ forms a basis of $U^+_\nu$.
		Fixing an index $l$, for each $\lambda \in \Lambda^+$, we claim that
		\begin{equation*}
			m := \left( \sum_k y_k t_{k,l} \right) \cdot v_\lambda = 0
		\end{equation*}
		in the irreducible weight module $L(\lambda)$ with the highest weight vector $v_\lambda$.
		Indeed, the identities
		\begin{equation*}
			e_i m = \sum_k e_i y_k t_{k,l} \cdot v_\lambda = \sum_k y_k e_i t_{k,l} \cdot v_\lambda = 0, \quad \forall\; i,
		\end{equation*}
		imply that $m$ is a weight vector annihilated by all $e_i$ and lies in a lower weight space, forcing $m=0$ to prevent generating a proper submodule of $L(\lambda)$.
		It follows that
		\begin{equation*}
			\sum_k y_k \varrho^\lambda(t_{k,l}) v_\lambda = 0.
		\end{equation*}
		Taking $N=\max \left\{ \left. k_i \right| \nu =k_1\alpha _1+\cdots+k_n\alpha _n \right\}$ and for any $\lambda \geqslant N\rho $,
		we have $\left( \lambda, {\alpha _i}^{\lor} \right) \geqslant k_i$ and the injective map
		$
		\sum_k y_k \varrho^\lambda(t_{k,l}) \mapsto \left( \sum_k y_k \varrho^\lambda(t_{k,l}) \right) \cdot v_\lambda
		$
		through Lemma \ref{irr mod}, then conclude that
		$
		\sum_k \varrho^\lambda(t_{k,l}) y_k  = 0,
		$
		which leads to $\varrho^\lambda(t_{k,l}) = 0$ for each $k$.
		
		Next, write each $t_{k,l}$ as a finite sum:
		$
		t_{k,l} = \sum_{\eta, \phi} k_{\eta,\phi} \omega'_\eta \omega_\phi, \; k_{\eta,\phi} \in \mathbb{K}.
		$
		Then for all $m \in \mathbb{Z}_{> 0}$, we have
		\begin{equation*}
			0 = \varrho^{m\lambda}(t_{k,l}) = \sum_{\eta,\phi} \left( \varrho^\lambda(\omega'_\eta \omega_\phi) \right)^m k_{\eta,\phi}.
		\end{equation*}
		This gives a homogeneous linear system with a Vandermonde matrix since the values $\varrho^\lambda(\omega'_\eta \omega_\phi)$ are pairwise distinct by Proposition \ref{iff zero}. Therefore, the only solution is $k_{\eta,\phi} = 0$ for all $\eta,\phi$, implying $t_{k,l} = 0$.
		Hence $z_\nu = 0$, contradicting the assumption $z_\nu \ne 0$. This contradiction implies that $z = 0$.
	\end{proof}
	
	\subsection{The image $\operatorname{Im}(\xi)$}
	
	Let $\lambda, \mu \in \Lambda$. Define an algebra map $\varrho^{\lambda,\mu}: U^{0} \to \mathbb{K}$ by $\varrho^{\lambda,\mu}=\varrho^{0,\lambda}\varrho^{\mu,0}$, where
	\begin{equation*}
		\varrho^{0,\mu} :\ \omega_{\eta}^\prime \omega_{\phi} \mapsto  q^{(\eta + \phi, \mu)}, \quad
		\varrho^{\lambda,0} :\ \omega_{\eta}^\prime \omega_{\phi} \mapsto \varrho^{\lambda} \left( \omega_{\eta}^\prime \omega_{\phi} \right).
	\end{equation*}

	\begin{lemma} \label{nondeg of rho^{-,-}}
		We have the following properties:
		\begin{enumerate}
			\item Let $u=\omega _{\eta}^{\prime}\omega _{\phi}$, $\eta$, $\phi \in Q$. If
			$\varrho^{\lambda,\mu}(u)=1$ for all $\lambda,\mu \in \Lambda$, then $u=1$;
			\item If $u\in U^{0}$ satisfies $\varrho^{\lambda,\mu}(u)=0$ for all $\lambda,\mu \in \Lambda$, then $u=0$.
		\end{enumerate}
	\end{lemma}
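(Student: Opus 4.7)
The plan is to reduce both parts to an application of Lemma~\ref{iff zero} and Dedekind's theorem on the linear independence of characters, exploiting the fact that $\lambda$ and $\mu$ enter $\varrho^{\lambda,\mu} = \varrho^{0,\lambda}\varrho^{\mu,0}$ independently.

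For part (1), I would first specialize $\mu = 0$. Since $\varrho^{0}(\omega'_\eta \omega_\phi) = 1$ by direct inspection of the formulas derived in the proof of Lemma~\ref{iff zero}, the hypothesis collapses to
\[
\varrho^{\lambda,0}(\omega'_\eta \omega_\phi) = q^{(\eta+\phi,\lambda)} = 1 \qquad \forall\; \lambda \in \Lambda.
\]
Because $q$ is not a root of unity and the pairing $Q \times \Lambda \to \mathbb{Z}$ is nondegenerate, this forces $\eta + \phi = 0$, i.e.\ $\phi = -\eta$. Now specializing $\lambda = 0$ kills the $\varrho^{0,\lambda}$ factor and the hypothesis becomes $\varrho^\mu(\omega'_\eta \omega_{-\eta}) = 1$ for every $\mu \in \Lambda$, which in particular holds for all dominant $\mu \geq N\rho$. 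Lemma~\ref{iff zero} applied to the pair $(\eta,-\eta)$ then yields $\eta = 0$, hence $u = 1$.

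For part (2), I would expand $u$ as a finite sum in the basis $\{\omega'_\eta \omega_\phi\}_{\eta,\phi \in Q}$ of $U^0$,
\[
u = \sum_{\eta,\phi \in Q} k_{\eta,\phi}\, \omega'_\eta \omega_\phi, \qquad k_{\eta,\phi} \in \mathbb{K},
\]
and view each map $\tau_{\eta,\phi}:(\lambda,\mu) \mapsto \varrho^{\lambda,\mu}(\omega'_\eta \omega_\phi)$ as a character of the abelian group $\Lambda \times \Lambda$ with values in $\mathbb{K}^{\times}$. The key step is to verify that distinct pairs $(\eta,\phi)$ give distinct characters: if $\tau_{\eta,\phi} = \tau_{\eta',\phi'}$, multiplicativity of $\varrho^{\lambda,\mu}$ implies $\tau_{\eta-\eta',\phi-\phi'} \equiv 1$, so part (1) applied to $\omega'_{\eta-\eta'}\omega_{\phi-\phi'}$ gives $(\eta-\eta',\phi-\phi') = (0,0)$. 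Dedekind's theorem on linear independence of distinct characters, used exactly in the spirit of the proof of Theorem~\ref{Rosso nondeg}, then forces every $k_{\eta,\phi}$ to vanish, so $u = 0$.

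The most delicate moment will be the separation argument in part (1). One must disentangle the contribution of the transcendental $q$ from that of the independent parameters $\{q_{ij} \mid i<j\}$ present in $\varrho^\mu(\omega'_\eta\omega_\phi) = q^{(\phi-\eta)^T DA \mu}\prod q_{ij}^{-\mu_i(\eta_j+\phi_j)}$; this is precisely where the standing hypothesis that $\{q\} \cup \{q_{ij}\mid i<j\}$ is algebraically independent becomes indispensable, since it lets one treat the $q$-factor and the $q_{ij}$-factors separately when imposing identities in $\lambda$ and $\mu$. Once $\phi = -\eta$ is extracted, the $q_{ij}$-factors automatically trivialize and the remaining purely $q$-identity is resolved by the invertibility of $DA$ (or equivalently by a second invocation of Lemma~\ref{iff zero}); after that, part (2) is a routine character-theoretic extension.
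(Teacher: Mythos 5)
Your argument is correct and follows essentially the same route as the paper: isolate the $\lambda$-dependence (the $q^{(\eta+\phi,\cdot)}$ factor) to force $\eta+\phi=0$, then isolate the $\varrho^{\bullet}$ factor to force $\eta=\phi=0$, and deduce part (2) from part (1) via Dedekind's linear independence of the characters $\kappa_{\eta,\phi}$ on $\Lambda\times\Lambda$. The only cosmetic difference is that for the final step of (1) you invoke Lemma~\ref{iff zero} (whose proof is itself the invertibility of $DA$ together with the algebraic independence of $\{q\}\cup\{q_{ij}\}$), whereas the paper splits $\varrho^{\lambda,\mu}(u)=1$ into the system \eqref{strong equation} at once and then specializes $\lambda=0$ and $\mu=0$.
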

	
	\begin{proof}		
		(1) Since
		$
		1=\varrho^{\lambda,\mu}(\omega^{\prime}_\eta \omega_{\phi})=
		q^{(\phi-\eta)^T D A \lambda} \left( \prod_{i,j} q_{ij}^{-\lambda_i (\phi + \eta)_j} \right)
		\cdot
		q^{(\eta + \phi, \mu)},
		$
		the algebraic independence of the parameters $q$ and $\{q_{ij}\}$
		implies the identity
		\begin{equation} \label{strong equation}
			\begin{cases}
				(\phi - \eta)^T D A \lambda + (\eta + \phi, \mu) = 0, \\
				-\lambda _i\left( \eta +\phi \right)_j +\lambda _j\left( \eta +\phi \right)_i
				=0.
			\end{cases}
		\end{equation}
		When $\lambda=0$, the equality $\eta+\phi=0$ holds. Setting $\mu=0$, the invertibility of $DA$ implies $\eta-\phi=0$. Consequently, we conclude $\eta=\phi=0$ and $u=1$.
		
		(2) Fixing a pair $(\eta,\phi)\in Q\times Q$, one can define a character $\kappa _{\eta,\phi}$ on the group $\Lambda \times \Lambda$ to be $\kappa_{\eta,\phi}:(\lambda,\mu)\mapsto \varrho^{\lambda,\mu}(\omega^{\prime}_{\mu}\omega_{\phi})$. Let $u=\sum_{\left( \eta, \phi \right)}{k_{\eta, \phi}\omega _{\eta}^{\prime}\omega _{\phi}},k_{\eta, \phi} \in \mathbb{K}$.
		Then
		\begin{equation*}
			0 = \varrho^{\lambda,\mu}(u) = \sum_{(\eta, \phi)} k_{\eta, \phi} \varrho^{\lambda,\mu} \left( \omega_{\eta}^\prime \omega_{\phi} \right) = \sum_{(\eta, \phi)} k_{\eta, \phi} \kappa_{\eta, \phi} \left( \omega_{\eta}^\prime \omega_{\phi} \right).
		\end{equation*}
		The property (1) implies that all characters ${\kappa _{\eta, \phi}}$ are distinct, which forces all $k_{\eta,\phi}=0$ and $u=0$.
	\end{proof}
	
	\begin{remark}
		This result differs from \cite[Lem.~23]{HW25} in the two-parameter quantum groups $U_{r,s}(\mathfrak{g})$, %where $\mathfrak{g}$ was restricted to an even rank. %
		where $\mathfrak{g}$ is of even rank.
		The reason is that $U_{q,G}(\mathfrak{g})$ involves more algebraically independent parameters $q$ and $\{q_{ij}\}$, and provides more restrictive conditions (\ref{strong equation}) on weights $\phi$ and $\eta$, and further guarantees $\xi(Z(U)) \subseteq (U^0_\flat)^W$ for the Harish-Chandra homomorphism for any rank in the following process.
	\end{remark}
	
	\begin{lemma}\label{weyl group act}
		$\varrho^{\sigma(\lambda),\mu}(u)=\varrho^{\lambda,\mu}(\sigma^{-1}(u))$, for $u\in U^0_\flat$, $\sigma \in W$, $\lambda,\mu \in \Lambda$.
	\end{lemma}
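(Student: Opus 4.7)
The plan is to reduce to the basis elements $u = \omega'_\eta \omega_{-\eta}$ by linearity and then compute both sides directly. The key observation is that on $U^0_\flat$ the $\mu$-dependent factor collapses: since $\varrho^{0,\mu}(\omega'_\eta \omega_{-\eta}) = q^{(\eta + (-\eta), \mu)} = 1$, we have $\varrho^{\lambda,\mu}(\omega'_\eta\omega_{-\eta}) = \varrho^{\lambda,0}(\omega'_\eta\omega_{-\eta}) = \varrho^\lambda(\omega'_\eta\omega_{-\eta})$ for every $\mu \in \Lambda$, and the same holds after replacing $u$ by $\sigma^{-1}(u) = \omega'_{\sigma^{-1}(\eta)}\omega_{-\sigma^{-1}(\eta)}$, which still lies in $U^0_\flat$. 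Thus the claim reduces to showing
\begin{equation*}
\varrho^{\sigma(\lambda)}(\omega'_\eta\omega_{-\eta}) = \varrho^\lambda(\omega'_{\sigma^{-1}(\eta)}\omega_{-\sigma^{-1}(\eta)}).
\end{equation*}

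Next I would plug in the explicit formulas for $\varrho^\lambda(\omega_\phi)$ and $\varrho^\lambda(\omega'_\eta)$ derived in the proof of Lemma \ref{iff zero}. Specializing to $\phi = -\eta$, the contributions involving $q_{ij}^{\pm \lambda_i\eta_j}$ cancel pairwise, leaving only
\begin{equation*}
\varrho^\lambda(\omega'_\eta\omega_{-\eta}) = q^{-2\,\eta^{T}DA\,\lambda}.
\end{equation*}
Since $(DA)_{ij} = d_i a_{ij} = (\alpha_i,\alpha_j)$, this exponent is $-2(\eta,\lambda)$ with respect to the standard $W$-invariant form, so the identity we need becomes $q^{-2(\eta,\sigma(\lambda))} = q^{-2(\sigma^{-1}(\eta),\lambda)}$, which is immediate from the $W$-invariance of $(-,-)$.

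There is essentially no obstacle; the only subtle point is noticing that the $\mu$-factor trivializes on $U^0_\flat$, so that the statement, despite being phrased using the two-variable homomorphism $\varrho^{\lambda,\mu}$, depends only on $\lambda$. Everything else is a direct substitution plus Weyl invariance of the Cartan form.
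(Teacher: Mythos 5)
Your proposal is correct and follows essentially the same route as the paper's proof: both reduce to $u=\omega'_\eta\omega_{-\eta}$, observe that $\varrho^{0,\mu}$ is trivial on $U^0_\flat$ since $\eta+(-\eta)=0$, and reduce the $\lambda$-part to the $W$-invariance of the symmetrized form $\eta^T DA\lambda=(\eta,\lambda)$ after the $q_{ij}$-factors cancel. The only cosmetic difference is that you treat a general $\sigma$ at once via $(\eta,\sigma(\lambda))=(\sigma^{-1}(\eta),\lambda)$, whereas the paper first reduces to simple reflections.
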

	
	\begin{proof}
		It suffices to check when $u=\omega _{\eta}^{\prime}\omega _{-\eta}$ and $\sigma$ is a simple reflection. Let $\Sigma$ denote the matrix of $\sigma$, we have
		\begin{equation*}
			\begin{split}
				\varrho^{\lambda,0} \big(\sigma_i^{-1}(u)\big)
				&= \varrho^{\lambda} \big(\omega_{\sigma(\eta)}' \omega_{-\eta} \big)= q^{-2 (\sigma(\eta))^T D A \lambda} = q^{-2 \eta^T \Sigma D A \lambda} \\
				&= q^{-2 \eta^T D A \Sigma \lambda} = q^{-2 \eta D A \sigma(\lambda)} = \varrho^{\sigma(\lambda),0} \big(\omega_{\eta}' \omega_{-\eta} \big), \\
				\varrho^{0,\mu} \big( \sigma^{-1}(u) \big)
				&= q^{ \left( \sigma^{-1}(\eta) + \sigma^{-1}(-\eta), \mu \right)}
				= q^{(0, \mu)} = \varrho^{0,\mu} \left( \omega_{\eta}' \omega_{-\eta} \right).
			\end{split}
		\end{equation*}
		
		This completes the proof.
	\end{proof}

	\begin{lemma}\label{weyl group inv}
		For $\sigma \in W$, $\lambda,\mu \in \Lambda$. If $u\in U^0$ satisfies $\varrho ^{\sigma \left( \lambda \right), \mu}\left( u \right) =\varrho ^{\lambda, \mu}\left( u \right)$,
		then $u\in (U^0_\flat)^W$.
	\end{lemma}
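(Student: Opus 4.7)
The plan is to proceed in two stages: first establish that $u \in U^0_\flat$, then leverage Lemmas~\ref{weyl group act} and~\ref{nondeg of rho^{-,-}}(2) to upgrade this to $W$-invariance.

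For the first stage, I would write $u = \sum_{\eta,\phi \in Q} k_{\eta,\phi}\, \omega'_\eta \omega_\phi$ and exploit the factorization $\varrho^{\lambda,\mu}(\omega'_\eta \omega_\phi) = q^{(\eta+\phi,\lambda)} \varrho^\mu(\omega'_\eta \omega_\phi)$ coming from $\varrho^{\lambda,\mu} = \varrho^{0,\lambda}\varrho^{\mu,0}$. The hypothesis becomes
\[
\sum_{\eta,\phi} k_{\eta,\phi}\bigl[q^{(\eta+\phi,\sigma(\lambda))} - q^{(\eta+\phi,\lambda)}\bigr] \varrho^\mu(\omega'_\eta \omega_\phi) = 0
\]
for all $\sigma \in W$ and $\lambda,\mu \in \Lambda$. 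I would view this as a vanishing linear combination of the characters $\mu \mapsto \varrho^\mu(\omega'_\eta \omega_\phi)$ on the group $\Lambda$, which are pairwise distinct for distinct $(\eta,\phi)$ by Lemma~\ref{iff zero}. Dedekind's theorem on the linear independence of distinct characters then forces
\[
k_{\eta,\phi}\bigl[q^{(\eta+\phi,\sigma(\lambda))} - q^{(\eta+\phi,\lambda)}\bigr] = 0 \qquad (\forall (\eta,\phi),\ \sigma,\ \lambda).
\]

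If $k_{\eta,\phi} \neq 0$, the bracketed factor must vanish, giving $q^{(\eta+\phi,\sigma(\lambda)-\lambda)} = 1$; since $q$ is not a root of unity and the bilinear form takes integer values on $Q$, this yields $(\eta+\phi,\sigma(\lambda)-\lambda) = 0$ for all $\sigma,\lambda$. Specializing to $\sigma = \sigma_i$ and $\lambda = \omega_i$ gives $\sigma_i(\omega_i) - \omega_i = -\alpha_i$, so $(\eta+\phi,\alpha_i) = 0$ for every simple root $\alpha_i$; non-degeneracy of the form then forces $\eta + \phi = 0$. Hence $k_{\eta,\phi}$ vanishes off the diagonal $\phi = -\eta$, and $u \in U^0_\flat$.

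For the second stage, Lemma~\ref{weyl group act} applied to $u \in U^0_\flat$ converts the hypothesis into $\varrho^{\lambda,\mu}(\sigma^{-1}(u) - u) = 0$ for all $\lambda,\mu \in \Lambda$, and Lemma~\ref{nondeg of rho^{-,-}}(2) then yields $\sigma^{-1}(u) = u$ for every $\sigma \in W$, proving $u \in (U^0_\flat)^W$. The delicate step is the Dedekind argument in stage one: one must confirm the characters $\mu \mapsto \varrho^\mu(\omega'_\eta \omega_\phi)$ are genuinely distinct as functions on all of $\Lambda$, not just on $\Lambda^+$, which follows from Lemma~\ref{iff zero} once one notes that a character of $\Lambda$ is determined by its restriction to the generating subset $\Lambda^+$.
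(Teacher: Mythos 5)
Your second stage (Lemma~\ref{weyl group act} combined with Lemma~\ref{nondeg of rho^{-,-}}(2)) is exactly the paper's closing step and is fine. The problem is in stage one: it rests on the factorization $\varrho^{\lambda,\mu}(\omega'_\eta\omega_\phi)=q^{(\eta+\phi,\lambda)}\varrho^{\mu}(\omega'_\eta\omega_\phi)$, i.e.\ on reading the first superscript as the $q$-power slot and the second as the multi-parameter slot. The paper's displayed component definitions ($\varrho^{0,\mu}\colon\omega'_\eta\omega_\phi\mapsto q^{(\eta+\phi,\mu)}$ and $\varrho^{\lambda,0}\colon\omega'_\eta\omega_\phi\mapsto\varrho^{\lambda}(\omega'_\eta\omega_\phi)$) and every subsequent use (the proof of Lemma~\ref{nondeg of rho^{-,-}}(1), the proof of Lemma~\ref{weyl group act}, Theorem~\ref{contain half1}, and the paper's own proof of this lemma) fix the opposite convention: $\varrho^{\lambda,\mu}(\omega'_\eta\omega_\phi)=\varrho^{\lambda}(\omega'_\eta\omega_\phi)\,q^{(\eta+\phi,\mu)}$. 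The composite line ``$\varrho^{\lambda,\mu}=\varrho^{0,\lambda}\varrho^{\mu,0}$'' is evidently a typo for $\varrho^{\lambda,0}\varrho^{0,\mu}$, which explains the confusion; but the intended meaning is forced by the downstream application, since in Theorem~\ref{contain half1} the Weyl twist sits on the $\rho$-shifted Verma highest weight, i.e.\ on the $\varrho^{\lambda}$-slot.

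Under the operative convention your stage one breaks down at the Dedekind step. The hypothesis becomes $\sum_{\eta,\phi}k_{\eta,\phi}\bigl[\varrho^{\sigma(\lambda)}(\omega'_\eta\omega_\phi)-\varrho^{\lambda}(\omega'_\eta\omega_\phi)\bigr]q^{(\eta+\phi,\mu)}=0$, and the characters $\mu\mapsto q^{(\eta+\phi,\mu)}$ are \emph{not} pairwise distinct for distinct $(\eta,\phi)$ — they depend only on $\eta+\phi$ — so separating in the $\mu$-variable only yields, for each $s\in Q$, the vanishing of $\sum_{\eta+\phi=s}k_{\eta,\phi}\bigl[\varrho^{\sigma(\lambda)}-\varrho^{\lambda}\bigr](\omega'_\eta\omega_\phi)$, not termwise vanishing; moreover the surviving $\sigma$-dependent factor is now a difference of multi-parameter characters, so the specialization $\sigma=\sigma_i$, $\lambda=\varpi_i$ no longer produces $(\eta+\phi,\alpha_i)=0$. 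The paper fills exactly this gap by treating $\varrho^{\lambda,\mu}$ as characters of the full group $\Lambda\times\Lambda$: linear independence matches each $\kappa_{\eta,\phi}$ with a twisted character $\kappa^{i}_{\zeta,\psi}$ with equal coefficient, evaluation at $(0,\varpi_j)$ gives $\eta+\phi=\zeta+\psi$, and then evaluation of the $\lambda$-slot at $\varpi_i$ together with the algebraic independence of $q$ and the $q_{ij}$ forces $\zeta+\psi=0$, whence $u\in U^0_\flat$. Your argument is internally coherent for the swapped statement you proved, but that statement is not the one Lemma~\ref{weyl group inv} asserts under the paper's convention, and the missing ingredient is precisely this joint-character plus algebraic-independence step.
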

	
	\begin{proof}
		Let $u=\sum_{\left( \eta, \phi \right)}{k_{\eta, \phi}\omega _{\eta}^{\prime}\omega _{\phi}}\in U^0$, then the assumption implies
		$
		\sum_{\left( \eta, \phi \right)}{k_{\eta, \phi}\varrho ^{\lambda, \mu}\left( \omega _{\eta}^{\prime}\omega _{\phi} \right)}=\sum_{\left( \zeta, \psi \right)}{k_{\zeta, \psi}\varrho ^{\sigma \left( \lambda \right), \mu}\left( \omega _{\zeta}^{\prime}\omega _{\psi} \right)}
		$,
		hence we have an equation for characters:
		\begin{equation*}
			\sum_{(\eta, \phi)} k_{\eta, \phi} \kappa_{\eta, \phi} = \sum_{(\zeta, \psi)} k_{\zeta, \psi} \kappa_{\zeta, \psi}^i.
		\end{equation*}
		For each $k_{\eta,\phi}\ne 0$, there exists a pair $(\zeta,\psi)\in Q \times Q$, such that $\kappa _{\eta, \phi}=\kappa _{\zeta, \psi}^{i}$ and $k_{\zeta,\psi}=k_{\eta,\phi}$. Then
		\begin{equation*}
			\kappa_{\eta, \phi}(0, \varpi_j) = \varrho^{0, \varpi_j}(\omega_{\eta}^{\prime}\omega_{\phi})
			=q^{(\eta + \phi, \varpi_j)}
			=\\{\kappa^i}_{\zeta, \psi}(0, \varpi_j) = \varrho^{0, \varpi_j}(\omega_{\zeta}^{\prime} \omega_{\psi})
			= q^{(\zeta + \psi, \varpi_j)},
		\end{equation*}
		yields $\eta+\phi=\zeta+\psi$, and
		$
		\varrho ^{\varpi _i}\left( \omega_{\eta}^{\prime}\omega _{\phi} \right) =\varrho ^{\sigma _i\left( \varpi _i \right)}\left( \omega _{\zeta}^{\prime}\omega _{\psi} \right) =\varrho ^{\varpi _i-\alpha _i}\left( \omega _{\zeta}^{\prime}\omega _{\psi} \right)
		$.
		Substituting $\phi=\zeta+\psi-\eta$, one has
		$
		\varrho ^{\varpi _i}\left( \omega _{\eta -\zeta}^{\prime}\omega _{\zeta -\eta} \right) \varrho ^{\alpha _i}\left( \omega _{\zeta}^{\prime}\omega _{-\zeta} \right) =\varrho ^{-\alpha _i}\left( \omega _{\psi +\zeta} \right)
		$,
		that is,
		\begin{equation*}
			q^{2(\zeta - \eta)^T DA\varpi_i-2\zeta^T DA\alpha_i}
			=
			q^{(\psi + \zeta)^T DA(-\alpha_i)} \left( \prod_{j=1}^n q_{ij}^{(\psi + \zeta)_i} \right).
		\end{equation*}
		For $i=1,...,n$, the algebraic independence of the parameters implies
		\begin{equation*}
			\begin{cases}
				2(\zeta - \eta)^T DA \varpi_i + (\psi - \zeta)^T DA \alpha_i = 0, \\
				(\psi + \zeta)_j = 0, \quad j \ne i.
			\end{cases}
		\end{equation*}
		So we have $\psi+\zeta=0$, which forces
		\begin{equation*}
			u = \sum_{(\eta, -\eta)} k_{\eta, -\eta} \, \omega_{\eta}^{\prime} \omega_{-\eta} \in U_\flat^0.
		\end{equation*}
		Finally, by Lemma \ref{weyl group act}, we get
		$
		\varrho ^{\lambda, \mu}\left( \sigma ^{-1}\left( u \right) \right) =\varrho ^{\sigma \left( \lambda \right), \mu}\left( u \right) =\varrho ^{\lambda, \mu}\left( u \right)
		$
		for all $\lambda,\mu \in \Lambda$ and $\sigma \in W$.
		It implies $\sigma^{-1}(u)=u$ for all $\sigma \in W$. So $u\in (U_\flat^0)^W$.
	\end{proof}
	
	\begin{theorem}\label{contain half1}
		$\varrho ^{\lambda +\rho, \mu}\left( \xi \left( z \right) \right) =\varrho ^{\sigma \left( \lambda +\rho \right), \mu}\left( \xi \left( z \right) \right)$,
		for all $z\in Z(U)$, $\sigma \in W$, which implies $Im(\xi)\subseteq (U_\flat^0)^W$.
	\end{theorem}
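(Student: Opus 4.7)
My plan follows the classical Harish-Chandra recipe: evaluate $z$ on a Verma module, compare its action at two highest-weight vectors, and translate the resulting central-character identity into the statement.

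I first decompose $z = \sum_{\nu\in Q^+}z_\nu$ with $z_\nu \in U^-_{-\nu}U^0U^+_\nu$, so that $\xi(z) = \gamma^{-\rho}(z_0)$. The multiplicativity $\varrho^\alpha\varrho^\beta = \varrho^{\alpha+\beta}$ combined with the defining rescaling of $\gamma^{-\rho}$ yields
\begin{equation*}
\varrho^{\lambda+\rho}(\xi(z)) = \varrho^\lambda(z_0), \qquad \lambda\in\Lambda.
\end{equation*}
For $\lambda\in\Lambda^+$ and a simple reflection $s_i$, set $n_i = \langle\lambda,\alpha_i^\vee\rangle$. Theorem \ref{trangular decom} gives $M(\lambda) \cong U^- \otimes V^\lambda$ as vector spaces, so $v := f_i^{n_i+1}v_\lambda \neq 0$; by Proposition \ref{irr mod}(1), $v$ lies in a proper submodule of $M(\lambda)$ and is therefore itself a highest-weight vector of weight $\lambda - (n_i+1)\alpha_i = s_i\cdot\lambda$ (using $s_i(\rho) = \rho - \alpha_i$). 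Centrality of $z$ makes $z$ act on $M(\lambda)$ by the scalar $\varrho^\lambda(z_0)$ inherited from $v_\lambda$, but the highest-weight property of $v$ equates this scalar with $\varrho^{s_i\cdot\lambda}(z_0)$. Hence $\varrho^\lambda(z_0) = \varrho^{s_i\cdot\lambda}(z_0)$ for all $\lambda\in\Lambda^+$.

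To extend to all $\lambda\in\Lambda$ and then upgrade to the asserted $(\lambda,\mu)$-identity, I write $z_0 = \sum c_{\eta,\phi}\omega'_\eta\omega_\phi$; by the expansion computed in the proof of Lemma \ref{iff zero}, each side of the preceding identity is a $\mathbb{K}$-linear combination of distinct characters $\nu\mapsto q^{(\phi-\eta)^T DA\nu}\prod q_{ij}^{-\nu_i(\eta+\phi)_j}$ of $\Lambda$, with distinctness guaranteed by the algebraic independence of $\{q\}\cup\{q_{ij}\mid i<j\}$. Agreement on the unbounded cone $\Lambda^+$ thus propagates to all of $\Lambda$ by a Dedekind/Laurent-polynomial density argument. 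Next, I match a character from the $(\eta,\phi)$-block on the left with one from $(\eta',\phi')$ on the right and separate the $q_{kj}$-exponents coordinate-wise in $\nu$; this yields the constraint $\delta_{kl}(\eta+\phi)_j = (s_i)_{kl}(\eta'+\phi')_j$ for all admissible $k,l,j$, and since a non-trivial simple reflection has off-diagonal entries in the $\alpha$-basis, it forces $\eta'+\phi' = 0$ and hence $\eta+\phi = 0$. Therefore $c_{\eta,\phi} = 0$ whenever $\eta+\phi\ne 0$, and so $\xi(z) \in U^0_\flat$.

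With $\xi(z)\in U^0_\flat$ established, every surviving monomial satisfies $\eta+\phi = 0$, making the factor $\varrho^{0,\mu}(\omega'_\eta\omega_{-\eta}) = q^{(0,\mu)} = 1$ trivial. The $(\lambda,\mu)$-identity $\varrho^{\lambda+\rho,\mu}(\xi(z)) = \varrho^{\sigma(\lambda+\rho),\mu}(\xi(z))$ then collapses to the $\mu$-free identity $\varrho^{\lambda+\rho}(\xi(z)) = \varrho^{\sigma(\lambda+\rho)}(\xi(z))$, which has already been proved for simple reflections and extends to arbitrary $\sigma\in W$ by iteration. Since $\lambda+\rho$ ranges over $\Lambda$ as $\lambda$ does, this is precisely the hypothesis of Lemma \ref{weyl group inv} for $u = \xi(z)$, yielding $\operatorname{Im}(\xi) \subseteq (U^0_\flat)^W$. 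The main technical obstacle is the signature-separation in the middle paragraph: one must exploit the algebraic independence of the enlarged parameter set $\{q\}\cup\{q_{ij}\mid i<j\}$ to rule out non-zero $\eta+\phi$ blocks, which is precisely the extra leverage encoded in Lemma \ref{nondeg of rho^{-,-}} and is why the argument proceeds uniformly here, avoiding the even-rank restriction of \cite{HW25} on $U_{r,s}(\mathfrak{g})$.
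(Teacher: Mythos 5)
Your opening paragraph --- the Verma-module linkage argument giving $\varrho^{\lambda+\rho}(\xi(z))=\varrho^{s_i(\lambda+\rho)}(\xi(z))$ for dominant $\lambda$ --- is the standard and correct starting point, and is essentially the strategy the paper imports from \cite[Thm.~26]{HW25}. The genuine gap is in your middle paragraph, where you try to extract $\xi(z)\in U^0_\flat$ from this \emph{single-variable} ($\mu$-free) identity alone. That cannot work. By the computation in Lemma \ref{iff zero}, the one-variable characters $\lambda\mapsto\varrho^{\lambda}(\omega'_\eta\omega_\phi)$ see $\eta-\phi$ through the exponent of $q$, but they see $\eta+\phi$ only through the antisymmetrized exponents $-\lambda_i(\eta+\phi)_j+\lambda_j(\eta+\phi)_i$ of the $q_{ij}$ with $i<j$. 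For $\mathfrak g=\mathfrak{sl}_2$ there are no such parameters, so $\varrho^{\lambda}(\omega'_\eta\omega_\eta)\equiv 1$ for every $\eta$ and every $\lambda$: the element $\omega'_{\alpha_1}\omega_{\alpha_1}$ is indistinguishable from $1$ by all one-variable central characters, yet it is not in $U^0_\flat$. Hence no manipulation of the $\mu$-free identity can rule out such terms (your claimed ``distinctness'' of the characters also fails there), and your subsequent reduction of the two-variable identity to the $\mu$-free one via ``$\varrho^{0,\mu}$ is trivial on $U^0_\flat$'' rests on an unestablished premise. Even for rank $\geq 2$, your exponent-matching equation $\delta_{kl}(\eta+\phi)_j=(s_i)_{kl}(\eta'+\phi')_j$ is not the correct one: it drops the partner term forced by $q_{jk}=q_{kj}^{-1}$ and the constant term contributed by the $\rho$-shift in $s_i\cdot\lambda$, so the conclusion $\eta'+\phi'=0$ is not actually derived as written.

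The information about $\eta+\phi$ enters only through the second variable, via $\varrho^{0,\mu}(\omega'_\eta\omega_\phi)=q^{(\eta+\phi,\mu)}$; this is precisely the leverage of Lemma \ref{nondeg of rho^{-,-}}, which you invoke rhetorically but never actually use, since your argument only ever varies $\lambda$. The paper's route is the reverse of yours: it first establishes the two-variable identity $\varrho^{\lambda+\rho,\mu}(\xi(z))=\varrho^{\sigma(\lambda+\rho),\mu}(\xi(z))$ directly (by running the Verma-module/linkage argument for the two-parameter family of highest weights $\varrho^{\lambda,\mu}$, as in \cite[Thm.~26]{HW25}), and only then applies Lemma \ref{weyl group inv}, whose proof evaluates the characters at $(0,\varpi_j)$ to pin down $\eta+\phi$ before exploiting the algebraic independence of the $q_{ij}$. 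To repair your proof you must prove the central-character identity with the $\mu$-dependence built in from the start, not bootstrap it afterwards from $U^0_\flat$-membership, which is itself a consequence of that stronger identity.
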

	
	\begin{proof}
		Let $z\in Z(U)$ and $\mu \in \Lambda$, take a $\lambda \in \Lambda$ such that $(\lambda,{\alpha _i}^{\vee})\geqslant 0$ for some fixed $i$. Let $v_{\lambda,\mu}$ be the highest weight vector of the Verma module $M(\varrho ^{\lambda, \mu}
		)$. Then
		\begin{equation*}
			z.v_{\lambda,\mu}=\varrho^{\lambda,\mu}(\pi(z))v_{\lambda,\mu}=\varrho^{\lambda+\rho,\mu}(\xi(z))v_{\lambda,\mu}.
		\end{equation*}
		On the other hand, according to \cite[Lem.~33]{PHR10}, we have
		\begin{equation*}
			e_i{f_i}^{\left( \lambda, \alpha _{i}^{\vee} \right) +1}.v_{\lambda, \mu}
			=\left[ (\left( \lambda, \alpha _{i}^{\vee} \right) +1) \right]_i
			f_{i}^{\left( \lambda, \alpha _{i}^{\vee} \right)}
			\frac{q_{i}^{-\left( \lambda, \alpha _{i}^{\vee} \right)}\omega_i - q_{i}^{\left( \lambda, \alpha _{i}^{\vee} \right)}\omega _{i}^{\prime}}{q_i-q_{i}^{-1}}.v_{\lambda, \mu} = 0,
		\end{equation*}
		since
		$q_{i}^{-\left( \lambda, \alpha _{i}^{\vee} \right)}\varrho ^{\lambda, \mu}\left( \omega _i \right)=q_{i}^{\left( \lambda, \alpha _{i}^{\vee} \right)}\varrho ^{\lambda, \mu}\left( \omega _{i}^{\prime} \right)$.
		It follows that $e_jf_{i}^{\left( \lambda, \alpha _{i}^{\vee} \right) +1}.v_{\lambda, \mu}=0$, $j=1,\cdots,n$, and
		\begin{equation*}
			\begin{split}
				z f_{i}^{\left( \lambda, \alpha _{i}^{\vee} \right) +1}.v_{\lambda, \mu}
				&= \pi(z) f_{i}^{\left( \lambda, \alpha _{i}^{\vee} \right) +1}.v_{\lambda, \mu}=\varrho ^{\sigma _i\left( \lambda +\rho \right) -\rho, \mu} \left( \pi \left( z \right) \right)  f_{i}^{\left( \lambda, \alpha _{i}^{\vee} \right) +1}.v_{\lambda, \mu}\\
				&= \varrho ^{\sigma _i\left( \lambda +\rho \right), \mu} \left( \xi \left( z \right) \right)  f_{i}^{\left( \lambda, \alpha _{i}^{\vee} \right) +1}.v_{\lambda, \mu}.
			\end{split}
		\end{equation*}
		Hence, $z$ acts on $M(\varrho ^{\lambda, \mu})$ by scalar
		$\varrho ^{\sigma _i\left( \lambda +\rho \right), \mu}\left( \xi \left( z \right) \right)$,
		so we have
		$\varrho ^{\lambda +\rho, \mu}\left( \xi \left( z \right) \right) =\varrho ^{\sigma _i\left( \lambda +\rho \right), \mu}\left( \xi \left( z \right) \right)$.
		
		Moreover, this result holds for any $\lambda \in \Lambda$. This is because if $\left( \lambda, \alpha _{i}^{\vee} \right)
		=-1$, then $\lambda+\rho=\sigma_i(\lambda^{\prime}+\rho)$ such that this equation holds. If $
		\left( \lambda, \alpha _{i}^{\vee} \right) <-1$, let $\lambda^{\prime}=\sigma_i(\lambda+\rho)-\rho$, then
		$
		\left( \lambda ^{\prime},\alpha _{i}^{\vee} \right) \geqslant 0
		$ such that the equation holds for $\lambda^\prime$. Replace $\lambda^\prime$ with $
		\sigma _i\left( \lambda +\rho \right) -\rho
		$ into the result such that the equation holds for $\lambda$ in this case. Finally, since the equation holds for each $\sigma_i$, so it holds for all $\sigma \in W$. It implies $Im(\xi)\subseteq (U_\flat^0)^W$ by Lemma 6.
	\end{proof}
	
	\section{Central elements and the Harish-Chandra theorem}
	
	\subsection{Rosso form realization and central elements} In what follows, we construct central elements by realizing the quantum trace on weight modules using the Rosso form.
	
	Since the Rosso form is nondegenerate and $\operatorname{ad}$-invariant, there is an injective morphism of $U$-module $\beta: U\rightarrow U^*,u\mapsto \langle\, u\mid-\,\rangle$, where the $U$-module structure on $U^*$ is defined by
	$
	(x.f)(v) = f\big( \operatorname{ad}_l(S(x)) v \big).
	$
	for all $x,v\in U,\;f\in U^*$.
	
	\begin{definition}
		For $\lambda \in \Lambda^+$, define the quantum trace $t_\lambda \in U^*$ by
		\begin{equation*}
			t_\lambda(v) := \operatorname{tr}_{L(\lambda)}(v \circ \Theta),
		\end{equation*}
		where $\Theta \in \operatorname{End}_\mathbb{K} (L(\lambda))$ is a linear map defined by
		$m\mapsto q^{-2{(\rho,\mu)}} m$, $\forall\; m\in L(\lambda)_\mu, \; \mu \in \Lambda$.
	\end{definition}
	
	Let $d = \dim L(\lambda)$, and choose a basis $\{ m_i \}_{i=1}^d$ of $L(\lambda)$ with the corresponding dual basis $\{ f_i \}_{i=1}^d \subset L(\lambda)^*$. Then, for any $v \in U$,
	\begin{equation*}
		v \cdot \Theta(m_i) = \sum_{j=1}^d f_j(v \cdot \Theta(m_i)) m_j = \sum_{j=1}^d C_{f_j, \Theta(m_i)}(v) m_j,
	\end{equation*}
	where $C_{f,m}\in U^*$ denotes the matrix coefficient satisfying $C_{f,m}(v)=f(v.m)$, for all $v\in U$. The quantum trace can then be expressed as
	\begin{equation*}
		t_\lambda(v) = \operatorname{tr}_{L(\lambda)}(v \circ \Theta) = \sum_{i=1}^d C_{f_i, \Theta(m_i)}(v)
	\end{equation*}
	and the matrix coefficient can be realized by the Rosso form.
	
	\begin{proposition}
		If the weight set of a finite-dimensional weight module $M$ of $U$ satisfies $\operatorname{wt}(M)\subseteq Q$, then for each $m \in M$, $f \in M^*$, there exists a unique element $u \in U$ such that $C_{f,m}(v)=\langle\, u \mid v\, \rangle$, $\forall \; v \in U$.
	\end{proposition}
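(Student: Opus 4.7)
The plan is to prove uniqueness and existence separately. Uniqueness is immediate: the Rosso form is nondegenerate by Theorem~\ref{Rosso nondeg}, so the $U$-module map $\beta : U \to U^*$, $u \mapsto \langle u \mid - \rangle$ is injective, and hence any element representing $C_{f,m}$ is determined uniquely.

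For existence, I will reduce to the weight-vector case: by bilinearity in $(f,m)$, it suffices to treat $m \in M_\eta$ and $f \in M^*$ supported on $M_\zeta$ for some $\eta, \zeta \in Q$. For a graded element $v = ytx$ with $y \in U^-_{-\nu}$, $t \in U^0$, $x \in U^+_\mu$, the product $v.m$ lies in $M_{\eta + \mu - \nu}$, so $C_{f,m}(v) = 0$ unless $\mu - \nu = \zeta - \eta$. Since $M$ is finite-dimensional with $\operatorname{wt}(M) \subseteq Q$, both $\eta + \mu$ and $\eta + \mu - \nu$ must be weights of $M$, forcing the admissible $(\mu, \nu) \in Q^+ \times Q^+$ to range over a finite set $\mathcal{S}$. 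By the orthogonality of the Rosso form (Proposition~\ref{graded Rosso}(2)), it suffices to construct, for each pair in $\mathcal{S}$, an element $u_{\mu,\nu} \in U^-_{-\mu} U^0 U^+_\nu$ whose Rosso pairing with $U^-_{-\nu} U^0 U^+_\mu$ reproduces the restriction of $C_{f,m}$; the desired $u$ is then $\sum_{(\mu,\nu)\in \mathcal{S}} u_{\mu,\nu}$.

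For the construction on a fixed graded piece, I will pick dual bases $\{x^\nu_i\}$ of $U^+_\nu$ and $\{y^\nu_i\}$ of $U^-_{-\nu}$ with respect to the Drinfeld pairing $\langle -,- \rangle$, and analogously $\{x^\mu_l\},\{y^\mu_l\}$ for $\mu$. Following the explicit form of the Rosso pairing in the definition, I will take the ansatz
\[
u_{\mu,\nu} = \sum_{i,l} c_{il}\, y^\mu_l\, \bigl(\omega'_{\mu}\, \omega'_{\eta+\mu} \omega_{-(\eta+\mu)}\bigr)\, x^\nu_i,
\]
so that pairing with an arbitrary $y^\nu_k\, \omega'_{\eta_2}\omega_{\phi_2}\, x^\mu_p$ collapses via $\langle y^\nu_k, x^\nu_i\rangle = \delta_{ki}$ and $\langle y^\mu_l, x^\mu_p\rangle = \delta_{lp}$, reducing the question to matching the $U^0$-character that records the scalar action of $t$ on the weight space $M_{\eta+\mu}$. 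The coefficients $c_{il}$ will then be forced to equal $f(y^\mu_l x^\nu_i .m)$ up to the explicit factor $q^{2(\rho,\mu)}$ coming from the Rosso form.

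The principal obstacle is precisely this $U^0$-matching: although $U^0$ is infinite-dimensional, we must represent the character $t \mapsto \varrho^{\eta+\mu}(t)$ by a Rosso pairing against a single element of $U^0$. The resolution rests on the explicit formulas for $\varrho^{\lambda}(\omega_j), \varrho^{\lambda}(\omega'_j)$ and the basic pairings $\langle \omega'_i,\omega_j\rangle = q_j^{a_{ji}}q_{ji}$: by the algebraic independence of $\{q\}\cup\{q_{ij}\mid i<j\}$ (the same input driving Lemma~\ref{nondeg of rho^{-,-}}), the exponent-matching system has the unique solution $\eta_1 = \eta+\mu$, $\phi_1 = -(\eta+\mu)$, legitimizing the ansatz above. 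Assembling over $(\mu,\nu)\in\mathcal{S}$ produces the desired $u \in U$, completing existence.
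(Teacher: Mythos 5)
Your proposal is correct and follows essentially the same route as the paper: reduce to weight vectors, use orthogonality of the Rosso form together with finite-dimensionality of $M$ to restrict to finitely many graded pieces, and on each piece build the representative from dual bases with $U^0$-part $\omega'_{\eta+\mu}\omega_{-(\eta+\mu)}$, whose Rosso pairing reproduces exactly the character $\varrho^{\eta+\mu}$ by which $U^0$ acts on $M_{\eta+\mu}$ (this identity follows directly from the definition of $\varrho^{\lambda}$ via $\varrho^{\lambda}(\omega_j)=\langle\omega'_{\lambda},\omega_j\rangle$ and $\varrho^{\lambda}(\omega'_j)=\langle\omega'_j,\omega_{-\lambda}\rangle$, so the appeal to algebraic independence is not actually needed for existence). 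The only blemish is an index slip in the final coefficient, which should be $f(y^{\nu}_{i}x^{\mu}_{l}.m)$ with $y^{\nu}_{i}\in U^-_{-\nu}$ and $x^{\mu}_{l}\in U^+_{\mu}$ (matching the grading of the test vectors, up to the $q^{2(\rho,\mu)}$ normalization and the scalar coming from the $\omega'_{\nu}$-shift in writing the test vector in the normalized form required by the Rosso pairing), rather than $f(y^{\mu}_{l}x^{\nu}_{i}.m)$.
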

	
	\begin{proof}
		It suffices to verify the case when $m \in M_\lambda$, since $C_{f,m}$ is linear in $m \in M$.
		Suppose $v$ is a monomial of the form
		$v = (y \, {\omega'_\mu}^{-1}) \omega'_\eta \omega_\phi x$, where
		$y \in U^-_{-\mu},\; x \in U^+_\nu$.
		Then for any $f \in M^*$, we have
		\begin{equation*}
			\begin{split}
				C_{f,m}(v)
				&= C_{f,m}\big((y {\omega'_\mu}^{-1})\omega'_\eta \omega_\phi x\big) \\
				&= \langle \omega'_\eta,  \omega_{-(\lambda+\nu)} \rangle \langle \omega'_{\lambda+\nu},  \omega_\phi \rangle \cdot f\big((y {\omega'_\mu}^{-1}) x \cdot m\big).
			\end{split}
		\end{equation*}
		Noting the bilinearity of
		$
		\varPsi: U^-_{-\mu} \times U^+_\nu \to \mathbb{K},\; (y,x) \mapsto f\big((y {\omega'_\mu}^{-1}) x \cdot m\big)
		$
		, we claim that for a given $\varPsi$ and a pair $(\eta,\phi)\in Q\times Q$, there exists an element $u\in U_{-\mu}^{-}U^0U_{v}^{+}$ such that
		\begin{equation*}
			\langle\, u\mid (y \omega_{\mu}^{\prime -1}) \, \omega_{\zeta}^{\prime} \omega_{\psi} x \,\rangle
			= \langle \omega_{\zeta}^{\prime}, \omega_{\phi} \rangle \, \langle \omega_{\eta}^{\prime}, \omega_{\psi} \rangle \, \varPsi(y, x)
		\end{equation*}
		for any $x\in U_v^+,y\in U_{-\mu}^-$ and $(\zeta,\psi)\in Q \times Q$. One can easily check that
		\begin{equation*}
			u= \sum_{i,j} q^{-2(\rho, \nu)} \varPsi \left( v_{j}^{\mu}, u_{i}^{\nu} \right) \left( v_{i}^{\nu} \omega_{\nu}^{\prime -1} \right) \omega_{\eta}^{\prime} \omega_{\phi} u_{j}^{\mu},
		\end{equation*}
		where $\left\{ u_{1}^{\mu},...,u_{d_{\mu}}^{\mu} \right\}$ is a basis of $U_{\mu}^{+}$ with its dual basis $\left\{ v_{1}^{\mu},...,v_{d_{\mu}}^{\mu} \right\} $ in $U_{-\mu}^{-}$ with respect to $\langle -,-\rangle$.
		Therefore, there exists a unique $u_{\nu\mu} \in U$ such that
		$
		C_{f,m}(v) = \langle\, u_{\nu\mu} \mid v \,\rangle, \,\forall\; v \in U^-_{-\mu} U^0 U^+_\nu.
		$
		
		Now consider a general element $v \in U$ and write it as
		$v = \sum_{(\mu,\nu)} v_{\mu\nu}$, where $v_{\mu\nu} \in U^-_{-\mu} U^0 U^+_\nu$.
		Since $M$ is finite-dimensional, there exists a finite subset $\Omega \subset Q \times Q$ such that
		$C_{f,m}(v) = C_{f,m} \left( \sum_{(\mu,\nu) \in \Omega} v_{\mu\nu} \right)$
		for all $v \in U$.
		Define $u := \sum_{(\mu,\nu) \in \Omega} u_{\nu\mu}$,
		then we have
		\begin{equation*}
			\langle\, u \mid v \,\rangle
			= \sum_{(\mu,\nu) \in \Omega} \langle\, u_{\nu\mu} \mid v\, \rangle
			= \sum_{(\mu,\nu) \in \Omega} \langle\, u_{\nu\mu} \mid v_{\mu\nu}\, \rangle
			= C_{f,m}(v).
		\end{equation*}
		
		This proves the existence of $u$ and provides an explicit construction.
	\end{proof}
	
	\begin{theorem}\label{form of z}
		For each $\lambda \in \Lambda^+ \bigcap Q$, there exists a unique  element $z_\lambda$ such that $\beta(z_\lambda) = t_\lambda$.
		The explicit expression of $z_\lambda$ is
		\begin{equation*}
			z_\lambda = \sum_{\tau \leqslant \lambda}
			\sum_{\mu \in Q^+}
			\sum_{i,j} q^{-2(\rho,\tau+\mu)}
			\langle {\omega'_\mu}, \omega_{\tau+\mu} \rangle
			tr(v^\mu_j u^\mu_i \circ P_\tau) \hspace{0.1cm}
			v^\mu_i \omega'_{\tau} \omega^{-1}_{\tau+\mu} u^\mu_j.
		\end{equation*}
		where $\{u^\mu_j\}_{j=1}^{d_\mu}$ is a basis of $U^+_\mu$ and $\{v^\mu_i\}_{i=1}^{d_\mu}$ is the dual basis of $U^-_{-\mu}$ with respect to the restriction of $\langle-,- \rangle$ to $U^-_{-\mu} \times U^+_{\mu}$, and $P_\tau$ is the projector from $L(\lambda)$ to $L(\lambda)_\tau$.
	\end{theorem}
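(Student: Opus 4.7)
The plan is to establish uniqueness from the nondegeneracy of the Rosso form, and to produce $z_\lambda$ by applying the preceding matrix-coefficient realization to each term in the weight-space decomposition of the quantum trace $t_\lambda$. Uniqueness is immediate: Theorem \ref{Rosso nondeg} implies $\beta$ is injective, so at most one $z_\lambda$ can satisfy $\beta(z_\lambda)=t_\lambda$.

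For existence, I would fix a weight basis of $L(\lambda)=\bigoplus_{\tau\leqslant\lambda}L(\lambda)_\tau$ (Proposition \ref{irr mod}(1)), writing $\{m^\tau_k\}_k$ for a basis of $L(\lambda)_\tau$ and $\{f^\tau_k\}_k$ for the corresponding dual basis. Since $\Theta$ acts on $L(\lambda)_\tau$ by $q^{-2(\rho,\tau)}\mathrm{id}$, we have
\begin{equation*}
t_\lambda(v)=\sum_{\tau\leqslant\lambda}q^{-2(\rho,\tau)}\sum_{k}C_{f^\tau_k,\,m^\tau_k}(v).
\end{equation*}
The hypothesis $\lambda\in Q$ guarantees $\mathrm{wt}(L(\lambda))\subseteq\lambda-Q^+\subseteq Q$, so the preceding proposition applies and yields $u_{\tau,k}\in U$ with $C_{f^\tau_k,m^\tau_k}(v)=\langle u_{\tau,k}\mid v\rangle$; setting $z_\lambda:=\sum_{\tau}q^{-2(\rho,\tau)}\sum_{k}u_{\tau,k}$ gives $\beta(z_\lambda)=t_\lambda$.

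For the explicit formula, I would unfold the proof of the preceding proposition. By the orthogonality in Proposition \ref{graded Rosso}(2), only the weight-preserving components $v\in U^-_{-\mu}U^0 U^+_\mu$ contribute to each matrix coefficient $C_{f^\tau_k,m^\tau_k}$. For such a $v=(y{\omega'_\mu}^{-1})\omega'_\zeta\omega_\psi x$, comparing
\begin{equation*}
C_{f^\tau_k,\,m^\tau_k}(v)=\langle\omega'_\zeta,\omega_{-(\tau+\mu)}\rangle\,\langle\omega'_{\tau+\mu},\omega_\psi\rangle\,\Psi(y,x)
\end{equation*}
with the form produced by the proposition forces the parameter choice $(\eta,\phi)=(\tau+\mu,\,-(\tau+\mu))$, so the Cartan factor of each summand reduces to $\omega_\mu^{\prime -1}\omega'_{\tau+\mu}\omega_{-(\tau+\mu)}=\omega'_\tau\omega^{-1}_{\tau+\mu}$. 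The Cartan action of ${\omega'_\mu}^{-1}$ on $u^\mu_i\cdot m^\tau_k\in L(\lambda)_{\tau+\mu}$ contributes the scalar $\langle\omega'_\mu,\omega_{\tau+\mu}\rangle$ to $\Psi(v^\mu_j,u^\mu_i)$, and summing over $k$ converts $\sum_k f^\tau_k(v^\mu_j u^\mu_i\cdot m^\tau_k)$ into $\mathrm{tr}(v^\mu_j u^\mu_i\circ P_\tau)$. Multiplying the $q^{-2(\rho,\tau)}$ from $\Theta$ by the $q^{-2(\rho,\mu)}$ from the proposition then yields the prefactor $q^{-2(\rho,\tau+\mu)}$, producing the stated expression. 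The main technical obstacle is this bookkeeping of Cartan scalars---aligning the proposition's parameters $(\eta,\phi)$ with the weight $\tau+\mu$ of each matrix coefficient and carefully accounting for the ${\omega'_\mu}^{-1}$-twist embedded in $\Psi$---so that the final Cartan factor lands precisely as $\omega'_\tau\omega^{-1}_{\tau+\mu}$ and the prefactors combine correctly.
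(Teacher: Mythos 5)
Your proof is correct and follows exactly the route the paper sets up (the paper itself gives no written proof, relying on the decomposition $t_\lambda=\sum_i C_{f_i,\Theta(m_i)}$ and the matrix-coefficient realization proposition immediately preceding the theorem); your choice $(\eta,\phi)=(\tau+\mu,-(\tau+\mu))$, the extraction of the scalar $\langle\omega'_\mu,\omega_{\tau+\mu}\rangle$ from the ${\omega'_\mu}^{-1}$-twist, the reduction of the Cartan factor to $\omega'_\tau\omega^{-1}_{\tau+\mu}$, and the combined prefactor $q^{-2(\rho,\tau+\mu)}$ all check out. The only slight imprecision is attributing the restriction to the diagonal components $U^-_{-\mu}U^0U^+_\mu$ to the orthogonality of the Rosso form, whereas the direct reason is that $C_{f^\tau_k,m^\tau_k}$ vanishes on $U^-_{-\mu'}U^0U^+_\nu$ for $\mu'\neq\nu$ because the module action shifts weights; the conclusion is unaffected.
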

	
	\begin{remark}
		The explicit expression of central elements $z_\lambda$ in the one-parameter quantum groups $U_q(\mathfrak{g})$
		(see, e.g. \cite{D23,J96}) corresponds to the specialization of $U_{q,G}$ where all $q_{ij}=1,\; \omega_i = K_i,\; \omega'_i= K_i^{-1}$.
	\end{remark}
	
	We proceed to show that these elements $\{z_\lambda\}_{\lambda \in \Lambda^+ \cap Q}$ lie in the centre $Z(U)$.
	
	\begin{lemma} \label{in ZU if and only if}
		Let $z\in U$, then $z\in Z(U)$ if and only if $ad_l(x)z=\varepsilon (x)z$, for all $x\in U$.
	\end{lemma}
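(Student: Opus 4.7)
The plan is to treat this as a purely Hopf-algebraic identity, using only the axioms of the Hopf algebra $(U,\Delta,\varepsilon,S)$; no feature specific to $U_{q,G}(\mathfrak{g})$ is needed. Recall that the left adjoint action is $ad_l(x)(z) = \sum x_{(1)}\, z\, S(x_{(2)})$ in Sweedler notation.

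For the necessity direction, I would assume $z\in Z(U)$ and simply pull $z$ out to the left: since $z$ commutes with each $x_{(1)}$,
\begin{equation*}
ad_l(x)(z) \;=\; \sum x_{(1)}\, z\, S(x_{(2)}) \;=\; z \sum x_{(1)} S(x_{(2)}) \;=\; \varepsilon(x)\, z,
\end{equation*}
where the last equality is the antipode axiom $m\circ (\mathrm{id}\otimes S)\circ \Delta = \eta\circ \varepsilon$.

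For the sufficiency direction, the key identity is that every element $x \in U$ can be recovered from the adjoint via
\begin{equation*}
xz \;=\; \sum\bigl(ad_l(x_{(1)})(z)\bigr)\, x_{(2)}\qquad \text{for all } z \in U.
\end{equation*}
To derive this, I would expand the right-hand side as $\sum x_{(1)(1)}\, z\, S(x_{(1)(2)})\, x_{(2)}$, apply coassociativity $\sum x_{(1)(1)}\otimes x_{(1)(2)}\otimes x_{(2)} = \sum x_{(1)}\otimes x_{(2)(1)}\otimes x_{(2)(2)}$ to reindex as $\sum x_{(1)}\, z\, S(x_{(2)(1)})\, x_{(2)(2)}$, and then use the antipode axiom on the last two tensor factors, collapsing $\sum S(x_{(2)(1)})\, x_{(2)(2)} = \varepsilon(x_{(2)})\cdot 1$; the counit axiom then yields $xz$. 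Substituting the hypothesis $ad_l(x_{(1)})(z)=\varepsilon(x_{(1)})\,z$ into the identity gives
\begin{equation*}
xz \;=\; \sum \varepsilon(x_{(1)})\, z\, x_{(2)} \;=\; z \sum \varepsilon(x_{(1)})\, x_{(2)} \;=\; z x,
\end{equation*}
so $z$ commutes with every $x\in U$ and hence lies in $Z(U)$.

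There is no real obstacle here; the only mildly delicate point is the manipulation with coassociativity and the antipode in the sufficiency direction, which must be executed carefully in Sweedler notation to avoid sign or order errors. The argument is self-contained and independent of the multi-parameter data $G=(q_{ij})$, which is why the lemma can be invoked as a general tool in the subsequent construction of the central elements $z_\lambda$.
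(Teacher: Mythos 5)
Your proof is correct. The necessity direction is identical to the paper's: pull $z$ through $x_{(1)}$ and apply the antipode axiom. For sufficiency, however, you take a genuinely different and more general route. The paper verifies commutation only on the generators, computing $\operatorname{ad}_l(\omega_i)z$, $\operatorname{ad}_l(e_i)z$ and $\operatorname{ad}_l(f_i)z$ explicitly from the coproduct and antipode formulas of $U_{q,G}$ (e.g.\ $\operatorname{ad}_l(e_i)z = e_i z + \omega_i z S(e_i) = e_i z - z e_i$, using $S(e_i)=-\omega_i^{-1}e_i$ and the already-established fact that $z$ commutes with $\omega_i$), and then concludes since the generators generate $U$. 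You instead prove the purely Hopf-algebraic identity $xz = \sum \bigl(\operatorname{ad}_l(x_{(1)})z\bigr)x_{(2)}$ via coassociativity and the antipode axiom, and substitute the hypothesis to get $xz = zx$ for all $x$ at once. Your computation checks out: the reindexing $\sum x_{(1)(1)}\otimes x_{(1)(2)}\otimes x_{(2)} = \sum x_{(1)}\otimes x_{(2)(1)}\otimes x_{(2)(2)}$ and the collapse $\sum S(x_{(2)(1)})x_{(2)(2)} = \varepsilon(x_{(2)})1$ are exactly right. Your argument buys generality (it works verbatim in any Hopf algebra and needs no knowledge of the generators or of the specific $\Delta$ and $S$), while the paper's generator check is more concrete and incidentally shows that the hypothesis need only be imposed on a generating set; both are complete proofs.
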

	
	\begin{proof}
		Suppose $z\in Z(U)$. Then for all $x\in U$, we have
		\begin{equation*}
			\operatorname{ad}_l(x)z = \sum_{(x)} x_{(1)} z S\bigl(x_{(2)}\bigr)
			= z \sum_{(x)} x_{(1)} S\bigl(x_{(2)}\bigr)
			= \varepsilon(x) z.
		\end{equation*}
		Conversely, if $\operatorname{ad}_l(x)z=\varepsilon(x)z$ holds for all $x\in U$, then
		$
		\omega_i z \omega_i^{-1} = \operatorname{ad}_l(\omega_i) z = \varepsilon(\omega_i) z = z.
		$
		For each generator $e_i$ and $f_i$ of $U$, we have
		\begin{equation*}
			\begin{split}
				e_i z - z e_i
				&= e_i z + \omega_i z S(e_i)
				= \operatorname{ad}_l(e_i) z  = \varepsilon(e_i) z = 0,\\
				\left( f_i z - z f_i \right) {\omega_i'}^{-1}
				&= z S(f_i) + f_i z S(\omega_i')
				= \operatorname{ad}_l(f_i) z
				= \varepsilon(f_i) z =0.
			\end{split}
		\end{equation*}
		
		This completes the proof.
	\end{proof}
	
	\begin{proposition}
		The following statements hold for the weight modules $L(\lambda)$ with $\lambda \in \Lambda^+$.
		\begin{enumerate}
			\item For all $u\in U$, we have $\Theta(u.m) = S^2(u).\Theta(m)$, $\forall\; m\in L(\lambda)$, or $\Theta \circ u=S^2(u) \circ \Theta$ for short.
			\item For all $x\in U$, we have $x \cdot t_\lambda = \varepsilon(x) \; t_\lambda$.
			\item If $\lambda \in \Lambda^+ \cap Q$, then $z_\lambda = \beta^{-1}(t_\lambda) \in Z(U)$.
		\end{enumerate}
	\end{proposition}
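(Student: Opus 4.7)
The plan is to prove the three parts in sequence, using each to bootstrap the next.

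For part (1), I would argue one weight space at a time. Both $\Theta$ and $S^2$ are scalar with respect to the $Q$-grading: $\Theta$ acts on $L(\lambda)_\mu$ by $q^{-2(\rho,\mu)}$, and a direct computation on the generators (using (S1), (A2) and the fact that $S^2$ is an algebra endomorphism) shows that $S^2$ acts on the homogeneous component $U_\beta$ as multiplication by $q^{-2(\rho,\beta)}$. For a weight vector $m \in L(\lambda)_\mu$ and homogeneous $u \in U_\beta$, both $\Theta(u.m)$ and $S^2(u).\Theta(m)$ will then equal $q^{-2(\rho,\mu+\beta)}\,u.m$, and the identity extends by linearity.

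For part (2), I would unpack the definition $(x.t_\lambda)(v) = t_\lambda(\operatorname{ad}_l(S(x)) v)$ and apply $\Delta(S(x)) = \sum S(x_{(2)}) \otimes S(x_{(1)})$ to rewrite $\operatorname{ad}_l(S(x))\,v = \sum_{(x)} S(x_{(2)})\,v\,S^2(x_{(1)})$. Substituting this into the trace defining $t_\lambda$ and using part (1) to push $\Theta$ through $S^2(x_{(1)})$, the expression becomes $\sum \operatorname{tr}_{L(\lambda)}\bigl(S(x_{(2)})\,v\,\Theta\,x_{(1)}\bigr)$. Cyclicity of the trace then moves $x_{(1)}$ to the left, and the antipode axiom $\sum x_{(1)} S(x_{(2)}) = \varepsilon(x)\,1$ collapses the sum to $\varepsilon(x)\,t_\lambda(v)$.

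For part (3), I would observe that $\beta$ is an injective $U$-module homomorphism: the $\operatorname{ad}_l$-invariance of the Rosso form (Proposition \ref{graded Rosso}(1)) yields $\beta(\operatorname{ad}_l(x) u)(v) = \langle \operatorname{ad}_l(x) u \mid v\rangle = \langle u \mid \operatorname{ad}_l(S(x)) v\rangle = (x.\beta(u))(v)$, while injectivity comes from Theorem \ref{Rosso nondeg}. Combining $\beta(z_\lambda) = t_\lambda$ from Theorem \ref{form of z} with part (2) gives $\beta(\operatorname{ad}_l(x)\,z_\lambda) = \varepsilon(x)\,\beta(z_\lambda)$, and injectivity of $\beta$ together with Lemma \ref{in ZU if and only if} then finishes the argument.

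The most delicate step is the scalar behaviour of $S^2$ in part (1); one has to confirm that the multi-parameter commutation relations involving both families $\omega_i,\omega'_i$ interact correctly so that $S^2$ really acts as $q^{-2(\rho,\beta)}$ on $U_\beta$. Once this is checked on the generators, the remainder of the proof is routine Hopf-algebraic bookkeeping combined with the non-degeneracy and $\operatorname{ad}$-invariance of the Rosso form.
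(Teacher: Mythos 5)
Your proposal is correct and takes essentially the same route as the paper: part (1) reduces to the same generator computation ($S^2(e_i)=\langle\omega_i',\omega_i\rangle^{-1}e_i$ and $S^2(f_i)=\langle\omega_i',\omega_i\rangle f_i$, which your graded statement $S^2|_{U_\beta}=q^{-2(\rho,\beta)}\,\mathrm{id}$ merely repackages), part (2) is the same trace--antipode manipulation (the paper evaluates $S^{-1}(x)\cdot t_\lambda$ and then substitutes $S(x)$ for $x$, a cosmetic difference from your direct computation), and part (3) is identical. No gaps.
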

	
	\begin{proof}
		(1) It is sufficient to verify the statement on the generators $e_i$ and $f_i$. Notice that
		\begin{equation*}
			\langle \omega'_i, \omega_i \rangle
			= a_{ii} = q^{2d_i}
			= q^{2(\varpi_i, \alpha_i)} = q^{2(\rho, \alpha_i)}.
		\end{equation*}
		Now, for each $m \in L(\lambda)_\mu$, we compute
		\begin{equation*}
			\begin{split}
				S^2(e_i) \cdot \Theta(m)
				&= q^{-2(\rho, \mu)} \cdot S^2(e_i) \cdot m
				= q^{-2(\rho, \mu)} \cdot \langle \omega'_i, \omega_i \rangle^{-1} e_i \cdot m \\
				&= q^{-2(\rho, \mu + \alpha_i)} e_i \cdot m
				= \Theta(e_i \cdot m), \\
				S^2(f_i) \cdot \Theta(m)
				&= S^2(f_i) \cdot q^{-2(\rho, \mu)} m
				= \langle \omega'_i, \omega_i \rangle \cdot q^{-2(\rho, \mu)} f_i \cdot m \\
				&= q^{-2(\rho, \mu - \alpha_i)} f_i \cdot m
				= \Theta(f_i \cdot m).
			\end{split}
		\end{equation*}
		
		(2) For any $x, u \in U$, we compute
		\begin{equation*}
			\begin{split}
				(&S^{-1}(x) \cdot t_\lambda)(u)
				= t_\lambda(\operatorname{ad}_l(x) u) \hspace{0.1cm}
				= \operatorname{tr}_{L(\lambda)}\big(\sum_{(x)} x_{(1)} u S(x_{(2)}) \circ \Theta \big) \\
				&= \operatorname{tr}_{L(\lambda)}\big(u \cdot \sum_{(x)} S(x_{(2)}) \Theta x_{(1)} \big)
				= \operatorname{tr}_{L(\lambda)}\big(u \cdot \sum_{(x)} S(x_{(2)}) S^2(x_{(1)}) \circ \Theta \big) \\
				&= \operatorname{tr}_{L(\lambda)}\big(u \cdot S(\sum_{(x)} S(x_{(1)}) x_{(2)}) \circ \Theta \big)
				= \varepsilon(x) \operatorname{tr}_{L(\lambda)}(u \circ \Theta) = \varepsilon(x) t_\lambda(u).
			\end{split}
		\end{equation*}
		Replacing $x$ by $S(x)$, we get
		$x \cdot t_\lambda = \varepsilon(x) t_\lambda$ for all $x \in U$.

		(3) We compute that
		\begin{equation*}
			\begin{split}
				x \cdot t_\lambda
				&= x \cdot \beta(z_\lambda) = (x \cdot \beta)(z_\lambda) = \beta(\operatorname{ad}_l(x) z_\lambda), \\
				\varepsilon(x) t_\lambda
				&= \varepsilon(x) \beta(z_\lambda) = \beta(\varepsilon(x) z_\lambda).
			\end{split}
		\end{equation*}
		By the injectivity of $\beta$, it follows that
		$\operatorname{ad}_l(x)(z_\lambda) = \varepsilon(x) z_\lambda$ for all $x \in U$, which implies $z_\lambda \in Z(U)$ by Lemma \ref{in ZU if and only if}.
	\end{proof}
	
	\subsection{The Harish-Chandra theorem}\label{HC}
	
	Set $\Psi = \Lambda^+ \cap Q$. For each $\eta \in Q$, there exists a unique $\sigma \in W$ such that $\sigma(\eta) \in \Psi$. Therefore, the elements
	\begin{equation*}
		\operatorname{av}(\lambda) = \frac{1}{|W|} \sum_{\sigma \in W} \omega'_{\sigma(\lambda)} \omega_{-\sigma(\lambda)} \quad (\lambda \in \Psi)
	\end{equation*}
	form a basis of the $W$-invariant subalgebra $(U^0_\flat)^W$.
	
	\begin{theorem}\label{iso}
		The Harish-Chandra homomorphism $\xi:$ $Z(U)\rightarrow (U_\flat^0)^W$ is an algebra isomorphism.
	\end{theorem}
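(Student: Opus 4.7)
The plan is to deduce the theorem from a single surjectivity claim. From Theorem \ref{injective} and Theorem \ref{contain half1} we already have $\xi: Z(U) \to (U^0_\flat)^W$ injective with image contained in $(U^0_\flat)^W$, so it only remains to show that the central elements $z_\lambda$ constructed in Theorem \ref{form of z} have images $\xi(z_\lambda)$ that span $(U^0_\flat)^W$ as $\lambda$ runs over $\Lambda^+ \cap Q$.

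The first step is to compute $\xi(z_\lambda) = \gamma^{-\rho}(\pi(z_\lambda))$ explicitly. Applying $\pi$ to the formula for $z_\lambda$ from Theorem \ref{form of z} kills every summand with $\mu > 0$, since those terms live in $U^-_{-\mu} U^0 U^+_\mu$, so only the $\mu = 0$ contribution survives and collapses the triple sum to
\begin{equation*}
\pi(z_\lambda) = \sum_{\tau \leqslant \lambda} q^{-2(\rho, \tau)} \dim L(\lambda)_\tau \cdot \omega'_\tau \omega_{-\tau}.
\end{equation*}
Next, applying $\gamma^{-\rho}$ multiplies each $\omega'_\tau \omega_{-\tau}$ by $\varrho^{-\rho}(\omega'_\tau \omega_{-\tau})$. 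Using the character formulas established inside the proof of Lemma \ref{iff zero} with $(\eta, \phi) = (\tau, -\tau)$, the multi-parameter factor $\prod_{i,j} q_{ij}^{-\lambda_i (\eta + \phi)_j}$ collapses to $1$ (since $\eta + \phi = 0$) while the remaining $q$-factor equals exactly $q^{2(\rho, \tau)}$. This cancels the scalar in front and produces the clean identity
\begin{equation*}
\xi(z_\lambda) = \sum_{\tau \leqslant \lambda} \dim L(\lambda)_\tau \cdot \omega'_\tau \omega_{-\tau}.
\end{equation*}

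The second step exploits the Weyl symmetry $\dim L(\lambda)_\tau = \dim L(\lambda)_{\sigma(\tau)}$ from Proposition \ref{irr mod}(3). Each $W$-orbit among the weights of $L(\lambda)$ has a unique dominant representative $\mu \in \Lambda^+ \cap Q$ with $\mu \leqslant \lambda$, and regrouping the sum by orbits produces
\begin{equation*}
\xi(z_\lambda) = \sum_{\substack{\mu \in \Lambda^+ \cap Q \\ \mu \leqslant \lambda}} c_{\lambda, \mu}\, \operatorname{av}(\mu),
\end{equation*}
where each $c_{\lambda, \mu}$ is a positive integer multiple of $\dim L(\lambda)_\mu$ coming from the orbit-stabilizer formula. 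Since $\dim L(\lambda)_\lambda = 1$, the diagonal coefficient $c_{\lambda, \lambda}$ is nonzero, so the matrix expressing $\{\xi(z_\lambda)\}$ in the basis $\{\operatorname{av}(\mu)\}$ of $(U^0_\flat)^W$ is upper triangular with invertible diagonal with respect to the dominance order on $\Lambda^+ \cap Q$. Therefore $\{\xi(z_\lambda)\}$ spans $(U^0_\flat)^W$, which gives surjectivity.

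Combined with the injectivity from Theorem \ref{injective}, this proves $\xi: Z(U) \to (U^0_\flat)^W$ is a bijective algebra homomorphism, hence an isomorphism. The main obstacle is the clean cancellation between the explicit $q$-coefficients of $z_\lambda$ and the twist $\gamma^{-\rho}$: once the identity $\xi(z_\lambda) = \sum_\tau \dim L(\lambda)_\tau \cdot \omega'_\tau \omega_{-\tau}$ is in hand, the triangularity argument and the basis conclusion are routine. The cancellation is structural, stemming from the fact that $\omega'_\tau \omega_{-\tau}$ has trivial total weight $\eta + \phi = 0$, which is precisely why the abundance of algebraically independent parameters $\{q_{ij}\}$ does not obstruct the Harish-Chandra isomorphism for \emph{any} type.
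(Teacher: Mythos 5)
Your proposal is correct and follows essentially the same route as the paper: compute $\xi(z_\lambda)=\sum_{\tau\leqslant\lambda}\dim L(\lambda)_\tau\,\omega'_\tau\omega_{-\tau}$ from Theorem \ref{form of z}, then use Weyl-invariance of weight multiplicities to express it in the basis $\{\operatorname{av}(\mu)\}$ and conclude spanning by (uni)triangularity with respect to dominance order, the paper phrasing this last step as an induction on the height of $\lambda$. Your version is in fact slightly more careful, since the coefficient of $\operatorname{av}(\mu)$ is really $\frac{|W|}{|W_\mu|}\dim L(\lambda)_\mu$ by orbit--stabilizer rather than the uniform $|W|\dim L(\lambda)_\mu$ the paper writes, though this does not affect the nonvanishing of the leading coefficient.
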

	
	\begin{proof}
		By Theorem \ref{form of z}, for each $\lambda \in \Lambda^+ \cap Q$, the central element $z_\lambda$ satisfies
		\begin{equation*}
			z_\lambda^0 = \sum_{\mu \leqslant \lambda} q^{-2(\rho, \mu)} \dim(L(\lambda)_\mu) \, \omega'_\mu \omega_{-\mu}.
		\end{equation*}
		Applying the definition of the map $\xi$ in Section \ref{sec3.1}, we obtain
		\begin{equation*}
			\begin{split}
				\xi(z_\lambda) = \gamma^{-\rho}(z_\lambda^0)
				&= \sum_{\mu \leqslant \lambda} q^{-2(\rho, \mu)} \dim(L(\lambda)_\mu) \, \varrho^{-\rho}(\omega'_\mu \omega_{-\mu}) \, \omega'_\mu \omega_{-\mu} \\
				&= \sum_{\mu \leqslant \lambda} \dim(L(\lambda)_\mu) \, \omega'_\mu \omega_{-\mu} \; \in (U^0_\flat)^W.
			\end{split}
		\end{equation*}
		
		It remains to prove that each $\operatorname{av}(\lambda)$ lies in $\operatorname{Im}(\xi)$. We proceed by induction on the height of $\lambda$.
		
		If $\lambda = 0$, then $\operatorname{av}(0) = 1 = \xi(z_0) \in \operatorname{Im}(\xi)$. Suppose $\lambda > 0$. The properties
		$
		\dim(L(\lambda)_\mu) = \dim(L(\lambda)_{\sigma(\mu)})
		$
		and $\dim(L(\lambda)_\lambda) = 1$ imply that
		\begin{equation*}
			\xi(z_\lambda) = \sum_{\mu \leqslant \lambda} \dim(L(\lambda)_\mu) \, \omega'_\mu \omega_{-\mu}
			= |W| \cdot \operatorname{av}(\lambda) + |W| \sum_{\mu < \lambda,\; \mu \in \Psi} \dim(L(\lambda)_\mu) \operatorname{av}(\mu).
		\end{equation*}
		By the induction hypothesis, each $\operatorname{av}(\mu)$ for $\mu < \lambda$ lies in $\operatorname{Im}(\xi)$, then
		\begin{equation*}
			\operatorname{av}(\lambda) = \frac{1}{|W|} \xi(z_\lambda) - \sum_{\mu < \lambda,\; \mu \in \Psi} \dim(L(\lambda)_\mu) \operatorname{av}(\mu) \in \operatorname{Im}(\xi).
		\end{equation*}
		Now we conclude that
		\begin{equation*}
			\begin{split}
				(U^0_\flat)^W &= \operatorname{span}_\mathbb{K} \{ \operatorname{av}(\lambda) \mid \lambda \in \Psi \}
				\subseteq \operatorname{Im}(\xi),
			\end{split}
		\end{equation*}
		which completes the proof together with Theorem \ref{contain half1}.
	\end{proof}
	
	\begin{corollary} \label{conj}
		The centre $Z(U)$ admits the basis $\{z_\lambda\;\big|\;\lambda\in\Psi\}$ over $\mathbb{K}$, thereby supporting the Batra-Yamane conjecture \cite[Conj.~3.13]{BY20}.
	\end{corollary}
	
	\begin{proof}
		The previous proof has shown that
		$(U^0_\flat)^W = \operatorname{span}_\mathbb{K} \{ \operatorname{av}(\lambda) \mid \lambda \in \Psi \} =  \operatorname{span}_\mathbb{K} \{ \xi(z_\lambda) \mid \lambda \in \Psi \}$. Moreover, the set $\{\xi(z_\lambda)\mid \lambda\in\Psi\}$ is a $\mathbb{K}$-linearly independent, otherwise $\exists\; \lambda^1,\cdots,\lambda^s \in \Psi$, such that
		\begin{equation*}
			0 = \sum_{i=1}^{s} a_i \xi(z_{\lambda^i}) = |W| \sum_{i=1}^{s} a_i \left( \operatorname{av}(\lambda^i) + \sum_{\mu < \lambda^i,\; \mu \in \Psi} \operatorname{dim}(L(\lambda^i)_\mu) \operatorname{av}(\mu) \right),
		\end{equation*}
		and all $a_i \neq 0$, we thus arrive at a contradiction: there exists a weight $\lambda^{i_0}$ such that $\lambda^i \ngtr \lambda^{i_0}$ for all other $i$, which necessarily implies $a_{i^0} \operatorname{av}(\lambda^{i^0}) = 0$ in the summand.
		Hence the set $\{\,\xi(z_\lambda)\mid\lambda\in\Psi\,\}$ forms a basis of $(U^0_\flat)^W$. By applying the Harish-Chandra isomorphism, we conclude that the centre $Z(U)$ admits the basis $\{z_\lambda\mid\lambda\in\Psi\}$.
		
		Following the notation established in \cite{BY20} and under the assumptions of the present work, we set $\pi(i)=\alpha_i$, $\mathfrak{A}=Q$, and the map $\chi(\alpha_i, \alpha_j) = \langle w_j' , w_i \rangle$ for all $i,j$. Then the set $\mathfrak{Z}_1 (\chi,\pi)$ is just the centre $Z(U)$, and $\mathfrak{B}^{\chi,\pi}_1 = (U^0_\flat)^W$ --- an explicit realization of the abstract subalgebra in \cite[Thm.~10.4]{BY18}. The set
		\begin{equation*}
			\operatorname{Fin}^\chi_1:= \{(\lambda,\mu) \in \mathfrak{A} \times \mathfrak{A} \mid \dim \mathcal{L}(\Lambda^\chi_{\lambda,\mu;1}) < \infty\}
		\end{equation*}	
		where the map
		$\Lambda^\chi_{\lambda,\mu;1}(w'_{\mu'} w_{\lambda'}) := \chi(\lambda,\mu') \chi(\lambda',\mu)=
		\langle w'_{\mu},w_{\lambda'}\rangle  \langle w'_{\mu'},w_\lambda \rangle
		$ (also denoted by $\varrho^{(\lambda,\mu)}$).
		Our $\varrho^\lambda = \Lambda^\chi_{-\lambda,\lambda;1} = \varrho^{(-\lambda,\lambda)}$, then $(-\lambda,\lambda) \in \operatorname{Fin}^\chi_1$ if and only if $\lambda\in \Lambda^+ \cap \mathfrak{A} = \Psi$. On the other hand, there is no finite-dimensional weight module with the highest weight $\varrho^{(\lambda,\mu)}$ such that $\lambda+\mu \neq 0$, this fact is guaranteed by \cite[Prop.36]{PHR10} through quick verification. Above all, we have $\Psi=\operatorname{Fin}^\chi_1$ and confirm that the Batra-Yamane conjecture $\mathfrak{Z}_1 (\chi,\pi)=\operatorname{span}_{\mathbb{K}}\left\{ z_{\lambda} \middle| \lambda \in \operatorname{Fin}^\chi_1 \right\}$ holds for $Z(U_{q,G}(\mathfrak{g}))$.
	\end{proof}
	
	\section{The Centre of $U_{q,G}(\mathfrak{g})$}
	
	Now we determine generators of the subalgebra $(U_{\flat}^0)^W$.
	There is a natural monomorphism
	$\theta : (U^0_\flat)^W \rightarrow \mathbb{K}[\mathfrak{h}^*]^W$
	defined by
	\begin{equation*}
		\theta\left( \sum_{\sigma \in W} \omega^\prime_{\sigma(\lambda)}\omega_{-\sigma(\lambda)} \right) = \sum_{\sigma \in W} e^{\sigma(\lambda)}, \quad \forall \;\lambda \in  \Psi.
	\end{equation*}
	Meanwhile, the homomorphism $\operatorname{Ch}: R(\mathfrak{g}) \rightarrow \mathbb{Z}[\Lambda]$ induces an isomorphism $R(\mathfrak{g}) \cong \mathbb{Z}[\Lambda]^W$ (see, e.g., \cite{BP02}) and $R(\mathfrak{g})\otimes_{\mathbb{Z}}  \mathbb{K} \cong \mathbb{K}[\mathfrak{h}^*]^W$.
	Since the element $\textsc{z}_\lambda := \textnormal{Ch}(L(\lambda))$ satisfies $\textsc{z}_\lambda = \theta \xi(z_\lambda) $ when $\lambda \in \Psi$, we have
	$\mathrm{Im}(\theta) = \mathrm{span}_\mathbb{K} \{\ \textsc{z}_\lambda \mid \lambda \in \Psi \}$.
	For simplicity of notation, we write
	$\textsc{z}_i := \textsc{z}_{\varpi_i}$, $i=1,\cdots,n$.
	
	\begin{lemma}\label{alg ind}
		The elements $\textsc{z}_1,\; \textsc{z}_2,\;\cdots,\;\textsc{z}_{n}$ are algebraically independent over $\mathbb{K}$.
	\end{lemma}
	\begin{proof}
		If there exists a polynomial $f$ in $\mathbb{K}[v_1,\cdots,v_n]$
		such that $f(\textsc{z}_1, \textsc{z}_2, \dots, \textsc{z}_n) = 0$, writing
		\begin{equation*}
			f = c_{k_1,\dots,k_n} v_1^{k_1} v_2^{k_2} \dots v_n^{k_n} + \sum_{\left( a_1,...,a_n \right) <\left( k_1,...,k_n \right)}{c_{a_{1,...,}a_n}v_{1}^{a_1}v_{2}^{a_2}...v_{n}^{a_n}},
		\end{equation*}
		where ``$<$'' denotes the lexicographical order, and $v_1^{k_1} v_2^{k_2} \dots v_n^{k_n}$ is the maximal one with $c_{k_1, \dots, k_n} \neq 0$.
		Through the isomorphism $\operatorname{Ch}:R(\mathfrak{g})\otimes_{\mathbb{Z}}  \mathbb{K} \cong \mathbb{K}[\mathfrak{h}^*]^W$, the equation $f(\textsc{z}_1, \textsc{z}_2, \dots, \textsc{z}_n)=0$ corresponds to
		\begin{equation*}
			c_{k_1,\dots,k_n} \left[ \bigotimes_{i=1}^n L(\varpi_i)^{\otimes k_i}\right] +  \sum_{\left( a_1,...,a_n \right) <\left( k_1,...,k_n \right)} c_{a_1,...,a_n}  \left[ \bigotimes_{i=1}^n L(\varpi_i)^{\otimes a_i}\right] =0.
		\end{equation*}
		If we rewrite the equation above as a linear combination of the basis elements in $\{[L(\lambda)]\mid \lambda \in \Lambda^+\}$,
		then $[L(k_1\varpi _1+...+k_n\varpi _n)]$ only appears in $\left[ \bigotimes_{i=1}^n L(\varpi_i)^{\otimes k_i}\right]$, then $c_{k_1, \dots, k_n}$ has to be $0$, contradiction.
	\end{proof}
	
	\subsection{Minimal generating system $\Psi_{\min}$}
	An element $\lambda \in \Psi \setminus \{0\}$ is said to be indecomposable if  $\nexists\; \lambda', \lambda'' \in \Psi \setminus {0}$, such that $\lambda = \lambda' + \lambda''$. Let $\Psi_{\min}$ be the set containing all indecomposable weights in $\Psi \setminus \{0\}$, which is the minimal generating system of $\Psi$. We adopt an approach analogous to that in \cite{LXZ16} and adapt it to our multi-parameter setting. We will show that the following lemmas, together with our main result (Theorem \ref{Last theorem}), hold in this setting, where the constructions of $\Psi$ and $\Psi_{min}$ are considerably more refined.
	\begin{lemma}
		The set $\Psi_{\min}$ is finite.
	\end{lemma}
	
	\begin{proof}
		According to the table in Appendix A, for each $1\leqslant k \leqslant n$, there exist elements $c \in \mathbb{Z}_+$ such that $c \varpi_k \in \Psi\setminus\{0\}$. The smallest $c$ is denoted by $r_k$ (e.g., $r_k=\textstyle\frac{n+1}{(n+1,k)}$ for type $A_n$), and the weight $\textbf{s}(k) := r_k \varpi_k \in \Psi_{\min}$ is called a single weight.
		Hence if $\lambda=\sum_{i=1}^{n} \lambda_i \varpi_i \in \Psi_{\min}$, then each $\lambda_{i} \leqslant r_i$ (otherwise $\exists\, i_0$, such that $\lambda - \textbf{s}(i_0) \in \Psi\setminus\{0\}$, contradiction). The finiteness is clear.
	\end{proof}
	
	Except for type $A_n$, one can directly check that the given set forms a minimal generating system for the additive monoid $\Psi$ by straightforward enumeration and combinatorial arguments.
	
	\begin{lemma}\label{min generator}
		For $\operatorname{rank}(\mathfrak{g}) \geqslant 2$, the elements of $\Psi_{\min}$ are shown explicitly as follows
		\vspace{0.3cm}
		\begin{center}
			\begin{tabular}{|c|l|}
				\hline
				\textbf{Type} & \textbf{Minimal generating system $\Psi_{\min}$} \\
				\hline
				$A_n$ &
				$\begin{aligned}
					&\{\textbf{s}(k) := r_k \varpi_k\; \big| \;  r_k=\textstyle\frac{n+1}{(n+1,k)},\;  1\leqslant k \leqslant n\;\}_{\textnormal{(single)}} \\
					& \cup \{\textbf{e}(k) := d_k \varpi_1 + \varpi_k\;  \big| \; d_k=n+1-k,\; 2 \leqslant k \leqslant n\;\}_{\textnormal{(special)}} \\
					& \cup \{ \tau \mid  \text{not single and not special}\}_{=: T}\\
				\end{aligned}$ \\
				\hline
				$B_n$ &
				$\{\varpi_1, \dots, \varpi_{n-2}, \varpi_{n-1}, 2\varpi_n\}$ \\
				\hline
				$C_{2k+1}$ &
				$\begin{aligned}
					\{ & 2\varpi_1, \varpi_2, 2\varpi_3, \dots, \varpi_{2k}, 2\varpi_{2k+1}, \\
					& \varpi_u + \varpi_v \; (u,v \equiv 1 \pmod{2},\, u<v)  \}
				\end{aligned}$ \\
				\hline
				$C_{2k+2}$ &
				$\begin{aligned}
					\{ & 2\varpi_1, \varpi_2, 2\varpi_3, \dots, \varpi_{2k}, 2\varpi_{2k+1}, \varpi_{2k+2},\\
					& \varpi_u + \varpi_v \; (u,v \equiv 1 \pmod{2},\, u<v) \}
				\end{aligned}$ \\
				\hline
				$D_{2k+2}$ &
				$\begin{aligned}
					\{ & 2\varpi_1, \varpi_2, 2\varpi_3, \dots, 2\varpi_{2k-1}, \varpi_{2k}, 2\varpi_{2k+1}, 2\varpi_{2k+2}, \\
					& \varpi_u + \varpi_v \; (u,v \equiv 1 \pmod{2},\, u<v),\; \varpi_{2k+1} + \varpi_{2k+2} \}
				\end{aligned}$ \\
				\hline
				$D_{2k+3}$ &
				$\begin{aligned}
					\{ & 2\varpi_1, \varpi_2, 2\varpi_3, \dots, 2\varpi_{2k-1}, \varpi_{2k},
					2\varpi_{2k+1}, 4\varpi_{2k+2}, 4\varpi_{2k+3}, \\
					& \varpi_u + \varpi_v \; (u,v \equiv 1 \pmod{2},\, u<v\leqslant 2k+1), \\
					& \varpi_u + 2\varpi_{2k+2} \; (u \equiv 1 \pmod{2},\, u\leqslant 2k+1), \\
					& \varpi_u + 2\varpi_{2k+3} \; (u \equiv 1 \pmod{2},\, u\leqslant 2k+1), \\
					& 2\varpi_{2k+2} + 2\varpi_{2k+3}, \varpi_{2k+2} + 3\varpi_{2k+3}, 3\varpi_{2k+2} + \varpi_{2k+3} \}
				\end{aligned}$ \\
				\hline
				$E_6$ &
				$\begin{aligned}
					&\{3\varpi_1, \varpi_2, 3\varpi_3, \varpi_4, 3\varpi_5, 3\varpi_6, \\
					&\varpi_1 + \varpi_3, \varpi_1 + \varpi_6, \varpi_3 + \varpi_5, \varpi_5 + \varpi_6, \\
					&
					\varpi_1 + 2\varpi_5, 2\varpi_1 + \varpi_5, \varpi_3 + 2\varpi_6, 2\varpi_3 + \varpi_6 \}
				\end{aligned}$ \\
				\hline
				$E_7$ &
				$\begin{aligned}
					\{ & \varpi_1, 2\varpi_2, \varpi_3, \varpi_4, 2\varpi_5, \varpi_6, 2\varpi_7, \\
					& \varpi_2 + \varpi_5, \varpi_2 + \varpi_7, \varpi_5 + \varpi_7 \}
				\end{aligned}$ \\
				\hline
				$E_8$, $F_4$, $G_2$ &
				$\{\varpi_1,\; \varpi_2, \dots, \varpi_n\}$ \\
				\hline
			\end{tabular}
		\end{center} \vspace{0.3cm}
	\end{lemma}
	
	Here we give an explicit description of $\Psi_{\min}$ for type $A_n$.
	Let $\lambda=\sum_{i=1}^{n} \lambda_i \varpi_i \in \Lambda$. Set $|\lambda| := \sum_{k=1}^{n} k\lambda_k \in \mathbb{Z}_{+}$.
	Then we write
	\begin{equation*}
		\begin{split}
			\lambda &=\lambda_1\varpi_1-\textstyle\sum_{i=2}^n{\lambda_i(\alpha_{i-1}+2\alpha_{i-2}+...+(i-1)\alpha_1-i\varpi_1}) \\
			&=\left(\textstyle\sum^{n}_{i=1}i \lambda_i \right)\varpi_1-(\lambda_n\alpha_{n-1}+(2\lambda_n+\lambda_{n-1})\alpha_{n-2}+\cdots+((n-1)\lambda_n+...+\lambda_2)\alpha_1).
		\end{split}
	\end{equation*}
	Hence, $\lambda\in Q$ if and only if $(\sum_{i=1}^n{i \lambda_i})\varpi_1\in Q$, if and only if $(n+1) \big|\, |\lambda|$. This fact shows that
	\begin{equation*}
		\Psi = \left\{\lambda \in \Lambda^+ \mid\; (n+1)\big|\,|\lambda|\right\}.
	\end{equation*}
	Hence, single weights $\textbf{s}(k) = r_k \varpi_k \ (1\leqslant k \leqslant n)$ and special weights $\textbf{e}(k) = d_k \varpi_1 + \varpi_k \ (2 \leqslant k \leqslant n)$ are contained in $\Psi_{\min}$.
	We end this subsection by proving a property of the elements in $T$.

	\begin{lemma}\label{norm of tau}
		For each $\tau = \sum_{k=1}^{n} \tau_k \varpi_k \in T$, set $\|\tau\| =  -\tau_1 + \sum_{k \neq 1} \tau_k d_k$, then we have
		\begin{equation*}
			\tau + \|\tau\| \varpi_1 = \textstyle\sum_{k \neq 1} \tau_k \mathbf{e}(k),
		\end{equation*}	
		and $\|\tau\|$ is a positive integer such that $(n+1) \big| \,\|\tau\|$ .
	\end{lemma}
	
	\begin{proof}
		One can directly verify that
		\begin{equation*}
			\textstyle\sum_{k \neq 1} \tau_k \mathbf{e}(k)
			= \textstyle\sum_{k \neq 1} \tau_k (d_k \varpi_1 + \varpi_k)
			= \left( \textstyle\sum_{k \neq 1}  \tau_k d_k \varpi_1 \right) + \tau -\tau_1 \varpi_1 = \tau + \|\tau\|  \varpi_1.
		\end{equation*}
		Since $\sum_{k \neq 1} \tau_k \mathbf{e}(k)\in Q $ and $\tau \in Q$, we have $\|\tau\|\varpi_1 \in Q$, that is, $(n+1) \big| \|\tau\|$.
		
		Assume that $\|\tau\| \leqslant 0$, then $\tau_1 \geqslant \sum_{k \neq 1} \tau_k d_k$. Since $\tau\neq\tau_1 \varpi_1$ (otherwise $(n+1) \big| \tau_1$ and $\tau = \textstyle\frac{\tau_1}{n+1}\textbf{s}(1)$, contradiction),
		there exists $k_0 \neq 1$ such that $\tau_{k_0} \geqslant 1$. Then
		\begin{equation*}
			Q \ni \tau - \textbf{e}(k_0) = (\tau_1 - d_{k_0}) \varpi_1 + (\tau_{k_0} - 1) \varpi_{k_0} + \sum_{k \neq 1, k_0} \tau_k \varpi_k.
		\end{equation*}	
		However, $\tau_1 - d_{k_0} \geqslant \tau_1 - \sum_{k \neq 1} \tau_k d_k \geqslant 0$ implies that the right-hand side of the above expression belongs to $\Lambda^+$. This leads to $\tau-\textbf{e}(k_0) \in \Lambda^+ \cap Q = \Psi$, and then $\tau$ has to be $\textbf{e}(k_0)$ since $\tau$ is indecomposable, which contradicts to $\tau \in T$.
		Thus, we have $\|\tau\|>0$, the Lemma holds.
	\end{proof}
	
	\subsection{Presentation of $Z(U_{q,G})$ }
	
	Now we write $m = |\Psi_{\min}|$ and $\Psi_{\min} = \{\, \mu_1, \ldots, \mu_m\, \}$ with the order presented in the Table of Lemma \ref{min generator}.
	
	\begin{lemma} \label{minigenset}
		The set $\{\textsc{z}_{\mu_i} \}_{i=1}^m $ forms a minimal generating set of $ \operatorname{Im}(\theta) $.
		For each $ \mu_i = \sum_{j=1}^{n} \mu_{ij} \varpi_j \in \Psi_{\min} $, define another element $ x_{\mu_i} = \prod_{j=1}^{n} \textsc{z}_j^{\mu_{ij}} $.
		Then the set $ \{x_{\mu_i} \}_{i=1}^m $ provides another minimal generating set of $ \operatorname{Im}(\theta) $.
	\end{lemma}
	
	\begin{proof}
		Notice that $\operatorname{Im}(\theta) = \operatorname{span}_{\mathbb{K}}\left\{\, \textsc{z}_\lambda \mid \lambda \in \Psi \,\right\} $,
		if  $\lambda \in \Psi \setminus \Psi_{\min}$, then $\exists\, \mu \in \Psi_{\min},\; \lambda - \mu \in \Psi$, and
		\begin{equation*}
			\textsc{z}_{\lambda - \mu} \textsc{z}_\mu = \textsc{z}_\lambda + \sum_{\nu < \lambda,\; \nu \in \Lambda^+}
			[L(\lambda-\mu) \otimes L(\mu):L(\nu)]\;
			\textsc{z}_\nu,
		\end{equation*}
		which leads to $\textsc{z}_\lambda \in \langle \,\textsc{z}_\mu,\;\textsc{z}_{\lambda-\mu},\; \textsc{z}_\nu  \mid\; 0 < \nu < \lambda ,\; \nu \in \Psi \,\rangle $.
		Hence by induction on the square length $(\lambda, \lambda)$ of  $\lambda$,
		one obtains $\textsc{z}_\lambda \in \langle \textsc{z}_{\mu_i}\rangle_{i=1}^m$. The first claim is clear.
		
		Similarly, if $\lambda = \sum_{i=1}^n \lambda_i \varpi_i \in \Psi$, then
		\begin{equation} \label{decomp}
			\prod_{i=1}^n \textsc{z}_{i}^{\lambda_i} = \textsc{z}_\lambda + \sum_{\gamma < \lambda, \; \gamma \in \Lambda^+} d^\gamma_\nu(\lambda_1, \ldots, \lambda_n) \; \textsc{z}_\gamma.
		\end{equation}
		where $ d^{\gamma}_{\lambda}(\lambda_1, \cdots, \lambda_n) := [L(\varpi_1)^{\otimes \lambda_1} \otimes \cdots \otimes L(\varpi_n)^{\otimes \lambda_n}:L(\gamma)]$.
		If $ d^\lambda_\nu(\lambda_1, \ldots, \lambda_n) \neq 0 $, then $ \lambda - \gamma \in Q $, which forces $ \gamma \in \Psi $, that is, $ \textsc{z}_\gamma \in \operatorname{Im}(\theta)$. Therefore \(\prod_{i=1}^n \textsc{z}_{i}^{\lambda_i} \in \operatorname{Im}(\theta)\), and in particular, each $ x_{\mu_i} = \prod_{j=1}^{n} \textsc{z}_j^{\mu_{ij}} \in \operatorname{Im}(\theta) = \langle \textsc{z}_{\mu_i} \rangle_{i=1}^m$.
		
		To prove each $\textsc{z}_{\mu_i}$ can be generated by $\{ x_{\mu_i} \mid \mu_i \in \Psi_{\min} \}$, we temporarily reorder $\Psi_{\min}$ such that if $\mu_i < \mu_j$ then $i>j$.
		Then Eq.(\ref{decomp}) leads to $x_{\mu_m}\in \textsc{z}_{\mu_m}+\mathbb{Z}$ and $x_{\mu_i}\in \textsc{z}_{\mu_i}+\mathbb{Z}[\textsc{z}_{\mu_{i+1}}, \cdots,\textsc{z}_{\mu_m}]\;(1\leqslant i<m)$.
		Then one has $\textsc{z}_{\mu_m}\in x_{\mu_m}+\mathbb{Z}$, and $z_{\mu_i}\in x_{\mu_i}+\mathbb{Z}[x_{\mu_{i+1}}, \cdots,x_{\mu_m}]$ by descending induction on $1\leqslant i <m$. That is, each $\textsc{z}_{\mu_j}$ belongs to $\langle x_{\mu_i} \rangle_{i=1}^m$, for $j=1,\cdots,m$.
		The second claim $\operatorname{Im}(\theta) = \langle x_{\mu_i}  \rangle_{i=1}^m$ is clear.
	\end{proof}
	In the following theorem, we present a polynomial algebra $S$ together with a relation ideal $I$ such that $S/I$ gives a presentation of $\langle x_{\mu_i} \rangle_{i=1}^m$, from which $Z(U)$ is obtained.
	
	\begin{theorem} \label{Last theorem}
		\begin{enumerate}
			\item The centre $Z(U)$ is isomorphic to a polynomial algebra $\mathbb{K}[t_1, \cdots, t_n]$ when $\mathfrak{g}$ is of type $B_n$, $E_8$, $F_4$ or $G_2$.
			\item For the other types, the centre $Z(U)$ is isomorphic to a quotient $\mathfrak{R} = S/I$, where $S$ is a polynomial algebra over $\mathbb{K}$ of rank  $|\Psi_{\min}|$, and $I$ is the relation ideal whose generators are specified below
		\end{enumerate}
		\begin{center}
			\resizebox{1\hsize}{!}{$
				\begin{tabular}{|c|l|l|l|}
					\hline
					\textbf{Type} &
					\textbf{Generators $x_{\mu_i}$ of $\operatorname{Im}{\theta}$ } &
					\textbf{Generators of $S$} &
					\textbf{Generating set of $I$}\\
					\hline
					$A_{n \geqslant 2}$ &
					$\begin{aligned}
						& \textsc{z}_1^{r_1},\; \textsc{z}_2^{r_2},\;\cdots,\;\textsc{z}_{n}^{r_n},\\
						& \textsc{z}_1^{d_2} \textsc{z}_2,\;\cdots,\; \textsc{z}_1^{d_n} \textsc{z}_n,\\
						& x_\tau= \textstyle\prod_{i=1}^{n} \textsc{z}_i^{\tau_i} \; (\tau \in T)
					\end{aligned}$
					&
					$\begin{aligned}
						& t_1,\; t_2,\; \cdots,\; t_n,\\
						& p_2,\; \cdots,\; p_n,\\
						& w_{\tau} \; (\tau \in T)
					\end{aligned}$
					&
					$\begin{aligned}
						& t_1^{\frac{d_k}{(n+1,k)}} t_k - p_k^{r_k} \; (k \neq 1), \\
						& \textstyle\prod_{k=2}^n p_k^{\tau_k} - t_1^{\frac{\|\tau\|}{n+1}} w_{\tau}
					\end{aligned}$
					\\
					\hline
					$B_{n}$ &
					$\begin{aligned}
						& \textsc{z}_1,\; \textsc{z}_2,\; \textsc{z}_3,\;\cdots,\; \textsc{z}_{n-1},\;\textsc{z}_{n}^2 \\
					\end{aligned}$
					&
					$\begin{aligned}
						& t_1, t_2, t_3,\cdots,t_{n-1},t_{n}
					\end{aligned}$
					&
					$0$
					\\
					\hline
					$C_{2k+1}$ &
					$\begin{aligned}
						& \textsc{z}_1^2,\; \textsc{z}_2,\; \textsc{z}_3^2,\;\cdots,\; \textsc{z}_{2k},\;\textsc{z}_{2k+1}^2,\\
						& \textsc{z}_{u} \textsc{z}_{v} \; (u,v \; \textnormal{odd},\, u<v)
					\end{aligned}$
					&
					$\begin{aligned}
						& t_1, t_2, t_3,\cdots,t_{2k},t_{2k+1},\\
						& t_{u,v} \; (u,v \; \textnormal{odd},\, u<v)
					\end{aligned}$
					&
					$\begin{aligned}
						& t_1 t_u - t_{1,u}^2 \;(u>1),\\
						& t_1 t_u t_v - t_{1,u} t_{1,v} t_{u,v}\; (u>1)
					\end{aligned}$
					\\
					\hline
					$C_{2k+2}$ &
					$\begin{aligned}
						& \textsc{z}_1^2,\; \textsc{z}_2,\;\cdots,\; \textsc{z}_{2k},\;\textsc{z}_{2k+1}^2,\;\textsc{z}_{2k+2} \\
						& \textsc{z}_{u} \textsc{z}_{v} \; (u,v \; \textnormal{odd},\, u<v)
					\end{aligned}$
					&
					$\begin{aligned}
						& t_1, t_2,\cdots,t_{2k},t_{2k+1}, t_{2k+2}\\
						& t_{u,v} \; (u,v \; \textnormal{odd},\, u<v)
					\end{aligned}$
					&
					$\begin{aligned}
						& t_1 t_u - t_{1,u}^2 \;(u>1),\\
						&  t_1 t_u t_v - t_{1,u} t_{1,v} t_{u,v}\; (u>1)
					\end{aligned}$
					\\
					\hline
					$D_{2k+2}$ &
					$\begin{aligned}
						& \textsc{z}_1^2,\; \textsc{z}_2,\;\cdots,\; \textsc{z}_{2k-2},\;\textsc{z}_{2k-1}^2,\;\textsc{z}_{2k},\; \\
						& \textsc{z}_{2k+1}^2,\; \textsc{z}_{2k+2}^2 \\
						& \textsc{z}_{u} \textsc{z}_{v} \; (u,v \; \textnormal{odd},\; u<v\leqslant2k ),\\
						& \textsc{z}_{2k+1} \textsc{z}_{2k+2}
					\end{aligned}$
					&
					$\begin{aligned}
						& t_1, t_2,\cdots,t_{2k-2},t_{2k-1},t_{2k},\\
						& t_{2k+1}, t_{2k+2},\\
						& t_{u,v} \; (u,v \; \textnormal{odd},\; u<v\leqslant2k ),\\
						& t_{2k+1,2k+2}
					\end{aligned}$
					&
					$\begin{aligned}
						& t_1 t_u - t_{1,u}^2 \;(u>1),\\
						& t_1 t_u t_v - t_{1,u} t_{1,v} t_{u,v}\; (u>1),\\
						& t_{2k+1} t_{2k+2} - t_{2k+1,2k+2}^2
					\end{aligned}$
					\\
					\hline
					$D_{2k+3}$ &
					$\begin{aligned}
						& \textsc{z}_1^2,\; \textsc{z}_2,\;\textsc{z}_3^2,\;\cdots,\; \textsc{z}_{2k},\; \textsc{z}_{2k+1}^2,\;\\
						& \textsc{z}_{2k+2}^4,\; \textsc{z}_{2k+3}^4,\\
						& \textsc{z}_{u} \textsc{z}_{v} \; (u,v \; \textnormal{odd},\; u<v\leqslant 2k+1),\\
						& \textsc{z}_{u} \textsc{z}_{2k+2}^2 \; (u \; \textnormal{odd},\; u\leqslant 2k+1)\\
						& \textsc{z}_{u} \textsc{z}_{2k+3}^2 \; (u \; \textnormal{odd},\; u\leqslant 2k+1)\\
						& \textsc{z}_{2k+2} \textsc{z}_{2k+3}^3,\;\\
						& \textsc{z}_{2k+2}^3 \textsc{z}_{2k+3},\;\\
						& \textsc{z}_{2k+2}^2 \textsc{z}_{2k+3}^2
					\end{aligned}$
					&
					$\begin{aligned}
						& t_1, t_2,t_3,\cdots,t_{2k},t_{2k+1},\\
						& t_{2k+2}, t_{2k+3},\\
						& t_{u,v} \; (u,v \; \textnormal{odd},\; u<v\leqslant 2k+1),\\
						& p_{u} \; (u \; \textnormal{odd},\; u\leqslant 2k+1),\\
						& q_{u} \; (u \; \textnormal{odd},\; u\leqslant 2k+1),\\
						& w_1,\\
						& w_2,\\
						& w_3
					\end{aligned}$
					&
					$\begin{aligned}
						& t_1 t_u - t_{1,u}^2 \;(u>1),\\
						& t_1 t_u t_v - t_{1,u} t_{1,v} t_{u,v}\; (u>1),\\
						& t_1 t_{2k+2} - p_1^2,\; t_1 t_{2k+3} - q_1^2,\\
						& t_{2k+2} t_{1,u} - p_1 p_u \;(u>1),\; \\
						& t_{2k+3} t_{1,u} - q_1 q_u \;(u>1),\\
						& t_{2k+2} t_{2k+3}^3 - w_1^4,\;\\
						& t_{2k+2}^3 t_{2k+3} - w_2^4,\\
						& t_{2k+2} t_{2k+3} - w_3^2
					\end{aligned}$
					\\
					\hline
					$E_6$ &
					$\begin{aligned}
						& \textsc{z}_1^3,\; \textsc{z}_2,\;  \textsc{z}_3^3, \;
						\textsc{z}_4,\; \textsc{z}_5^3, \; \textsc{z}_6^3, \;\\
						& \textsc{z}_1 \textsc{z}_3,\; \textsc{z}_1 \textsc{z}_6,\;
						\textsc{z}_3 \textsc{z}_5,\; \textsc{z}_5 \textsc{z}_6\\
						& \textsc{z}_1 \textsc{z}_5^2,\; \textsc{z}_1^2 \textsc{z}_5,\;
						\textsc{z}_3 \textsc{z}_6^2,\; \textsc{z}_3^2 \textsc{z}_6
					\end{aligned}$
					&
					$\begin{aligned}
						&t_1, t_2, t_3, t_4, t_5, t_6, \\
						&t_7, t_8, t_9, t_{10}, \\
						&t_{11}, t_{12}, t_{13}, t_{14}
					\end{aligned}$
					&
					$\begin{aligned}
						& t_1 t_3 - t_7^3,\;
						t_1 t_6 - t_8^3,\;\\
						& t_3 t_5 - t_9^3,\;
						t_8 t_9 - t_7 t_{10}, \\
						& t_7 t_9^2 - t_3 t_{11},\;
						t_7^2 t_9 - t_3 t_{12}, \;\\
						& t_7 t_8^2-t_1 t_{13}, \;
						t_7^2t_8-t_1t_{14}
					\end{aligned}$
					\\
					\hline
					$E_7$ &
					$\begin{aligned}
						& \textsc{z}_1,\; \textsc{z}_2^2,\;  \textsc{z}_3, \;
						\textsc{z}_4,\; \textsc{z}_5^2, \; \textsc{z}_6,\; \textsc{z}_7^2, \\
						& \textsc{z}_2 \textsc{z}_5,\; \textsc{z}_2 \textsc{z}_7,\;
						\textsc{z}_5 \textsc{z}_7
					\end{aligned}$
					&
					$\begin{aligned}
						& t_1, t_2, t_3, t_4, t_5, t_6, t_7,\\
						& t_8, t_9, t_{10}
					\end{aligned}$
					&
					$\begin{aligned}
						& t_2 t_5 - t_8^2, \quad
						t_2 t_7 - t_9^2, \\
						& t_2 t_5 t_7 - t_8 t_9 t_{10}
					\end{aligned}$
					\\
					\hline
					$E_8\;F_4\;G_2$ &
					$\begin{aligned}
						& \textsc{z}_1,\; \textsc{z}_2,\;\cdots,\;\textsc{z}_{n}
					\end{aligned}$
					&
					$\begin{aligned}
						& t_1, t_2,\cdots,t_{n}
					\end{aligned}$
					&
					$0$
					\\
					\hline
				\end{tabular}
				$}
		\end{center}
	\end{theorem}
	
	\begin{proof}
		(1) When $\mathfrak{g}$ is of type $B_n$, $E_8$, $F_4$, or $G_2$, Lemmas \ref{alg ind} and \ref{minigenset} yield
		\begin{equation*}
			Z(U) \cong \operatorname{Im}(\theta) =
			\begin{cases}	
				\langle \textsc{z}_{1} ,\cdots, \textsc{z}_{n-1}, \textsc{z}_{n} \rangle, \quad \text{for types $E_8,F_4$, and $G_2$,} \\
				\langle \textsc{z}_{1} ,\cdots, \textsc{z}_{n-1}, \textsc{z}^2_{n} \rangle, \quad \text{for type $B_n$,}
			\end{cases}
		\end{equation*}
		and $\{\textsc{z}_{i}\}_{i=1}^n$ is an algebraically independent set.
		Therefore, the centre $Z(U)\cong\operatorname{Im}(\theta)$ is isomorphic to a polynomial algebra of rank $n$.

		(2.1) From type $C_n$ to $E_7$, we only show the proof of type $C_{2k+1}$, as the other types follow analogously.
		
		Define an algebra epimorphism $\phi: S \rightarrow \operatorname{Im}(\theta)$ by
		\begin{equation*}
			\phi(t_{2i-1}) = \textsc{z}_{2i-1}^2,\ \phi(t_{2i}) = \textsc{z}_{2i}, \ (1 \le i \le k+1),\
			\phi(t_{u,v}) = \textsc{z}_u \textsc{z}_v, \ \textnormal{for odd } u < v \leqslant 2k+1,
		\end{equation*}
		and one can directly check that $ I \subseteq \ker \phi $.
		
		Let $S_1 = \mathbb{K}[t_1, \dots, t_{2k+1}]$ and $S_2 = \mathbb{K}[\textsc{z}_1^2, \textsc{z}_2, \dots, \textsc{z}_{2k}, \textsc{z}_{2k+1}^2]$. Since $\textsc{z}_1^2, \textsc{z}_2, \dots, \textsc{z}_{2k}$, $\textsc{z}_{2k+1}^2$ are algebraically independent, the map $\phi|_{S_1}$ is an algebra isomorphism from $S_1$ to $S_2$, and $S_1 \cap \ker \phi = \{0\}$.
		Now let $T_i := S_i \setminus \{0\}$ for $i = 1, 2$. It is obvious that $\phi$ can be extended to an
		algebra epimorphism $\overline{\phi}:T_1^{-1}S\rightarrow T_2^{-1}Im(\theta)$ as follows
		\begin{equation*}
			\overline{\phi}(a^{-1}b) = \phi(a)^{-1}\phi(b), \quad \forall a \in T_1, \, b \in S.
		\end{equation*}
		There is a canonical embedding $T_1^{-1}: S \hookrightarrow T_1^{-1}S$ via $s\mapsto \frac{s}{1}$, and we define $J := T_1^{-1}I \subseteq \ker \overline{\phi}$.
		
		We next prove $ I \supseteq \ker \phi $ by showing that the quotient ring $T^{-1}_1S/J$ is a field.
		
		Let $ F_0 = \mathbb{K}(t_1,\cdots,t_{2k+1}) = T_1^{-1} S_1 $, the function field of $ S_1 $. Then $ F_1 := F_0[t_{2k+2}] \cong F_0[t]/(t^2 - t_1 t_3) $.
		$ F_1 $ is a field since $ t^2 - t_1t_3 $ is irreducible in ring $ F_0[t] $.
		Similarly, we have $ F_2 = F_1[t_{1,5}] = F_1[t]/(t^2-t_1t_5), \cdots, F_k = F_{k-1}[t_{1,2k+1}]= F_{k-1}[t]/(t^2-t_1t_{2k+1}) $
		and $ F_2, \ldots, F_k $ all are field.
		Moreover, by $t_{u,v}=t_{1,u}^{-1}t_{1,v}^{-1}t_1t_ut_v$, we have all $t_{u,v}\in  F_k$, that is, $T_1^{-1}S/J=F_k$. Now $T_1^{-1}S/J$ is a field, it implies that $J$ is a maximal ideal, then (i) $\ker(\overline{\phi})$ has to be the maximal ideal $J$, and $J$ a prime ideal in ${T_1}^{-1} S$, which implies (ii) $I$ is prime in $S$ since it is the contraction of a prime ideal $J$.
		Finally, notice that $I\cap T_1= \emptyset$, we have $J\cap S= T_1^{-1} I \cap S \overset{\text{(ii)}}{=} I$.
		Then one can obtain that
		\begin{equation*}
			\ker \phi \subseteq \ker \overline{\phi} \cap S \overset{\text{(i)}}{=} J \cap S \overset{\text{(ii)}}{=} I.
		\end{equation*}
		Above all, we have $\ker \phi = I$, and then $\mathfrak{R} = S/I \cong \operatorname{Im}(\theta) \cong Z(U)$.

		(2.2) For type $A_n$, define an algebra epimorphism $\phi : S \to \operatorname{Im}(\theta)$ via
		\begin{equation*}
			\phi(t_i) = \textsc{z}_i^{r_i}, \quad \phi(p_k) = \textsc{z}_1^{d_k} \textsc{z}_k, \quad \phi(w_{\tau}) = x_\tau= \textstyle\prod_{i=1}^n{\textsc{z}_{i}^{\tau_i}}
		\end{equation*}
		for $1 \le i, k \le n \; (k \ne 1)$ and all $\tau = \sum_{i=1}^{n} \tau_i \varpi_i \in T$.
		Then by Lemma \ref{norm of tau}, we have $\frac{\left\| \tau \right\|}{n+1} \in \mathbb{Z}_{>0}$ and
		\begin{equation*}
			\begin{split}
				&\phi \left( p_{k}^{r_k} \right) =\textsc{z}_{1}^{r_kd_k} \textsc{z}_{k}^{r_k}=\textsc{z}_{1}^{\frac{\left( n+1 \right)d_k}{\left( n+1,k \right)}}\textsc{z}_{k}^{r_k}=\phi \left( t_{1}^{\frac{d_k}{\left( n+1,k \right)}} t_k \right) ,\\
				&\phi \left(\textstyle\prod_{k\ne 1}{p_{k}^{\tau_k}}\right)=\textstyle\prod_{k\ne 1}{\textsc{z}_{1}^{d_k \tau_k} \textsc{z}_{k}^{\tau_k}}=\textsc{z}_{1}^{\left\| \tau \right\|}\textstyle\prod_{k=1}^n{\textsc{z}_{k}^{\tau_k}}=\phi \left( {t_1}^{\frac{\left\| \tau \right\|}{n+1}}w_{\tau} \right),
			\end{split}
		\end{equation*}
		that is, $ I \subseteq \ker \phi$.

		In order to prove $\phi$ is injective, i.e., $ I \supseteq \ker \phi$, similarly we let $S_1=\mathbb{K}[t_1,\cdots,t_n]$, $S_2=\mathbb{K}[\textsc{z}_1^{r_1},\cdots,\textsc{z}_n^{r_n}]$, $T_i := S_i \setminus \{0\}$ for $i = 1, 2$, and repeat the process in (2.1). It suffices to prove that $T_1^{-1} S/J$ is a field, where $J := T_1^{-1}I \subseteq \ker \overline{\phi}$.
		
		Let $F_0 = \mathbb{K}(t_1, \cdots, t_n)$ be the fraction field of $S_1$. Then $F_1 := F_0[p_2]$ is a field since $t^{r_2} - t_1^{\frac{d_2}{(n+1,2)}} t_2$ is irreducible in $F_0[t]$, and $F_2 := F_1[p_2] = F_0[p_2, p_3]$ is also a field since $t^{r_3} - t_1^{\frac{d_3}{(n+1,3)}} t_3$ is irreducible in $F_1[t]$. In the same way, $F_{n-1} := F_0[p_2, \cdots, p_n]$ is a field.
		Meanwhile, we have $w_{\tau} = t_1^{-\frac{\|\tau\|}{n+1}} \prod_{k=2}^n p_k^{\tau_k} \in F_{n-1}$ for all $\tau \in T$. Thus, $T_1^{-1} S/J = F_{n-1}$, which is indeed a field.
		
		This completes the proof of $S/I \cong \operatorname{Im}(\theta)$.
	\end{proof}	
	
	\begin{theorem}
		The result above provides the same descriptions of $\Psi=\Lambda^+\cap Q$ and $(U^0_\flat)^W$ for the two-parameter quantum group $U_{r,s}(\mathfrak{g})$ in \cite{HW25}. That is,
		\begin{equation*}
			\xi(Z(U_{r,s}))
			\begin{cases}	
				= (U^0_\flat)^W   \hspace{1.9cm} \cong  S/I, &\textnormal{if $n$ is even,}\\
				\supseteq (U^0_\flat)^W \otimes \mathbb{K}[z_{*},z_{*}^{-1}] \cong S/I \otimes \mathbb{K}[z_{*},z_{*}^{-1}],
				&\textnormal{if $n$ is odd,}
			\end{cases}
		\end{equation*}
		where the polynomial algebra $S$ and the ideal $I$ is shown in the table of Theorem \ref{Last theorem}.
	\end{theorem}

	\begin{example} We provide an explicit description of $Z(U_{q,G})$ for $\mathfrak{g}$ of types $A_2$, $A_3$, and $A_4$.
		\begin{center}
			\begin{tabular}{|c|l|}
				\hline
				\textbf{Type} &
				\textbf{Description of $Z(U_{q,G})$}\\
				\hline
				$A_2$ &	
				$\begin{aligned}
					& \Psi_{\min} = \{\, \text{single}:  3\varpi_1,\, 3\varpi_2,\, \text{special}: \varpi_1 + \varpi_2 \,\} \\
					& Z(U_{q,G})\cong \mathbb{K}[t_1, t_2, p_2]/(t_1 t_2 - p_2^3)
				\end{aligned}$
				\\
				\hline
				$A_3$ &
				$\begin{aligned}
					& \Psi_{\min} = \{\,\text{single}: 4\varpi_1,\, 2\varpi_2,\, 4\varpi_3,\,\text{special}: 2\varpi_1 + \varpi_2,\, \varpi_1 + \varpi_3,\,T: \varpi_2 + 2\varpi_3 \,\} \\
					& Z(U_{q,G})\cong \mathbb{K}[t_1, t_2, t_3, p_2, p_3, w_1]\, /\, (p_2^2 - t_1 t_2,\ p_3^4 - t_1 t_3,\ t_1 w_1 - p_2 p_3^2)
				\end{aligned}$
				\\
				\hline
				$A_4$ &
				$\begin{aligned}
					& \Psi_{\min} = \{\, \text{single}: 5\varpi_1,\, 5\varpi_2,\, 5\varpi_3,\, 5\varpi_4,\,  \\
					& \quad\quad\quad\quad \text{special}: 3\varpi_1+\varpi_2,\, 2\varpi_1+\varpi_3,\, \varpi_1+\varpi_4, \\
					& \quad\quad\quad\quad T: \varpi_2+\varpi_3,\, \varpi_1+2\varpi_2,\, \varpi_1+3\varpi_3,\, \varpi_2+2\varpi_4,\,\\
					& \quad\quad\quad\quad\quad\ 3\varpi_2+\varpi_4,\, 2\varpi_3+\varpi_4,\, \varpi_3+3\varpi_4 \}\\
					& Z(U_{q,G})\cong \mathbb{K}[t_1, t_2, t_3, t_4,\ p_2, p_3, p_4,\ w_1, w_2, w_3, w_4, w_5, w_6, w_7]/I,\\
					& I = \langle\; t_1^3 t_2 - p_2^5,\quad t_1^2 t_3 - p_3^5,\quad t_1 t_4 - p_4^5,\quad t_1 w_1 - p_2 p_3,\quad t_1 w_2 - y_2^2, \\
					& \quad\quad t_1 w_3 - p_3^3,\quad t_1 w_4 - p_2 p_4^2,\quad t_1 w_5 - p_2^3 p_4,\quad t_1 w_6 - p_3^2 p_4,\quad t_1 w_7 - p_3 p_4^3 \;\rangle
				\end{aligned}$
				\\
				\hline
			\end{tabular}
		\end{center}
		We compare these results with those in \cite{LXZ16} and write $\Psi'= \Lambda^+ \cap \frac{Q}{2} $. When $n$ is even, we have $Z(U_{q,G}(\mathfrak{sl}_{n+1}))\cong Z(U_{q}(\mathfrak{sl}_{n+1}))$ since monoid $\Psi = \Psi'$. When $n$ is odd, we prove that $\Psi\ncong\Psi'$ as follows.
		
		Assume there exists \, $\varphi: \Psi \cong \Psi'$ as monoids, then $\varphi$ bijectively maps their minimal generating systems. Let $r=\frac{n+1}{2}$, notice that  $\varpi_r\in \Psi_{\min}^\prime$, then $\exists\; \alpha \in \Psi_{\min},\;\varphi(\alpha)=\varpi_r$. This forces that $\varphi(\beta)=\sum_{i\ne r}{k_i\varpi_i}$ for all $\beta\in \Psi_{\min}$, $\beta \ne \alpha$ (otherwise, $\varphi(\beta)$ is decomposable). Therefore, $\varpi_r$ cannot appear as an additive component in any $\mathbb{Z}$-multiple of $\varphi(\beta)$.

		If $\alpha = \textbf{s}(1) =(n+1)\varpi_1$, then
		\begin{equation*}
			r_2 \varphi(\textbf{e}(2)) =
			r_2 \varphi(d_2\varpi_1+\varpi_2)
			=\frac{d_2}{(n{+}1,2)}\varphi((n+1)\varpi_1) + \varphi(r_2\varpi_2)
			=\frac{d_2}{(n{+}1,2)}\varpi_r + \varphi(\textbf{s}(2)).
		\end{equation*}
		If $\alpha= \textbf{s}(k) = r_k\varpi_k (k > 1)$, then
		\begin{equation*}
			r_k \varphi(\textbf{e}(k)) =
			r_k \varphi(d_k\varpi_1+\varpi_k)=
			\frac{d_k}{(n{+}1,k)}\varphi((n+1)\varpi_1)+\varphi(r_k \varpi_k)=
			\frac{d_k}{(n{+}1,k)}\varphi(\textbf{s}(1))+\varpi_r.
		\end{equation*}
		If $\alpha=\textbf{e}(k)= d_k\varpi_1+\varpi_k (k > 1)$, then
		\begin{equation*}
			r_k\varpi_r=
			r_k\varphi(\textbf{e}(k))=
			\frac{d_k}{(n+1,k)}\varphi(\textbf{s}(1))+\varphi(\textbf{s}(k)).
		\end{equation*}
		If $\alpha = \sum_{i=1}^{n} \alpha_i \varpi_i\in T$, applying $\varphi$ to Lemma \ref{norm of tau} yields
		\begin{equation*}
			\textstyle\sum_{k \neq 1} \alpha_k \varphi(\mathbf{e}(k)) =
			\varphi(\alpha) + \frac{\|\alpha\|}{n+1}\varphi((n+1)\varpi_1) = \varpi_r + \frac{\|\alpha\|}{n+1} \varphi(\textbf{s}(1)).
		\end{equation*}
		In all cases, contradictions arise in which $\varpi_r$ appears in some $\mathbb{Z}$-multiple of $\varphi(\beta)$ for $\beta \in \Psi_{\min}$ with $\beta \ne \alpha$.
		It turns out that $\Psi \ncong \Psi'$ and hence $Z(U_{q,G}(\mathfrak{sl}_{n+1})) \ncong Z(U_{q}(\mathfrak{sl}_{n+1}))$ when $n>1$ is odd.
	\end{example}
	
	\section*{Appendix A}
	
	The fundamental weights $\varpi_i$ ($i = 1, \dots, n$) have the following $\mathbb{Q}$-linear expressions in terms of the simple roots $\{\alpha_i\}_{i=1}^n$:
	\begin{center}
		\resizebox{1\hsize}{!}{$
			\begin{tabular}{|c|c|c|c|}
				\hline
				$A_n$ & \multicolumn{3}{c|}{
					$\varpi_i = \left(\frac{(n-i+1)}{n+1}, \frac{2(n-i+1)}{n+1}, \cdots,\frac{(i-1)(n-i+1)}{n+1}, \frac{i(n-i+1)}{n+1}, \frac{i(n-i)}{n+1}, \cdots, \frac{i}{n+1}\right)$
				}\\
				\hline
				\multirow{2}{*}{$B_n$} & $\varpi_i = (1, 2, \cdots, i-1, i, i, \cdots, i)$ & \multirow{3}{*}{$D_n$} & $\varpi_i = \left(1,2,\cdots,i-1,i,i,\cdots,i,\frac{i}{2},\frac{i}{2}\right)$\\
				& $\varpi_n = \frac{1}{2}(1, 2, \cdots, n)$ & & $\varpi_{n-1} = \frac{1}{2} \left(1,2,\cdots,n-2,\frac{n}{2},\frac{n-2}{2}\right)$\\
				\cline{1-2}
				$C_n$ & $\varpi_i = \left(1,\; 2,\; \cdots,\; i-1,\; i, \; i \;\cdots\; i, \; \frac{i}{2} \right)$ & & \hf\hf $\varpi_n = \frac{1}{2} \left(1,2,\cdots,n-2,\frac{n-2}{2},\frac{n}{2}\right)$ \\	
				\hline
				%----------
				\multirow{6}{*}{$E_6$} & $\varpi_1 = \frac{1}{3}( 4, \hf 3, \hf 5, \hf 6, \hf 4, \hf 2)$ & \multirow{6}{*}{$E_7$} & $\varpi_1 = \hf ( 2, \hf 2, \hf 3, \hf 4, \hf 3, \hf 2, \hf 1)$ \\
				& $\varpi_2 = \hf ( 1, \hf 2, \hf 2, \hf 3, \hf 2, \hf 1)$ & & $\varpi_2 = \frac{1}{3}( 4, \hf 7, \hf 8, 12, \hf 9, \hf 6, \hf 3)$ \\
				& $\varpi_3 = \frac{1}{3}( 5, \hf 6, 10, 12, \hf 8, \hf 4)$ & & $\varpi_3 = \hf ( 3, \hf 4, \hf 6, \hf 8, \hf 6, \hf 4, \hf 2) $\\
				& $\varpi_4 = \hf ( 2, \hf 3, \hf 4, \hf 6, \hf 4, \hf 2)$ & & $\varpi_4 = \hf ( 4, \hf 6, \hf 8, 12, \hf 9, \hf 6, \hf 3)$ \\
				& $\varpi_5 = \frac{1}{3}( 4, \hf 6, \hf 8, 12, 10, \hf 5)$ & & $\varpi_5 = \frac{1}{3}( 6, \hf 9, 12, 18, 15, 10, \hf 5)$ \\
				& $\varpi_6 = \frac{1}{3}( 2, \hf 3, \hf 4, \hf 6, \hf 5, \hf 4)$ & & $\varpi_6 = \hf ( 2, \hf 3, \hf 4, \hf 6, \hf 5, \hf 4, \hf 2)$\\
				& & & $\varpi_7 = \frac{1}{2}( 2, \hf 3, \hf 4, \hf 6, \hf 5, \hf 4, \hf 3)$ \\
				\hline
				%----------
				\multirow{8}{*}{$E_8$} & $\varpi_1 = (\hf 4, \hf 5, \hf 7, 10, \hf 8, \hf 6, \hf 4, 2)$ & \multirow{6}{*}{$F_4$} & \\
				& $\varpi_2 = (\hf 5, \hf 8, 10, 15, 12, \hf 9, \hf 6, 3)$ & & $\varpi_1 = (2, 3, 4, 2)$ \\
				& $\varpi_3 = (\hf 7, 10, 14, 20, 16, 12, \hf 8, 4)$ & & $\varpi_2 = (3, 6, 8, 4)$ \\
				& $\varpi_4 = (10, 15, 20, 30, 24, 18, 12, 6)$ & & $\varpi_3 = (2, 4, 6, 3)$ \\
				& $\varpi_5 = (\hf 8, 12, 16, 24, 20, 15, 10, 5)$ & & $\varpi_4 = (1, 2, 3, 2)$ \\
				& $\varpi_6 = (\hf 6, \hf 9, 12, 18, 15, 12, \hf 8, 4)$ &  &  \\
				\cline{3-4}
				& $\varpi_7 = (\hf 4, \hf 6, \hf 8, 12, 10, \hf 8, \hf 6, 3)$ & \multirow{2}{*}{$G_2$}&$\varpi_1 = (2, 1)$ \\
				& $\varpi_8 = (\hf 2, \hf 3, \hf 4, \hf 6, \hf 5, \hf 4, \hf 3, 2)$ & &$\varpi_2 = (3, 2) $  \\
				\hline
			\end{tabular}
			$}
	\end{center}

\end{document}